\newcounter{intro}
\newtheorem{introthm}[intro]{Theorem}
\newtheorem{thm}{Theorem}
\newtheorem{cor}[thm]{Corollary}
\newtheorem{lem}[thm]{Lemma}
\newtheorem{prop}[thm]{Proposition}
\newtheorem{rmk}[thm]{Remark}
\newcommand{\cD}{\mathcal {D}}\newcommand{\cE}{\mathcal{E}} 
 \newcommand{\cH}{\mathcal{H}} 
 \newcommand{\cK}{\mathcal{K}} \newcommand{\cL}{\mathcal{L}}
\newcommand{\cM}{\mathcal{M}}  \newcommand{\cO}{\mathcal{O}} 
\newcommand{\cP}{\mathcal{P}}  \newcommand{\cR}{\mathcal{R}}
\newcommand{\cS}{\mathcal{S}}   
\newcommand{\cV}{\mathcal{V}}
\newcommand{\To}{\longrightarrow}
\newcommand{\nP}{P}
\newcommand{\nA}{A}
\newcommand{\rmP}{\mathsf{P}}
\newcommand{\rmA}{\mathsf{A}}
\newcommand{\rmB}{\mathsf{B}}
\newcommand{\rmu}{\mathsf{u}}
\newcommand{\rmv}{\mathsf{v}}
\newcommand{\rmf}{\mathsf{f}}
\newcommand{\rma}{\mathsf{a}}
\newcommand{\rmb}{\mathsf{b}}
\newcommand{\ds}{\displaystyle}
 \newcommand{\CC}{{\mathbb C}} 
  \newcommand{\NN}{{\mathbb N}}
  \newcommand{\RR}{{\mathbb R}}
  \newcommand{\ZZ}{{\mathbb Z}}
\newcommand{\Mor}{\mathscr{L}}
\newcommand{\WF}{\operatorname{WF}}
\newcommand{\Id}{\operatorname{Id}}
\newcommand{\supp}[1]{{\mathrm{supp}(#1)}}
\newcommand{\pr}[1][]{\mathop{\mathrm{pr}_{#1}}\nolimits}
\newcommand{\Dil}{\mathop{\rho}\nolimits}
\newcommand{\Diff}{\mathop{\mathrm{Diff}}\nolimits}
\newcommand{\Op}{\mathop{\mathrm{Op}}\nolimits}
\newcommand{\op}{\mathop{\mathrm{op}}\nolimits}
\newcommand{\tr}{\mathop{\mathrm{tr}}\nolimits}
\newcommand{\orb}{\mathop{\mathrm{orb}}\nolimits}
\newcommand{\sg}{s_{{}_{\Gamma}}}
\newcommand{\rg}{r_{{}_{\Gamma}}}
\newcommand*{\dt}[1]{\accentset{\mbox{\large\bfseries .}}{#1}}
\newcommand{\mbt}{\raisebox{0.5mm}{\mbox{\large\bfseries .}}}
\newcommand{\dom}[1][]{\mathop{\mathrm{dom}_{#1}}\nolimits}
\begin{document}

\title[Evolution Equations on groupoids]
      {On evolution  equations for Lie groupoids }

 \author{Jean-Marie Lescure}
\address{Universit\'e Paris Est Créteil, CNRS, LAMA, F-94000 CRETEIL, FRANCE}

\email{jean-marie.lescure@u-pec.fr}

\author{St\'ephane Vassout}
\address{Université de Paris, CNRS, IMJ-PRG, F-75205 PARIS CEDEX 13, FRANCE}
\email{stephane.vassout@imj-prg.fr}

\date{\today}

\begin{abstract}
Using the calculus of Fourier integral operators on Lie groupoids developped in \cite{LV1}, we study the fundamental solution  of the evolution equation $ (\frac{\partial}{\partial t} +iP)u =0$ where $P$ is a self adjoint elliptic order one $G$-pseudodifferential operator on the Lie groupoid $G$. Along the way, we continue the study of distributions on Lie groupoids done in \cite{LMV1} by adding the reduced $C^{*}$-algebra of $G$ in the picture and  we investigate the local nature of the regularizing operators of \cite{Vassout2006}.
\end{abstract}

\maketitle

\setcounter{tocdepth}{2}
\tableofcontents  

\section{Introduction}
 
The main motivation of this paper is the construction of an approximate solution to the problem
\begin{equation}\label{intro:Cauchy-PB-evolution}
 \begin{cases}
     (\dfrac{\partial}{\partial t} + iP)u= f \\
       u(0)= g 
 \end{cases}
 \end{equation}
in the framework of a Lie groupoid $G\rightrightarrows M$. This means that $P$ here is a suitable order $1$ pseudodifferential $G$-operator, that $f,g$ live in suitable spaces of distributions and that the approximate solution will be seeked among Fourier integral $G$-operators. The present article can be considered as a continuation of  \cite{LMV1}, where properties of distributions on Lie groupoids, and convolution of them,  are studied in a certain generality, and of  \cite{LV1}, where H\"ormander's notion and calculus of Fourier integral operators on manifolds \cite{Horm-FIO1,Horm-classics} are exported to the framework of Lie groupoids.  We will frequently refer to the results of these papers, and  one of their cornerstones, namely the symplectic groupoid structure of $T^{*}G$ \cite{CDW}: 
$ \sg, \rg : \Gamma = T^{*}G \rightrightarrows A^{*}G$, will be of great importance here again.  

The Cauchy problem \eqref{intro:Cauchy-PB-evolution} has been and can be of course investigated in many situations under many different assumptions. We refer more precisely to \cite[Theorem 29.1.1]{Horm-classics} to illustrate the kind of results that we want to achieve on Lie groupoids. This can be summarized  by the following problem: 

\bigskip $ (\cP)$ \emph{Under an ellipticity assumption on $P$, the fundamental solution of  \eqref{intro:Cauchy-PB-evolution} should have, up to suitable regularizing error terms, an explicit approximation by Fourier integral $G$-operators that describes  in a simple and geometric way how the singularities of the initial data $g$ propagate at time $t$ under the action of the principal symbol of $P$. } 

\bigskip To set the problem on firm foundations, we first study in Section  \ref{sec:generalite-E-itP} existence and unicity conditions for  \eqref{intro:Cauchy-PB-evolution} in the general framework of $C^{*}$-algebras and Hilbertian modules, and we require there that $P$ is an unbounded self-adjoint regular operator on a  $C^{*}$-algebra $A$ \cite{BaajPhD,Baaj1983,WO,JenThom,Vassout2006,skand-cours2015}. Then the fundamental solution of  \eqref{intro:Cauchy-PB-evolution} denoted by $E(t)=e^{-itP}$ is obtained by continuous functional calculus, which yields the existence of solutions, while easy computations identical to those for Hilbert spaces show the uniqueness. We get in particular:
\begin{introthm}\label{intro:thm1} Let $A$ be a $C^{*}$-algebra, let $H$ be a Hilbertian $A$-module and $P$ be a selfadjoint regular operator on $A$. Let $H^{\infty}=\cap_{k}\dom P^{k}$. Then for any  $f\in C^\infty(\RR, H^\infty)$ and $g\in H^\infty $, the Cauchy problem  \eqref{intro:Cauchy-PB-evolution}
has a unique solution in $ C^{\infty}(\RR,H^{\infty})$, given by  
\begin{equation}\label{intro:sol-cauchy-complete}
  u(t) = e^{-itP}g + \int_0^t e^{i(s-t)P}f(s)ds.
\end{equation}
\end{introthm}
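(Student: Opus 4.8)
The plan is to derive everything from the functional calculus for unbounded self-adjoint regular operators on Hilbert modules. Under the hypotheses, $P$ admits a continuous functional calculus which, applied to the bounded continuous functions $\lambda\mapsto e^{-it\lambda}$, produces the one-parameter group $E(t)=e^{-itP}$ of unitaries in $\cL(H)$ (see \cite{WO,Vassout2006} and the references cited above). First I would collect its elementary properties: $E(s+t)=E(s)E(t)$, $E(0)=\Id$, $E(t)^{*}=E(-t)$, $t\mapsto E(t)x$ is norm-continuous for every $x\in H$, and $E(t)$ commutes with $\phi(P)$ for all $\phi\in C_{b}(\RR)$, in particular with the resolvents $(P\pm i)^{-1}$. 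Since $\dom P^{k}$ is the range of $(P+i)^{-k}$, it follows that $E(t)$ preserves each $\dom P^{k}$ and commutes with $P^{k}$ there, hence $E(t)H^{\infty}=H^{\infty}$.

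The one genuinely analytic point, and the main obstacle, is the infinitesimal behaviour of $E(t)$ in the Fréchet topology of $H^{\infty}$ — a Hilbert-module version of Stone's theorem together with the regularity of the smooth vectors. For $x\in\dom P^{2}$ I would estimate $\bigl\|\tfrac1h(E(h)-\Id)x+iPx\bigr\|$ via the functional calculus: with $\eta_{h}(\lambda)=\tfrac1h(e^{-ih\lambda}-1)+i\lambda$ and $\zeta_{h}(\lambda)=\eta_{h}(\lambda)/(1+\lambda^{2})$, the elementary bound $|e^{-ih\lambda}-1+ih\lambda|\le \tfrac12 h^{2}\lambda^{2}$ gives $\|\zeta_{h}\|_{\infty}\le |h|/2$, so that $\tfrac1h(E(h)-\Id)x+iPx=\zeta_{h}(P)(1+P^{2})x$ has norm $\le \tfrac{|h|}{2}\|(1+P^{2})x\|\to 0$. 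Because $E(t)$ commutes with all powers of $P$, applying this to $P^{k}x$ and using the group law and norm-continuity upgrades it to: for every $x\in H^{\infty}$ the map $t\mapsto E(t)x$ is differentiable from $\RR$ to $H^{\infty}$ with $\tfrac{d}{dt}E(t)x=-iPE(t)x=E(t)(-iPx)$, and, iterating the order of differentiation, it is $C^{\infty}$ from $\RR$ to $H^{\infty}$. The same argument shows $(t,x)\mapsto E(t)x$ is $C^{1}$ from $\RR\times H^{\infty}$ to $H^{\infty}$.

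With this in hand, existence is the classical Duhamel computation. For $u_{0}(t)=E(t)g$ the previous step gives $u_{0}\in C^{\infty}(\RR,H^{\infty})$ with $u_{0}(0)=g$ and $(\partial_{t}+iP)u_{0}=0$. For the integral term I would write $v(t)=E(t)w(t)$ with $w(t)=\int_{0}^{t}E(-s)f(s)\,ds$; since $s\mapsto E(-s)f(s)$ is continuous into the Fréchet space $H^{\infty}$, $w\in C^{1}(\RR,H^{\infty})$, $w(0)=0$, $w'(t)=E(-t)f(t)$, and the chain/product rule (legitimate by the $C^{1}$ statement above) yields $v(0)=0$ and $v'(t)=-iPE(t)w(t)+E(t)E(-t)f(t)=-iPv(t)+f(t)$, i.e. $(\partial_{t}+iP)v=f$. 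The identity $v'=-iPv+f$ then bootstraps $v$ to $C^{\infty}(\RR,H^{\infty})$, since $P$ acts continuously on $H^{\infty}$ and $f\in C^{\infty}(\RR,H^{\infty})$. Thus $u=u_{0}+v$ is the function in \eqref{intro:sol-cauchy-complete} and solves \eqref{intro:Cauchy-PB-evolution}.

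For uniqueness, let $u\in C^{\infty}(\RR,H^{\infty})$ solve the homogeneous problem with $u(0)=0$, and put $\phi(t)=E(-t)u(t)\in H^{\infty}$. Then $\phi'(t)=iPE(-t)u(t)+E(-t)u'(t)=iPE(-t)u(t)-iPE(-t)u(t)=0$, so $\phi$ is constant, equal to $\phi(0)=0$; applying $E(t)$ gives $u\equiv 0$. By linearity the general uniqueness claim reduces to this case, which completes the proof.
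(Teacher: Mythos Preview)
Your proof is correct and follows the same overall architecture as the paper: set up the continuous functional calculus for $e^{-itP}$, prove strong differentiability of $t\mapsto e^{-itP}x$ on smooth vectors, and then check existence via Duhamel. There are two places where you take a slightly different route. For differentiability, the paper appeals to uniform convergence of $\tfrac{1}{t}(e^{-it\lambda}-1)$ on compact subsets and an external strong-convergence lemma, obtaining the derivative already for $x\in\dom P$; you use instead the explicit quadratic Taylor bound $|e^{-ih\lambda}-1+ih\lambda|\le\tfrac12 h^{2}\lambda^{2}$ combined with the resolvent $(1+P^{2})^{-1}$, which needs $x\in\dom P^{2}$ but is entirely self-contained and quantitative. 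For uniqueness, the paper uses the energy method---pairing the equation with $u$, exploiting self-adjointness of $P$ to get $\partial_{t}\langle u,u\rangle=0$, hence $\|u(t)\|=0$---while you use the propagator trick $\phi(t)=E(-t)u(t)$, $\phi'=0$. Both uniqueness arguments are classical; yours avoids manipulating the $A$-valued inner product directly, at the price of relying on the group $E(t)$ already constructed.
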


This preliminary result allows us to speak about the fundamental solution of  \eqref{intro:Cauchy-PB-evolution}  in the case  of a Lie groupoid $G$ with compact units space $M$ and of a first order elliptic symmetric and compactly supported, polyhomogeneous pseudodifferential $G$-operator $P$. Indeed, we then know by \cite{Vassout2006} that (the closure of) $P$ is  selfadjoint and regular on, for instance, the reduced $C^{*}$-algebra of $G$, denoted by $C^{*}_{r}(G)$. In particular, the theorem above applies and the task to find a nice approximation to $E(t)$ among Fourier integral $G$-operators is meaningful. Note that, because of \eqref{intro:sol-cauchy-complete}, the error term will automatically belong to the space  {$\cH^{\infty} = H^{\infty}\cap (H^{\infty})^{*}$}. Our answer to the problem  $(\cP)$  is the main result of the paper:
\begin{introthm}\label{intro:main-thm}
There exists a   $C^\infty$ family $\Lambda_t\subset T^*G$ of $G$-relations and a $C^\infty$ family of compactly supported Fourier integral $G$-operators $U(t)\in I^{(n^{(1)} - n^{(0)})/4}(G,\Lambda_t;\Omega^{1/2})$ such that :
\begin{equation}\label{intro:parametrix}
 (\frac{\partial}{\partial t} + iP) U(t) \in C^{\infty}_c(G, \Omega^{1/2}),
\end{equation}
and for any $t$, we have:  $ \ds E(t) - U(t) \in  {\cH^{\infty} }$.  
\end{introthm}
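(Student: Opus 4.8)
The plan is to construct $U(t)$ by the standard geometric-optics / Hamilton--Jacobi ansatz, transcribed into the groupoid setting using the Fourier integral $G$-operator calculus of \cite{LV1}. First I would identify the relevant geometry: the principal symbol $p$ of $P$ is a smooth function on $A^{*}G\setminus 0$, homogeneous of degree one, and by ellipticity it is nowhere vanishing there. Pulled back via $\sg$ (or rather, viewed through the symplectic groupoid structure of $\Gamma = T^{*}G \rightrightarrows A^{*}G$), $p$ generates a Hamiltonian flow $\exp(tH_{p})$ on $T^{*}G\setminus 0$; the family of $G$-relations is then $\Lambda_{t} = $ graph of this flow, i.e. the image of the diagonal $\{(\xi,\xi)\}$ under $\mathrm{id}\times\exp(tH_{p})$, which is a smooth family of Lagrangian (conic) $G$-relations with $\Lambda_{0}$ the identity relation. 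One should check that $\Lambda_{t}$ is indeed a $G$-relation in the sense of \cite{LV1} (closedness of the source/range maps, the support condition needed for compactness of the associated operators), which will require a cut-off near the zero section and possibly a properness argument for the flow — this is where the compactness of $M$ and the compact support of $P$ enter.

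Next I would build $U(t)$ as an order $(n^{(1)}-n^{(0)})/4$ Fourier integral $G$-operator associated with $\Lambda_{t}$, with full symbol determined recursively. The leading term is fixed by requiring $U(0)=\Id$ (so that $E(0)-U(0)=0$, consistent with $g\in H^{\infty}$) and by solving the \emph{transport equation} along the Hamiltonian flow: applying $\frac{\partial}{\partial t}+iP$ to $U(t)$ and using the symbol calculus (composition of symbols, the sub-principal symbol contribution, and the Lie derivative along $H_{p}$) yields, at the top order, a first-order linear ODE for the principal symbol of $U(t)$ which is solved by transporting the initial symbol $1$ along the bicharacteristics and multiplying by the appropriate half-density / Maslov factor. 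This kills the leading-order term of $(\frac{\partial}{\partial t}+iP)U(t)$. One then iterates: at each step the remainder is a Fourier integral $G$-operator of one lower order associated with the same $\Lambda_{t}$, and one subtracts a correction term whose symbol solves an inhomogeneous transport equation with the previous remainder as source. An asymptotic summation (Borel-type) over the resulting symbols, carried out uniformly in $t$ on compact time intervals, produces $U(t)$ with $(\frac{\partial}{\partial t}+iP)U(t)$ of order $-\infty$ and compactly supported, hence in $C^{\infty}_{c}(G,\Omega^{1/2})$, giving \eqref{intro:parametrix}.

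Finally, to pass from the parametrix identity to the statement $E(t)-U(t)\in\cH^{\infty}$, I would set $R(t) := (\frac{\partial}{\partial t}+iP)U(t) \in C^{\infty}_{c}(G,\Omega^{1/2})$ and note that $U(0)-\Id$ is smoothing as well. Then $V(t) := E(t)-U(t)$ satisfies the inhomogeneous Cauchy problem $(\frac{\partial}{\partial t}+iP)V(t) = -R(t)$ with smooth compactly supported initial data, so by Theorem \ref{intro:thm1} applied in $A=C^{*}_{r}(G)$ (using that $P$ is self-adjoint and regular there by \cite{Vassout2006}), $V(t)$ is given by Duhamel's formula \eqref{intro:sol-cauchy-complete} with $f=-R(s)\in H^{\infty}$ and $g = \Id - U(0)\in H^{\infty}$; since $e^{-isP}$ preserves $H^{\infty}$, we get $V(t)\in C^{\infty}(\RR, H^{\infty})$, and a symmetric argument (or the self-adjointness of $P$, using that $R(t)$ and the initial error lie in $\cH^{\infty}$) gives $V(t)\in\cH^{\infty}$ as claimed.

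The main obstacle I anticipate is not the formal construction of the symbols — that is the classical Hörmander argument of \cite[Ch.~25, 29]{Horm-classics} — but rather controlling the \emph{global} geometry and support properties on the groupoid: ensuring the Hamiltonian flow of $p$ is defined for all $t$ and stays within a region where $\Lambda_{t}$ remains an \emph{admissible, compactly supported} $G$-relation (so the operators $U(t)$ lie in the calculus of \cite{LV1} and the composition with $P$ is legitimate), and verifying the $C^{\infty}$-dependence on $t$ of the whole family in the relevant operator topology. Handling the zero section, the conic structure, and the half-density/Maslov bundle coherently in the groupoid setting is the delicate technical point; everything else reduces to invoking the calculus from \cite{LV1} and the $C^{*}$-algebraic existence/uniqueness from Theorem \ref{intro:thm1}.
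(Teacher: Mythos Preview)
Your proposal is correct and follows essentially the same route as the paper: the $G$-relations are $\Lambda_t = \chi_t(A^*G\setminus 0)$ for $\chi$ the Hamiltonian flow of $p^0 = p_0^0\circ\rg$ (note: $\rg$, not $\sg$), the parametrix $U\sim\sum U^j$ is built by solving the transport equations $(\partial_t + \cL_{p^0} + i\sigma^{1s}(P))u^j = -f^j$ iteratively with Borel summation, and Duhamel's formula \eqref{sol-cauchy-complete} yields $E(t)-U(t)\in C^\infty(\RR,\cH^\infty)$. The paper isolates the symbol computation $\sigma_{\pr}(QA) = -i\cL_{q^0}a + q^{1s}a$ when $q^0|_\Lambda = 0$ as a separate key lemma (Theorem~\ref{prop:adapt-25-2-4}), and devotes some care to the fact that $\partial_t + iP$ is neither a $\widetilde{G}$- nor a $G_+$-PDO (handled either by a direct local computation or by a microlocal cutoff $P = P_1 + P_2$ making $\partial_t + iP_1$ a genuine $G_+$-PDO), but otherwise the architecture is exactly what you outline.
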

Let us now explain in some details the ingredients and the intermediate results, some of them being interesting on their owns,  required in the proof of the main theorem. 

First of all, Theorem \ref{intro:main-thm} immediately rises the preliminary question of the regularity of elements in  $ {\cH^{\infty} }$. Strictly speaking, elements of $ {\cH^{\infty} }$ live in a noncommutative $C^{*}$-algebra so dealing with their  local properties  makes a priori no sense. We manage on the one hand to prove that elements of the reduced $C^{*}$-algebra $C^{*}_{r}(G)$ of a Lie groupoid $G$ are distributions on $G$, in a canonical way, and on the other hand, to precise the regularity of elements in $ {\cH^{\infty} }$.  These intermediate tasks are the subject of Sections  \ref{sec:dist-test-facto} and \ref{sec:Sobolev} and the details can be summarized as follows.

The space of distributions we deal with, denoted by $\cD'(G)$, is  the one of  distributions on $G$ with values in the density bundle 
$ \Omega^{1/2} := \Omega^{1/2}(r^{*}AG)\otimes \Omega^{1/2}(s^{*}AG) $  
and thus the space of test functions we use, denoted by $\cD(G)$,  is the one of  compactly supported $C^{\infty}$ sections of the density bundle 
$ \Omega^{1/2}_{0} := \Omega^{-1/2} \otimes \Omega^{1}_{G}$. 
Thus $\cD'(G)= (\cD(G))'$ and the choice of $\Omega^{1/2}$ is relevant because $C^{\infty}_{c}(G) := C^{\infty}_{c}(G,\Omega^{1/2})\subset \cD'(G)$ is in a \emph{canonical} way an involutive algebra, whose a certain completion is precisely the algebra $C^{*}_{r}(G)$. Also, we have proved in \cite{LMV1} that the product $\star$ in $C^{\infty}_{c}(G)$ (called the convolution product for obvious reasons) widely generalizes, by continuity, to distributions in $\cD'(G)$. For instance the space $\cE'_{r,s}(G)$ of compactly supported distributions on $G$ whose pushforwards by the source and range maps are $C^{\infty}$ on $M$ (transversal distributions) forms a unital involutive algebra for the convolution product. Another justification for these choices of densities comes from the present work, indeed we prove that transversal distributions also act by convolution on $\cD(G)$ in a nice way, and that weak factorizations in the sense of \cite{Dix-Mal78} are available:

\begin{introthm}\label{intro:thm2} Let $\langle \cdot, \cdot \rangle$ denote the pairing $\cD'(G)\times \cD(G)\to \CC$ and $\iota$ the inversion of the groupoid. 
\begin{enumerate}
\item $ \ds \forall (u,\omega)\in \cD'_{r,s}(G)\times \cD(G), \quad \langle u, \omega \rangle =   \langle \iota^{*}u, \iota^{*}\omega \rangle =  \langle \delta_{M},  \iota^{*}u \star \omega \rangle$. 
\item The space $\cD(G)$ is a bimodule over $\cE'_{r,s}(G)$ and: 
\begin{align*}
 \forall u, v \in \cE'_{r,s}(G), \forall \omega\in \cD(G),\quad \langle u \star v, \omega \rangle  =  \langle  v, \iota^{*}u \star \omega \rangle =  \langle  u ,   \omega \star \iota^{*}v \rangle 
 \end{align*}
\item Let  $\omega \in \cD(G)$. For any  neighborhood $V$ of $M$ into $G$ ,  on can write $ \omega$ as a finite sum of elements 
 $\xi \star \chi$
where $\xi \in C^{\infty}_{c}(G)$,   $\supp{\xi}\subset V$ and $\chi\in \cD(G)$, $\supp{\chi}\subset \supp \omega$.
\end{enumerate}
\end{introthm}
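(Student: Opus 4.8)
For part (1), the first equality is a change of variables along the inversion $\iota$: since $\iota$ is an involutive diffeomorphism of $G$ exchanging the source and range maps, $\iota^{*}$ interchanges the two half-density factors of $\Omega^{1/2}$ (and likewise of $\Omega^{1/2}_{0}$) and fixes the $\Omega^{1}_{G}$ factor, so $\iota^{*}$ is a topological involution of both $\cD(G)$ and $\cD'(G)$ preserving $\cD'_{r,s}(G)$; as $u\cdot\omega$ is a genuine $1$-density on $G$, the equality $\langle u,\omega\rangle=\langle\iota^{*}u,\iota^{*}\omega\rangle$ holds on $C^{\infty}_{c}(G)$ and then, by density and duality, on all of $\cD'(G)$. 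For the second equality, which given the first amounts to $\langle u,\omega\rangle=\langle\delta_{M},\iota^{*}u\star\omega\rangle$, I would first take $u\in C^{\infty}_{c}(G)$ and unwind the definitions: $\delta_{M}$ pairs a transversal section with the integral over $M$ of its restriction, which under our conventions is a genuine $1$-density on $M$; writing $\iota^{*}u\star\omega$ along the units and substituting $\beta=\alpha^{-1}$ in the fibre integral transforms $\langle\delta_{M},\iota^{*}u\star\omega\rangle$ into $\int_{M}(\int_{s^{-1}(x)}u\,\omega)\,dx=\int_{G}u\cdot\omega=\langle u,\omega\rangle$ — precisely the bookkeeping that the bundles $\Omega^{1/2}$, $\Omega^{1/2}_{0}$ were designed to make transparent. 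I would then pass to general $u\in\cD'_{r,s}(G)$ using density of $C^{\infty}_{c}(G)$ together with the continuity of $u\mapsto\iota^{*}u\star\omega$ from $\cD'_{r,s}(G)$ to $\cD(G)$, which follows from the continuity properties of the convolution established in \cite{LMV1}.

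For part (2) there are two tasks. The first is the bimodule structure itself, namely $\cE'_{r,s}(G)\star\cD(G)\subset\cD(G)$ and $\cD(G)\star\cE'_{r,s}(G)\subset\cD(G)$: I would prove this by checking that the convolution induces a well-defined pairing of density bundles $\Omega^{1/2}\otimes\Omega^{1/2}_{0}\to\Omega^{1/2}_{0}$, and then that the convolution of a compactly supported transversal distribution with a compactly supported smooth $\Omega^{1/2}_{0}$-section is smooth — a regularity statement of the same nature as those in \cite{LMV1}, obtained from the microlocal transversality to the fibres of distributions in $\cE'_{r,s}(G)$ together with a pushforward argument — compact support of the result being immediate from $\supp{u\star\omega}\subset\supp{u}\cdot\supp{\omega}$. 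Granting this, the three displayed identities become formal: using that $\iota^{*}$ is an anti-automorphism of the convolution algebra, $\iota^{*}(u\star v)=\iota^{*}v\star\iota^{*}u$, associativity of $\star$, and part (1) applied twice, one gets $\langle u\star v,\omega\rangle=\langle\delta_{M},\iota^{*}v\star(\iota^{*}u\star\omega)\rangle=\langle v,\iota^{*}u\star\omega\rangle$, and $\langle u\star v,\omega\rangle=\langle u,\omega\star\iota^{*}v\rangle$ follows by the same manipulation after conjugating everything by $\iota^{*}$ via the first equality of (1).

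Part (3) — the Dixmier–Malliavin-type weak factorization — is the only genuinely analytic point and where I expect the real difficulty. The plan is to reduce it to the Euclidean case. Since $\supp{\omega}$ is compact I would pick a finite partition of unity $\sum_{i}\theta_{i}\equiv 1$ near $\supp{\omega}$ with each $\theta_{i}$ supported in a small chart $W_{i}$; writing $\omega=\sum_{i}\theta_{i}\omega$ it is enough to factor each $\theta_{i}\omega$ with the two factors supported in $V$ and in $W_{i}\cap\supp{\omega}$ respectively, and then to sum. In a sufficiently small $W_{i}$ I would choose coordinates adapted to the submersion $r$ and to the unit embedding along $r(W_{i})\subset M$, putting the multiplication near the units in a normal form in which the operator $\chi\mapsto\xi\star\chi$, for $\xi$ supported near $M$, becomes a convolution-type operator in the source-fibre variable $\RR^{n^{(1)}-n^{(0)}}$ depending smoothly on the remaining (base) parameters, whose kernel is supported arbitrarily near the origin when $V$ is small. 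Applying the parametrized, support-controlled Dixmier–Malliavin factorization theorem (cf. \cite{Dix-Mal78}) — every compactly supported smooth function being a finite sum $\sum_{j}g_{j}\ast h_{j}$ with $\supp{g_{j}}$ inside a prescribed neighbourhood of $0$ and $\supp{h_{j}}\subset\supp{\theta_{i}\omega}$ — then transferring back through the chart, multiplying the $g_{j}$-factors by a fixed cutoff that is $\equiv 1$ near $r(\supp{\omega})$ and supported in $V$, and summing over $i$ and $j$, yields the desired finite sum for $\omega$. The main obstacle is exactly this reduction: producing the normal form in which left-convolution by near-unit functions genuinely becomes a parametrized Euclidean convolution — the integration over the intermediate base point in the groupoid convolution has to be absorbed into the parameter dependence — and checking that the Dixmier–Malliavin estimates are uniform in those parameters, so that the reassembled factors are globally smooth, compactly supported, and supported where required.
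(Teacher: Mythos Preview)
Your treatment of (1) and (2) is essentially the paper's: the smooth case by Fubini and the change of variables $\gamma\mapsto\gamma^{-1}$, then density; the bimodule structure is deferred to \cite{Androulidakis-Skandalis,LMV1} and the displayed identities are formal consequences of (1) together with $\iota^{*}(u\star v)=\iota^{*}v\star\iota^{*}u$.

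For (3) the paper follows a different and more intrinsic route than yours. Instead of reducing to Euclidean convolution in charts, it picks sections $X_{1},\dots,X_{\ell}$ generating the $C^{\infty}(M)$-module $\Gamma(AG)$ and, for each $j$, uses the flow of the right-invariant field $\widetilde{X_{j}}$ to produce a groupoid homomorphism $\varphi_{j}:\RR\ltimes_{\psi_{j}}M\to G$ over $M$. The one-dimensional Dixmier--Malliavin lemma is then applied to the $\RR$-action $\omega\mapsto\varphi_{j!}(a)\star\omega$ (this is an honest $\RR$-action because $\varphi_{j}$ is a homomorphism), iteratively for $j=1,\dots,\ell$; this writes $\omega$ as a finite sum of terms $\big(\varphi_{1!}k_{1}\star\cdots\star\varphi_{\ell!}k_{\ell}\big)\star\chi$ with $k_{j}\in C^{\infty}_{c}(]-\varepsilon,\varepsilon[)$ and $\supp\chi\subset\supp\omega$. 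Since pushforward along a homomorphism respects convolution, the left factor equals $\varphi_{!}(k_{1}\otimes\cdots\otimes k_{\ell})$ for the combined map $\varphi:\RR^{\ell}\times M\to G$, and the spanning hypothesis on the $X_{j}$ makes $\varphi$ a submersion near $t=0$, so this pushforward lies in $C^{\infty}_{c}(G)$ with support inside any prescribed $V$ for $\varepsilon$ small. This sidesteps exactly the obstacle you single out: no normal form for the multiplication is needed, because convolution by $\varphi_{j!}(a)$ is \emph{exactly}, not approximately, an $\RR$-convolution along the flow. Your chart-based approach is plausible in spirit, but the key step --- turning left-convolution by near-unit functions into a genuine parametrized Euclidean convolution --- is only named, not carried out; since the groupoid product is nonlinear in the fibre variable, what you would actually be invoking is a Dixmier--Malliavin-type statement for smooth families of integral operators rather than for convolutions, which is a real extension. (A minor slip: the source-fibre dimension is $n^{(1)}$, not $n^{(1)}-n^{(0)}$.)
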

This material allows us in Section  \ref{sec:Sobolev} to answer to the question about the local nature of elements in ${\cH^{\infty} }$, and along the way, that of elements in $C^{*}_{r}(G)$:

\begin{introthm}\ 
\begin{enumerate}
\item There is a continuous embedding $\ds C^{*}_{r}(G) \hookrightarrow \cD'(G)$.
 This embedding extends the  pairing $ \langle \cdot , \cdot \rangle$ between 
$ C^{\infty}_{c}(G)$ and $ \cD(G)$. 
\item The  inclusions  $\ds C^{\infty,0}_{\orb}(G)\cap C_c(G) \subset   {\cH^{\infty} } \subset  C^{\infty,0}_{\orb}(G)\cap C^{*}_{r}(G)$ hold true. \\
Here $ C^{\infty,0}_{\orb}$ refers to the space of continuous functions on $G$ that are $C^{\infty}$ on the subgroupoids $G_{\cO}=s^{-1}(\cO)$, as well as all their derivatives along the fibers of $s$ and $r$, for every orbits $\cO = r(s^{-1}(\{x\}))$ in $M$.
\end{enumerate}
\end{introthm}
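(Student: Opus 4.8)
The argument splits into the embedding $C^{*}_{r}(G)\hookrightarrow\cD'(G)$ and the two inclusions for ${\cH^{\infty}}$, the former providing the distributional framework in which the latter are phrased. For the embedding, fix $\omega\in\cD(G)$; the plan is to bound the linear form $f\mapsto\langle f,\omega\rangle$, defined a priori on $C^{\infty}_{c}(G)=C^{\infty}_{c}(G,\Omega^{1/2})$, by $\|f\|_{r}$. Using Theorem \ref{intro:thm2}(1) one rewrites $\langle f,\omega\rangle=\langle\iota^{*}f,\iota^{*}\omega\rangle=\langle\delta_{M},f\star\iota^{*}\omega\rangle$. Now $\langle\delta_{M},\cdot\rangle$ is, canonically (by the symmetry of the $\Omega^{1/2}$-formalism), a trace on the convolution algebra, so $\langle f,\omega\rangle$ is the value of the canonical (semifinite) trace $\tau$ of the von Neumann algebra of $G$ on $\lambda(f)$ composed with the operator defined by $\iota^{*}\omega$, where $\lambda=\int^{\oplus}_{M}\lambda_{x}$ is the regular representation on the fields $L^{2}(G_{x})$. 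Since $\iota^{*}\omega$ is smooth and compactly supported, this operator is $\tau$-trace-class, with $\tau$-trace-norm dominated by a fixed $C^{\infty}$-seminorm $C_{\omega}$ of $\omega$ (uniform smoothing estimates on the fibres $G_{x}$, uniform because $\omega$ is compactly supported). Hence $|\langle f,\omega\rangle|\le\|f\|_{r}\,C_{\omega}$, so the form extends to $C^{*}_{r}(G)$; letting $\omega$ vary yields a linear map $C^{*}_{r}(G)\to\cD'(G)$ that, by this estimate, is continuous and that restricts to the inclusion $C^{\infty}_{c}(G)\hookrightarrow\cD'(G)$. Injectivity follows from faithfulness of $\bigoplus_{x}\lambda_{x}$: the trace identity persists on $C^{*}_{r}(G)$ by continuity, and if $\langle a,\omega\rangle=0$ for all $\omega$ then, localising $\omega$ and using that the $\lambda_{x}(h)$, $h\in C^{\infty}_{c}(G)$, are weak-$*$ dense with faithful trace, $\lambda_{x}(a)=0$ for every $x$, i.e. $a=0$.

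\emph{Left inclusion of the second statement.} Let $a\in C^{\infty,0}_{\orb}(G)\cap C_{c}(G)$. Being continuous and compactly supported, $a$ is bounded for the $L^{1}$-type norms dominating $\|\cdot\|_{r}$, so $a\in C^{*}_{r}(G)$. To show $a\in\dom\bar P^{k}$ for every $k$, mollify: take a bounded approximate unit $(\varphi_{n})\subset C^{\infty}_{c}(G)$ and put $a_{n}=a\star\varphi_{n}\in C^{\infty}_{c}(G)\subset H^{\infty}$, so $a_{n}\to a$ in $C^{*}_{r}(G)$. Because $P$ is a compactly supported $G$-operator it commutes with right convolution and acts orbit-wise through the classical elliptic pseudodifferential operator $P_{x}$ on $G_{x}$; since $a$ is $C^{\infty}$ along the fibres with all fibre-derivatives continuous on $G$, the orbit-wise image $Pa$ is well defined and again lies in $C^{\infty,0}_{\orb}(G)\cap C_{c}(G)$ (fibrewise smoothness is preserved by $P_{x}$, and continuity on $G$ holds because the symbol of $P$ depends smoothly on the base point), and $\bar P a_{n}=(Pa)\star\varphi_{n}\to Pa$ in $C^{*}_{r}(G)$. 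Closedness of $\bar P$ gives $a\in\dom\bar P$ and $\bar P a=Pa$; iterating (using that $P$ preserves $C^{\infty,0}_{\orb}(G)\cap C_{c}(G)$) yields $a\in\dom\bar P^{k}$ with $\bar P^{k}a=P^{k}a$, hence $a\in H^{\infty}$. The groupoid inversion preserves $C^{\infty,0}_{\orb}(G)\cap C_{c}(G)$ (it exchanges $s$- and $r$-fibres, under which orbit-smoothness is symmetric), so $a^{*}\in H^{\infty}$ as well and $a\in{\cH^{\infty}}$.

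\emph{Right inclusion.} The containment ${\cH^{\infty}}\subset C^{*}_{r}(G)$ is immediate; only the regularity on $G$ needs proof. Let $a\in{\cH^{\infty}}$; by the embedding, $a$ and all $\bar P^{k}a$, $\bar P^{k}a^{*}$ are distributions on $G$. As $a\in\dom\bar P^{2N}$ for every $N$, write $a=(\Id+\bar P^{2})^{-N}b_{N}$ with $b_{N}=(\Id+\bar P^{2})^{N}a\in C^{*}_{r}(G)$. By ellipticity of $P$ and the calculus of \cite{LV1} (together with the parametrix/regularizing constructions of \cite{Vassout2006}), $(\Id+P^{2})^{-N}$ is, up to a highly regularizing remainder, a $G$-pseudodifferential operator of order $-2N$; its reduced kernel $k_{N}$ is orbit-smooth and, for $N$ large, of class $C^{j(N)}$ on $G$ with $j(N)\to\infty$. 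Then $a=k_{N}\star b_{N}$, and one shows — again by reducing the convolution to a trace/matrix-coefficient estimate, rather than by a naive pointwise bound — that $k_{N}\star b$, for $b\in C^{*}_{r}(G)$, is continuous on $G$ and $C^{j'(N)}$ along the $s$-fibres with $j'(N)\to\infty$; letting $N\to\infty$ gives that $a$ is $C^{\infty}$ along every $G_{x}$, with all $s$-fibre-derivatives continuous on $G$. Applying this to $a^{*}$ and transporting by the inversion controls the $r$-fibre-derivatives too, so $a\in C^{\infty,0}_{\orb}(G)$.

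\emph{Main obstacle.} The step I expect to cost the most is this last passage from distributional to pointwise regularity. The difficulty is structural: the reduced norm is an operator norm, whereas the pointwise values of a half-density and its $L^{2}$-norms along the fibres $G_{x}$ are \emph{not} controlled by it, so every estimate — beginning with $|\langle f,\omega\rangle|\le C_{\omega}\|f\|_{r}$ above — must be routed through a trace of the von Neumann algebra of $G$; the final step additionally needs the groupoid pseudodifferential calculus to know that $k_{N}$ is simultaneously very regular and orbit-smooth, and a quantitative ``Sobolev-module'' embedding controlling the regularity of $k_{N}\star b$ by $\|b\|_{r}$ and the order. Extracting genuine continuity on $G$ across orbits (the superscript $0$) is the most delicate point of all.
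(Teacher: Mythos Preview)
Your overall strategy --- bound $f\mapsto\langle f,\omega\rangle$ by $\|f\|_r$ using operator-algebraic ideas --- is the right one, but the crucial step is asserted rather than proved. You claim that the operator attached to $\iota^*\omega$ is $\tau$-trace-class for ``the canonical (semifinite) trace of the von Neumann algebra of $G$'', with trace norm controlled by a seminorm of $\omega$, citing only ``uniform smoothing estimates on the fibres''. This is exactly the hard point. Even for a Lie group (unit space a point), the statement that $\lambda(\phi)$ is Plancherel-trace-class for $\phi\in C^\infty_c$ is not a soft smoothing estimate: its standard proof \emph{is} Dixmier--Malliavin, writing $\phi=\sum \xi_j\star\chi_j$ so that $\lambda(\phi)$ becomes a finite sum of products of Hilbert--Schmidt operators. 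So your argument, as written, presupposes precisely the ingredient it avoids naming.

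The paper makes this explicit. It first proves a groupoid version of Dixmier--Malliavin (Theorem~\ref{intro:thm2}(3)): every $\omega\in\cD(G)$ factors as a finite sum $\sum_j\xi_j\star\chi_j$ with $\xi_j\in C^\infty_c(G)$ and $\chi_j\in\cD(G)$. Then for each summand one computes directly
\[
\langle v,\xi\star\chi\rangle=\int_G(\iota^*v\star\xi)\,\iota^*\chi
=\int_M\Big(\int_{G_x}(\iota^*v\star\xi')\,\iota^*\chi'\Big)d\mu_0,
\]
and a plain Cauchy--Schwarz on each $L^2(G_x)$ together with the definition of $\|\cdot\|_r$ gives $|\langle v,\xi\star\chi\rangle|\le c\,\|v\|_{C^*_r(G)}$. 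No trace on a von Neumann algebra is invoked; the factorization does all the work. If you want to keep your trace formulation, you should acknowledge that the trace-class step is where the weak factorization enters.

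\textbf{Part (2).} Your left inclusion is fine and matches the paper's (terse) argument: $C^{\infty,0}_{\orb}(G)\cap C_c(G)\subset C^*_r(G)$ is stable under left and right convolution by $P$, hence lies in $\cH^\infty$; your mollification makes this precise.

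For the right inclusion your route differs genuinely from the paper's. You write $a=(1+\bar P^2)^{-N}b_N$ and try to extract regularity of $k_N\star b_N$ from kernel estimates on $k_N$; you yourself flag the passage to continuity on $G$ as the delicate point, and indeed bounding $k_N\star b$ pointwise by $\|b\|_r$ is exactly the kind of estimate the reduced norm does not give for free. The paper avoids this by working on the \emph{represented} Sobolev scale $H^k_s$ (over $C(M)$, via the left regular representation): a short lemma shows $H^\infty_s\subset C^{\infty,0}_s(G,\Omega^{1/2}_s)$ (fibrewise Garding) and $\cE'_{s,0}\subset H^{-\infty}_s$. Since $u\in\cH^\infty$ sends $H^{-k}_s\to H^k_s$ for all $k$, it sends $\cE'_{s,0}\to C^{\infty,0}_s$; hence each Schwartz kernel $\kappa_x(\gamma_1,\gamma_2)=u(\gamma_1\gamma_2^{-1})$ is $C^\infty$ on $G_x\times G_x$, and right invariance plus a local section of $r:G_x\to\cO$ propagates this to smoothness on $G_{\cO}$. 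This bypasses entirely the need to control $k_N\star b$ for generic $b\in C^*_r(G)$, which is where your sketch stalls.
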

Next, we explain how the principal symbol $p$ of $P$ gives rise to the  family of Lagrangian submanifolds $\Lambda_t$, $t\in\RR$, that will describe the propagation of singularities as expected in Problem $ (\cP)$. By definition, $P\in I^{ 1+(n^{(1)} - n^{(0)})/4} (G,M,\Omega^{1/2})$ is a polyhomogeneous conormal distribution, thus posseses a homogeneous principal symbol $p^{0}_{0}\in C^\infty(A^*G\setminus 0)$. If one considers the family $(P_x)_{x\in M}$ of ordinary pseudodifferential operators in the fibers of $s$ and collects their principal symbols into a homogeneous function  $p^{0}\in C^\infty(T_{s}^*G\setminus 0)$, $T^{*}_{s}G = (\ker ds)^{*}$, that will be called the \emph{principal $G$-symbol of} $P$. After lifting $p^{0}$ to a function on $T^*G\setminus \ker \rg$, one gets the following identity:
\begin{equation*}
  \forall (\gamma,\xi) \in T^*G\setminus \ker \rg, \quad p(\gamma,\xi) = p_0(\rg(\gamma,\xi)). 
\end{equation*}
Here $\rg$ is the range map of the symplectic groupoid $\Gamma = T^*G$. The computations also give a local expression for the sub-principal $G$-symbol of $P$, that is, for the collection of the sub-principal symbols of the operators $P_x$. Now it turns out that the Hamiltonian flow $\chi$ of the principal $G$-symbol $p$ is complete and right invariant, and we get the required Lagrangian submanifolds that will describe the evolution of singularities:
\begin{equation*}
\forall t\in\RR, \qquad \Lambda_{t} = \chi_{t}(A^{*}G\setminus 0).
\end{equation*}
This already produces a $C^\infty$ family of homogeneous Lagrangian submanifolds of $T^{*}G\setminus 0$ that satisfies the group relation, with respect to the product in $T^{*}G$:
$$ \Lambda_{t}.\Lambda_{s} =  \Lambda_{t+s} . $$
Moreover $\Lambda_{t}\subset \dt{T}^{*}G := T^{*}G\setminus(\ker \rg\cup\ker \sg)$, that is, in the vocabulary of \cite{LV1}, every $\Lambda_{t}$ is a $G$-relation, while the global object coming with   the family $(\Lambda_{t})_{t}$:
$$ \Lambda = \{ (t,\tau,\gamma,\xi) \ ; \ \tau + p(\gamma,\xi) = 0, \ (\gamma,\xi)\in \Lambda_{t} \} \subset T^{*}\RR\times T^{*}G $$
is a family $\RR\times G$-relation. As in \cite{LV1}, this construction highlights the important role of the symplectic groupoid structure of $T^{*}G$ in analysis.

There is a last result, of technical nature, that intervenes in the proof of Theorem \ref{intro:main-thm}. Indeed, assuming that the Lagrangian submanifolds $\Lambda_{t}$ provide the good candidate for Theorem \ref{intro:main-thm}, we are led to search a first order parametrix $U_{0}$ for $\partial_{t} + i P$  among  the Fourier integral $G$-operators associated with $(\Lambda_{t})_{t}$. This amounts to solve the transport equation for principal symbols: 
$$ \frac{\partial}{\partial t}\sigma_{\pr}(U_{0}) + i \sigma_{\pr}(PU_{0}) = 0$$ 
and thus it requires to express the principal symbol of the convolution product $PU_{0}$ of the lagrangian distributions $P$ and $U_{0}$. Since by construction and on purpose, the principal symbol $p_{0}$ vanishes on $\rg\Lambda$, we need to look for the next term in the asymptotic expansion of the total symbol of $PU_{0}$. This is what is achieved, modulo some technical details, by using the following result: 
\begin{introthm}
Let $Q\in\Psi_c^m(G)$, with principal $G$-symbol $q$, sub-principal symbol $q^{1s}$, and let $C$ be a $G$-relation such that  $q$ vanishes on $C$. 
Let $A\in I^{m'}(G,C;\Omega^{1/2})$ and $a$ be a principal symbol of $A$. \\
Then 
\begin{equation*}
   QA \in I^{m+m'-1}(G,C;\Omega^{1/2}) \text{ and }   \sigma_{\pr}(QA) = -i\cL_{q}a+ q^{1s}a.
\end{equation*}
Here $\cL_{q}$ is the Lie derivative along the Hamiltonian vector field $H_{q}$ of $q$.
\end{introthm}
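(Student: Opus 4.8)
The statement is the Lie-groupoid counterpart of H\"ormander's formula for the action of a pseudodifferential operator on a Lagrangian distribution (\cite{Horm-FIO1,Horm-classics}), sharpened by one order because the principal $G$-symbol of $Q$ is assumed to vanish on $C$: the composition calculus of \cite{LV1} places $QA$ a priori in $I^{m+m'}(G,C;\Omega^{1/2})$ with principal symbol $q|_{C}\cdot a$, which vanishes by hypothesis, so $QA$ drops to $I^{m+m'-1}$ and the content of the theorem is the identification of the new principal symbol. Since the conclusion is local on $G$ and on $M$, and since a compactly supported $G$-pseudodifferential operator restricts on each source fibre $G_{x}=\sg^{-1}(x)$ to an ordinary pseudodifferential operator $Q_{x}$ while the convolution $QA$ restricts to the composition $Q_{x}A_{x}$ on $G_{x}$, it suffices to run the classical computation on $G_{x}$, smoothly in the parameter $x$, and then to check that the outcome reassembles into the asserted object on $T^{*}G$. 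Concretely, using the local models of \cite{LV1} we choose near a point of $C$ a non-degenerate phase function $\phi(y,\theta)$, with $y$ in a chart of $G_{x}$ and $\theta\in\RR^{N}$, parametrizing $C$, and we represent $A$ fibrewise by an oscillatory integral $\int e^{i\phi(y,\theta)}\alpha(y,\theta)\,d\theta$, where the leading term of $\alpha$ corresponds, through $\phi$ and up to the Maslov factor, to the half-density principal symbol $a$ of $A$ on $C$.

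Applying $Q$ to this oscillatory integral one has the standard asymptotic expansion $Q(e^{i\phi(\cdot,\theta)}\alpha(\cdot,\theta))\sim e^{i\phi}\sum_{j\ge 0}r_{j}$, where $r_{0}=q(y,d_{y}\phi)\,\alpha$ and $r_{1}$ is the term in which the second derivatives of $\phi$ and the sub-principal symbol of $Q$ occur. Since $q$ vanishes on $C=\{(y,d_{y}\phi(y,\theta))\,:\,d_{\theta}\phi(y,\theta)=0\}$, the function $(y,\theta)\mapsto q(y,d_{y}\phi(y,\theta))$ vanishes on the critical manifold $C_{\phi}=\{d_{\theta}\phi=0\}$; by non-degeneracy of $\phi$ the functions $\partial_{\theta_{k}}\phi$ cut out $C_{\phi}$ transversally and hence generate, near $C_{\phi}$, the ideal of functions vanishing on it, so Hadamard's lemma yields $q(y,d_{y}\phi)=\sum_{k=1}^{N}(\partial_{\theta_{k}}\phi)\,b_{k}$ with $b_{k}$ smooth. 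Integration by parts in $\theta$ turns each term $(\partial_{\theta_{k}}\phi)\,b_{k}\,\alpha\,e^{i\phi}$ into one of order one lower, which establishes $QA\in I^{m+m'-1}(G,C;\Omega^{1/2})$; collecting the surviving leading contributions --- the one coming from this integration by parts applied to $r_{0}$, together with the restriction to $C_{\phi}$ of $r_{1}$ --- and transporting them back through $\phi$ expresses $\sigma_{\pr}(QA)$, on $C$, as the sum of a first order differential operator applied to the symbol $a$ and a zeroth order multiple of $a$.

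It remains to identify this transport operator geometrically. Its principal part is the Hamiltonian vector field $H_{q}$ of $q$ on $T^{*}G$, which is tangent to $C$: for $v\in TC$ one has $\omega(H_{q},v)=dq(v)=0$ because $q$ vanishes on $C$, so $H_{q}$ lies in the symplectic orthogonal $(TC)^{\omega}=TC$ of the Lagrangian $C$. Acting on the half-density $a$ this part is the Lie derivative $\cL_{q}a$, contributing $-i\cL_{q}a$ once the factor $i$ is restored, while the zeroth order part is, by the local formula for the sub-principal $G$-symbol computed earlier, exactly $q^{1s}a$; this gives $\sigma_{\pr}(QA)=-i\cL_{q}a+q^{1s}a$. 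The delicate point of the argument is not the model computation, which follows the classical template of \cite{Horm-classics}, but the transition from the fibrewise picture to the global one on $G$: one must check that the family of fibrewise Hamiltonian vector fields, which are right invariant, glues through the range map $\rg$ of the symplectic groupoid $\Gamma=T^{*}G$ into the single field $H_{q}$ on $\dt{T}^{*}G$, and that the fibrewise transport densities assemble into a genuine half-density on the $G$-relation $C$; this rests on the compatibility of the symbol calculus of \cite{LV1} with the groupoid structure. A secondary, purely bookkeeping, matter is to track the Maslov bundle and the order normalization $(n^{(1)}-n^{(0)})/4$ so that orders and principal-symbol spaces match on both sides, but this is routine given the conventions of \cite{LV1} recalled above.
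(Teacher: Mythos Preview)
Your argument is correct and its core is the same Hadamard-factorization-plus-integration-by-parts mechanism as in the paper, but the two executions differ in a way worth noting. You reduce to the fibrewise operators $Q_{x}A_{x}$ on $G_{x}$, parametrize $C$ by a general non-degenerate phase $\phi(y,\theta)$, factor $q(y,d_{y}\phi)=\sum_{k}(\partial_{\theta_{k}}\phi)b_{k}$ along the critical set $C_{\phi}$, integrate by parts in the auxiliary variables $\theta$, and then argue that the resulting family of transport operators glues through the symplectic groupoid structure of $T^{*}G$. The paper instead treats $QA$ directly as a single convolution on $G$: after passing to special coordinates in which $C$ is a graph $\{(h'(\xi),\xi)\}$ (via \cite[Lemma~25.2.5, Theorem~21.2.16]{Horm-classics}), it writes $A$ with phase $\langle\gamma,\xi\rangle-h(\xi)$ and no auxiliary $\theta$-variables, factors $q^{0}(\gamma,\xi)=\sum_{j}q_{j}(\gamma,\xi)(\gamma_{j}-\partial h/\partial\xi_{j})$, and integrates by parts in the frequency variable $\xi$. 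The divergence computation and the matching with the sub-principal formula \eqref{eq:repres-symb-sous-princip} are then carried out explicitly on $G$ rather than appealed to.

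What each route buys: your reduction lets you invoke the classical result essentially as a black box, at the price of the gluing discussion you flag as ``delicate''; the paper's choice of the graph parametrization $\gamma=h'(\xi)$ avoids both the auxiliary $\theta$-variables and the fibrewise-to-global passage, making the identification of $\mathrm{div}(H_{q^{0}}|_{\Lambda})$ with the cross-derivative term in $q^{1s}$ a short explicit calculation. The authors in fact acknowledge, just before their proof, that your fibrewise route via \cite[Theorem~25.2.4]{Horm-classics} is available, but opt for the convolution-on-$G$ presentation as more self-contained.
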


Many interesting situations produce non compactly supported operators $P$: for instance, if $\Delta$ is a Laplacian on $G$ then $\sqrt{\Delta} = P + S$ with $P$ as above and $S\in \cH^{\infty}$ \cite{Vassout2006}. The main theorem trivially extends to such non compactly supported operators: one just needs to replace $C^{\infty}_{c}(G)$ by ${\cH^{\infty} }$ in \eqref{intro:parametrix}. We describe at the end of the paper several situations where Theorem \ref{intro:main-thm} applies:
\begin{enumerate}
\item The usual  pseudodifferential calculus on a compact manifold without boundary $X$. We use the pair groupoid $ G = X  \times X \rightrightarrows X$. Since $X$ itself is an orbit, we have $C^{\infty,0}_{\orb}(G,E)=C^{\infty}(X\times X,E)$ and we just recover the classical result on manifolds. 
\item The longitudinal calculus on foliations \cite{Connes1979}. We use  the holonomy groupoid. We recover a construction in  \cite{Kordyukov1994} by  a quite different approach.
\item Right invariant calculus on a Lie group $G$. We use $G$ as a groupoid with units space $\{ e \}$. 
\item The $b$-calculus on manifolds with corners \cite{Melrose-Piazza1992}. We use the $b$-groupoid  \cite{Monthubert2003}. 
\item The calculus on manifolds with fibred boundary or with iterated fibred corners  \cite{Mazzeo-Melrose,DLR}. We use  the groupoid of  \cite{DLR}.  
\end{enumerate}
As far as we know, the results obtained for cases (3), (4), (5) above are new. 

The next section contains the basic definitions and notation necessary for the sequel and can be considered as an extension of the introduction for the unfamiliar reader. 

\bigskip \noindent \textbf{Acknowledgments}
The authors are grateful to Claire Debord, Omar Mohsen, Victor Nistor and Georges Skandalis for helpful and stimulating  discussions or remarks. The first author is thankful for the hospitality of  the IMJ-PRG, Paris University, where part of this project was achieved. Most part of this work has been realized with the support of the  Grant ANR-14-CE25-0012-01  SINGSTAR.

\section{Notation and reminders}\label{sec:generalities}

\noindent\textbf{Densities on manifolds.} If $E$ is a real vector space of dimension $n$ and $\alpha\in\RR$, we denote by $\Omega^{\alpha}(E)$ the vector space of maps $\omega : \Lambda^{n}E\setminus  0\to \CC$, called $\alpha$-densities, such that $\omega(t V) = \vert t\vert^{\alpha}\omega(V)$ for any $t \not= 0$ and $V\in \Lambda^{n}E\setminus 0$. 
For any $C^{\infty}$ real vector bundle $E\to X$, the vector bundle $\Omega^{\alpha}(E) = \cup_{x}\Omega^{\alpha}(E_{x})\to X$ is a $C^{\infty}$ line bundle, with transition functions given by $\vert \det( g_{ij})\vert^{\alpha}$ if $(g_{ij})$ is a set of transition functions for $E$. Sections of $\Omega^{\alpha}(E)$ are called $\alpha$-densities on $E$ and sections of $\Omega^{\alpha}_{X} := \Omega^{\alpha}(TX)$ are called $\alpha$-densities on $X$.  Densities bundles are always trivialisable, but not canonically in general: one can construct an everywhere positive section using local trivializations.  

A fundamental point is that compactly supported one densities on $X$ can be integrated over $X$. More precisely, there is a  unique linear form $ \int_{X} : C^{\infty}_{c}(X,\Omega^{1}_{X})\To \RR$ such that if $f = \mathsf{f}(x)\vert dx\vert$ is compactly supported in a local chart $U$ with local coordinates $x=(x_{1},\ldots,x_{n})$, then 
$$ \int_{X} f = \int_{\RR^{n}} \mathsf{f}(x) dx.$$
Above, $\vert dx\vert$ is the one density defined by $ \vert dx\vert =\vert dx_{1}\wedge \cdots \wedge dx_{n}\vert$.  Diffeomorphisms $\phi : X \to Y$ provide isomorphisms $\phi^{*} : \Omega^{\alpha}_{Y} \to \Omega^{\alpha}_{X}$ given by $\phi^{*} \omega(V) = \omega( \phi_{*}V)$. By construction, the integral of one densities is invariant under the action of diffeomorphisms. Densities are usually handled with the following canonical isomorphisms: 
\begin{enumerate}[-]
\item $\Omega^{\alpha}(E)\otimes \Omega^{\beta}(E) \simeq \Omega^{\alpha+\beta}(E)$
\item $\Omega^{\alpha}(E\oplus F) \simeq \Omega^{\alpha}(E)\otimes \Omega^{\alpha}(F)$,
\item $\Omega^{\alpha}(E^{*}) \simeq \Omega^{-\alpha}(E)$
\item if $0\to F \to E \to G \to 0$ is exact, then $\Omega^{\alpha}(E)\simeq \Omega^{\alpha}(F)\otimes\Omega^{\alpha}(G)$. 
\end{enumerate}

\noindent\textbf{Lie groupoids.} A Lie groupoid $G \rightrightarrows M$ is a pair of manifolds $(G,M)$ of respective dimensions generally denoted by $n=n^{(1)}+n^{(0)}$ and $n^{(0)}$, together with the following data and required properties.  The data are:
\begin{enumerate}[(a)]
\item two surjective submersions $r,s :G\to M$, called range and source,
\item a $C^{\infty}$ section $\upsilon : M \to G$ of both $r$ and $s$, assimilated to an inclusion,
\item a $C^{\infty}$  map $\iota : G \to G$ called inversion, noted: $\gamma^{-1} := \iota(\gamma)$,
\item a $C^{\infty}$  map $G^{(2)}=\{ (\gamma_{1},\gamma_{2})\in G^{2} \ ; \ s(\gamma_{1})=r(\gamma_{2})\}  \to G$ called multiplication: $\gamma_{1}\gamma_{2} :=m(\gamma_{1},\gamma_{2})$.
\end{enumerate}
The required properties are those giving a sense to the following intuition: a groupoid is the algebraic structure obtained from a group $G$ after spreading out its unit into a whole subset $M$, that is
\begin{enumerate}[(i)]
\item $r(\gamma_{1}\gamma_{2}) = r(\gamma_{1})$, $s(\gamma_{1}\gamma_{2}) = s(\gamma_{2})$ whenever it makes sense,
\item $\upsilon(r(\gamma))\gamma = \gamma$, $ \gamma \upsilon(s(\gamma)) = \gamma$ for all $ \gamma$,
\item $r(\gamma^{-1})= s(\gamma)$, $s(\gamma^{-1}) = r(\gamma)$  for all $ \gamma$,
\item $ \gamma \gamma^{-1} = \upsilon(r(\gamma))$, $ \gamma^{-1} \gamma = \upsilon(s(\gamma))$  for all $ \gamma$,
\item  $ (\gamma_{1}\gamma_{2})\gamma_{3} =  \gamma_{1}(\gamma_{2}\gamma_{3})$ whenever it makes sense.
\end{enumerate}
It follows that  $\upsilon$ is an embedding (often omitted in the notation), that $\iota$ is an involutive diffeomorphism and $m$ a surjective submersion. 
We note $G_{x}$, the $s$-fiber at $x\in M$, $G^{x}$ its $r$-fiber, and we set $T_{s}G = \ker ds$, $T_{r}G = \ker dr$. We note $L_{\gamma}$, $R_{ \gamma}$ the left and right multiplication by $ \gamma$. 
The Lie algebroid $AG$ of $G \rightrightarrows M$ is by definition here the vector bundle $\ker ds \vert_{M} \to M$. The differential map $ dr : AG \to TM$ is denoted by $\mathfrak{a}$ and called the anchor map. To any $C^{\infty}$ section $X\in \Gamma(AG)$ corresponds a right invariant vector field $\widetilde{X}\in \Gamma(TG)$, defined by $\widetilde{X}_{\gamma} := dR_{\gamma}(X_{r(\gamma)})$, and conversely. The right invariance means  $\widetilde{X}_{\gamma\eta}= dR_{\eta}(\widetilde{X}_{\gamma})$. This allows to define a Lie algebra structure on $\Gamma(AG)$ that satisfies 
$$\forall X,Y\in \Gamma(AG),\  \forall f\in C^{\infty}(M), \quad  [ X, fY ] = f[ X,Y] + (\mathfrak{a}(X)f) Y .$$
We refer to  \cite{Moerdijk2003,Mackenzie2005} for a detailed account on Lie groupoids and Lie algebroids. 

We will use several $\alpha$-densities bundles over $G$, often for $\alpha = \pm 1/2, \pm 1$: 
\begin{enumerate}[-]
\item The bundles $\Omega^{\alpha}(\ker d\pi)$ of densities along the fibers of $\pi = s,r$. They are conveniently replaced for computations by the respective isomorphic bundles $\Omega^{\alpha}_{s} = \Omega^{\alpha}(r^{*}AG)$ and $ \Omega^{\alpha}_{r} = \Omega^{\alpha}(s^{*}AG)$. The isomorphisms are induced  by: 
 \begin{equation}\label{nota:cR}
  \cR : r^{*}AG \To T_sG,\quad  (\gamma, X) \longmapsto \big(\gamma, (dR_{\gamma})_{r(\gamma)}(X)\big), 
 \end{equation}
  \begin{equation}\label{nota:cS}
  \cS : s^{*}AG \To T_rG, \quad  (\gamma, X) \longmapsto \big(\gamma, (dL_{\gamma}\circ \iota)_{s(\gamma )}( X)\big).
 \end{equation}
 \item  The ``symmetrisation'' of the preceeding ones: $\Omega^{\alpha} = \Omega^{\alpha}_{s} \otimes \Omega^{\alpha}_{r}$, which is suitable for convolution on $G$. 
\item The bundle $\Omega^{1/2}_{0} = \Omega^{-1/2} \otimes \Omega^{1}_{G}$ necessary for the pairing: 
\begin{equation*}
f\in C^{\infty}(G,\Omega^{1/2}), \ \omega\in C^{\infty}_{c}(G,\Omega^{1/2}_{0}), \quad \langle f , \omega \rangle = \int_{G} f g . 
\end{equation*}
Actually, there is a natural isomorphism $\Omega^{1/2}_{0}\simeq \Omega^{1/2}(r^{*}TM)\otimes \Omega^{1/2}(s^{*}TM)$.
\end{enumerate}
\medskip \noindent\textbf{The cotangent groupoid}
The cotangent space $T^{*}G$ has a non trivial groupoid structure: $\Gamma = T^{*}G \rightrightarrows A^{*}G$, with structure maps 
$\rg,\sg,m_{{}_{\Gamma}},\iota_{{}_{\Gamma}}$ defined as follows:
\begin{enumerate}[-]
\item $\rg(\gamma,\xi) = \big(r(\gamma), {}^{t}dR_{\gamma}(\xi\vert_{T_{\gamma}G_{s(\gamma)}})\big)$ and  $\sg(\gamma,\xi) = \big(s(\gamma), -{}^{t}d(L_{\gamma}\circ \iota)(\xi\vert_{T_{\gamma}G^{r(\gamma)}})\big)$,
\item $(\gamma_{1},\xi_{1})(\gamma_{2},\xi_{2}) = (\gamma_{1}\gamma_{2}, \xi)$ with $\xi(dm(t_{1},t_{2})) = \xi_{1}(t_{1}) +  \xi_{2}(t_{2})$,
\item $(\gamma,\xi)^{-1} = (\gamma^{-1}, - {}^{t}d\iota(\xi))$.
\end{enumerate}
This is a \emph{symplectic} groupoid, which means that the graph of $m_{{}_{\Gamma}}$ is a Lagrangian submanifold of $(T^{*}G)^{3}$ provided with the symplectic form $\omega\oplus \omega\oplus-\omega$, with $\omega$ the canonical symplectic form of $T^{*}G$. We refer to \cite{CDW,Mackenzie2005} for a detailed account on symplectic groupoids and on the related notion of $VB$-groupoids, as well as to \cite{LMV1,LV1} for the interest of this symplectic structure regarding the theory of distributions on groupoids. We will denote $$\quad  T^{*}_{\mbt} G = T^{*}G\setminus \ker \rg \ \text{ and } \quad \dt{T}^{*}G = T^{*}G\setminus (\ker \rg\cup\ker \sg). $$

We will consider in this paper homogeneous lagrangian submanifods of $T^{*}G\setminus 0$ that avoids the kernel of $\rg$ and $\sg$. We call them $G$-relations, in reference to the term \emph{canonical relations} often employed for (product) manifolds. Under mild assumptions, $G$-relations compose well in the groupoid $T^{*}G$ \cite{LV1}. $G$-relations $\Lambda$ such that $\sg\vert_{\Lambda}$ and $\rg\vert_{\Lambda}$ are diffeomorphisms onto their ranges are called invertible. 
 We will sometimes use densities along the $\sg$ and $\rg$-fibers of the cotangent groupoid $T^{*}G$.  Both are naturally isomorphic and: 
 \begin{equation*}
\Omega^{ \alpha}_{\sg} \simeq \Omega^{ \alpha}_{\rg} \simeq  \hat{\Omega}^{ \alpha} \otimes \hat{\Omega}^{-\alpha}_{G}
\end{equation*}
where $\hat{E}$ denotes the pull back to $T^{*}G$ of the bundle $E\to G$. Also, we note that $\Omega^{1}_{\sg} \vert_{A^{*}G} = \Omega^{1}(AT^{*}G) = (\cD_{AG}^{\tr})^{-1}$ where $\cD_{AG}^{\tr}$ is the transverse density bundle of $AG$ \cite{Crainic-Mestre2019}.

\medskip\noindent\textbf{The convolution algebra} Throughout this paper we make the convention:
\begin{equation*}
C^{\infty}(G) : = C^{\infty}(G,\Omega^{1/2}), 
\end{equation*}
that is, we omit the ubiquitous density bundle $\Omega^{1/2}$ in the notation. We apply the same convention for other sections of $\Omega^{1/2}$ with various regularity and support conditions. When the sections of a different bundle are considered, this bundle will be always mentionned. 

The \emph{convolution algebra structure} on $C^{\infty}_{c}(G)$ refers to the product $\star$ canonically defined from any of  the following three intuitive formulas:
\begin{equation}\label{nota:intuitive-star}
f \star g (\gamma) =  \int_{\gamma_{2}\in G_{s(\gamma)}} f(\gamma\gamma_{2}^{-1})g(\gamma_{2}) =  \int_{\gamma_{1}\in G^{r(\gamma)}} f(\gamma_{1})g(\gamma_{1}^{-1}\gamma) = \int_{(\gamma_{1},\gamma_{2})\in m^{-1}(\gamma)} f(\gamma_{1})g(\gamma_{2})
\end{equation}
This is justified as follows. Write $f = \mathsf{f} (\mu_{s}\mu_{r})^{1/2}$,  $g = \mathsf{g} (\mu_{s}\mu_{r})^{1/2}$ with $ \mathsf{f},\mathsf{g}\in C^{\infty}_{c}(G,\CC)$, $\mu_{s}=r^{*}\mu\in C^{\infty}(G,\Omega^{1}_{s})$, $\mu_{r}=s^{*}\mu \in C^{\infty}(G,\Omega^{1}_{r})$ for some positive $\mu\in C^{\infty}(M,\Omega^{1}(AG))$. Then, whenever $\gamma_{1}\gamma_{2} = \gamma$:
\begin{equation*}
 f(\gamma_{1})g(\gamma_{2}) =  \mathsf{f}(\gamma_{1})\mathsf{g}(\gamma_{2}) \mu_{r}^{1/2}(\gamma_{1})\mu_{s}^{1/2}(\gamma_{2}) (\mu_{s} \mu_{r})^{1/2}(\gamma) \text{ and } \mu_{r}(\gamma_{1})=\mu_{s}(\gamma_{2}).
\end{equation*}
 We now may set rigorously: 
 \begin{equation}\label{nota:rigorous-star-s-style}
 f \star g (\gamma) = \Big( \int_{\gamma_{2}\in G_{s(\gamma)}}  \mathsf{f}(\gamma\gamma_{2}^{-1})\mathsf{g}(\gamma_{2}) \cR_{*}\mu_{s}(\gamma_{2})\Big) (\mu_{s} \mu_{r})^{1/2}(\gamma).
 \end{equation}
 This gives consistance to the first formula in \eqref{nota:intuitive-star}. The second and third formulas are obtained from the first one using the diffeormorphisms  $ L_{\gamma}\circ \iota : G_{s(\gamma)} \to G^{r(\gamma)}$ and $ (L_{\gamma}\circ \iota, \Id) : G_{s(\gamma)} \to m^{-1}(\gamma)$. Equivalently, one  can directly define them as we did for the first one using the suitable structural isomorphisms to create the appropriate one densities on $G^{r(\gamma)}$ and $m^{-1}(\gamma)$. With the notation above, this concretely means: 
\begin{align}\label{nota:rigorous-star-r-m-style}
 f \star g (\gamma) & = \Big( \int_{\gamma_{1}\in G^{r(\gamma)}}  \mathsf{f}(\gamma_{1})\mathsf{g}(\gamma_{1}^{-1}\gamma) \cS_{*}\mu_{r}(\gamma_{1})\Big) (\mu_{s} \mu_{r})^{1/2}(\gamma) \\
  & = \Big( \int_{(\gamma_{1},\gamma_{2})\in m^{-1}(\gamma)}  \mathsf{f}(\gamma_{1})\mathsf{g}(\gamma_{2}) \cM_{*}\mu_{s}(\gamma_{2})\Big) (\mu_{s} \mu_{r})^{1/2}(\gamma).
 \end{align}
 with, for the last line:
 \begin{equation}\label{nota:cM}
  \cM : r^{*}AG\vert_{G_{s(\gamma)}} \To T m^{-1}(\gamma), \quad (\gamma_{2}, X) \longmapsto \big(\gamma\gamma_{2}^{-1}, \gamma_{2}, d(L_{\gamma\gamma_{2}^{-1}}\circ \iota)( X), dR_{\gamma_{2}}(X)\big) .
 \end{equation}

By $C^{\infty,0}_{\pi}(G)$, we denote the space  of elements in $C_{c}(G)$ that  belong to $C(U_{(0)},C^{\infty}(U_{(1)}))$ over any local trivializations $\kappa : U \overset{\simeq}{\to} U_{(0)}\times U_{(1)}$ of $\pi$ (here $\pi = \pr_{1}\circ\kappa$). The topology is modeled on that of  $C(U_{(0)},C^{\infty}(U_{(1)}))$ and is Fréchet. We write $C^{\infty,0}_{\pi,c}$ for $C^{\infty,0}_{\pi}\cap C_{c}$, and equip it with the corresponding LF-topology.

\medskip\noindent\textbf{The reduced $C^{*}$-algebra of a groupoid.}
The space  $C^\infty_c(G,\Omega^{1/2}_s)$ comes with a natural prehilbertian $C(M)$-module structure: 
\begin{equation}
    f,g\in C^\infty_c(G,\Omega^{1/2}_{s}),\quad \langle f \ \vert \ g \rangle_{s}(x) = \int_{G_{x}}\overline{f(\gamma)}g(\gamma). 
\end{equation}
Its completion as a hilbertian $C(M)$-module is denoted by $ L_{s}^{2}(G)$.
The homomorphism $\lambda : C^\infty_c(G)\longrightarrow \cL(L_{s}^{2}(G))$ given by: 
$$ \forall f \in C^\infty_c(G), \ g \in C^\infty_c(G,\Omega^{1/2}_{s}), \quad \lambda(f)(g)(\gamma)  = f \star g(\gamma) = \int_{G_{s(\gamma)}}f(\gamma\alpha^{-1})g(\alpha) $$
is well defined, injective,  and the reduced $C^{*}$-algebra of $G$, denoted by $C^*_{r}(G)$,  is the completion of $C^\infty_c(G)$ with respect to the $C^{*}$-norm $\| f\| = \| \lambda(f)\|_{\mathrm{op}}$. The extended homomorphism 
$$\lambda : C^*_{r}(G)\longrightarrow \cL(L_{s}^{2}(G))$$ 
is called the left regular representation.  Starting from $C^\infty_c(G,\Omega^{1/2}_r)$, we get a Hilbert $C(M)$-module $L^{2}_{r}(G)$ and the right regular representation $\rho : C^*_{r}(G)\longrightarrow \cL(L_{r}^{2}(G))$. The adjunction map $* : L_{s}^{2}(G)\To L_{r}^{2}(G)$ provides a unitary anti-homomorphism. The unfamiliar reader  may consult \cite{Renault1980,Connes1994,KhosSkand2} for groupoids $C^{*}$-algebras and  \cite{WO,JenThom,skand-cours2015} for Hilbertian modules.

\medskip\noindent\textbf{Distributions.}
We consider in this article various spaces of distributions on $G$, always valued in $\Omega^{1/2}$, which thus is safely omitted.  We set:
\begin{equation*}
\cD'(G) := \cD'(G,\Omega^{1/2}). 
\end{equation*}
This is the topological dual of the space: 
\begin{equation*}
\cD(G) := C^{\infty}_{c}(G, \Omega^{1/2}_{0}),
\end{equation*}
where $\Omega^{1/2}_{0} := \Omega^{-1/2}\otimes\Omega^{1}_{G}$.  The elements of $ \cD(G)$ will be called test functions, with a slight abuse of vocabulary. We denote by $\cE'(G)$ the subspace of $\cD'(G)$ consisting of compactly supported distributions. We set: 
\begin{equation}
\cD'_{\pi}(G) = \{ u \in \cD'(G) \ ; \ \forall f\in C^{\infty}_{c}(G,\Omega^{1/2}_{s}),\ \pi_{!}(uf) \in C^{\infty}(M,\Omega^{1/2}(AG)) \}
\end{equation}
where $\pi_{!}$ denotes the pushforward of distributions and $\pi = r,s$. Elements of $\cD'_{\pi}(G)$ are called  $C^{\infty}$-transversal distributions with respect to $\pi$ \cite{Androulidakis-Skandalis,LMV1,vEY}. The convolution product $\star$ extends by continuity to transversal distibutions, providing  $\cE_{\pi}'(G)$ with the structure of a unital algebra and  $\cE_{r,s}'(G) = \cE_{s}'(G)\cap \cE_{r}'(G)$ with the structure of an involutive unital algebra. The unit is $\delta_{M}(f)=\int_{M}f$ and the involution is $u^{*} = \overline{\iota^{*}u}$. Elements of $\cD'_{\pi}(G)$  can be restricted fiberwise, giving $C^{\infty}$ families over $M$ of distributions in the fibers, the space of whose families being denoted by $C^{\infty}_{\pi}(M,\cD'(G))$, or viewed as $C^{\infty}(M)$-linear continuous operators, the space of whose operators being denoted by $\cL_{C^{\infty}(M)}(C^{\infty}_{c}(G),C^{\infty}(M,\Omega^{1/2}(AG)))$,  and there are canonical isomorphisms:
\begin{equation*}
 \cD'_{\pi}(G) \simeq C^{\infty}_{\pi}(M,\cD'(G)) \simeq \cL_{C^{\infty}(M)}(C^{\infty}_{c}(G),C^{\infty}(M,\Omega^{1/2}(AG))).
\end{equation*}
We will also consider continuously transversal distributions with respect to $\pi = r,s$:
\begin{equation}
\cD'_{\pi,0}(G) = \{ u \in \cD'(G) \ ; \ \forall f\in C^{\infty}_{c}(G,\Omega^{1/2}_{s}),\ \pi_{!}(uf) \in C(M,\Omega^{1/2}(AG)) \}. 
\end{equation}
By rephrazing the arguments in \cite{LMV1}, one gets:
\begin{equation*}
 \cD'_{\pi,0}(G) \simeq C_{\pi}(M,\cD'(G)) \simeq \cL_{C(M)}(C^{\infty,0}_{\pi,c}(G),C(M,\Omega^{1/2}(AG))).
\end{equation*}

\medskip\noindent\textbf{$G$-operators: } they are the continuous linear maps $C^\infty_c(G)\to C^\infty(G)$ given by right invariant families of (linear continuous) operators acting in the $s$-fibers. More precisely, $P$ is a $G$-operator if there exists a family $P_x : C^\infty_c(G_x,\Omega^{1/2}_{G_x})\longrightarrow C^\infty(G_x,\Omega^{1/2}_{G_x})$, $x\in M$,   such that for all $x\in M$, $\gamma\in G$, $f \in C^\infty_c(G)$:
\begin{equation}\label{eq:axiom-G-op}
  P(f)|_{G_x} = P_x(f|_{G_x}) \text{ and } P_{r(\gamma)}\circ R_\gamma=R_\gamma \circ P_{s(\gamma)},\ \forall \gamma\in G.
\end{equation}
This is  equivalent to requiring that $P$ maps  $C^\infty_c(G)\to C^\infty(G)$ continuously and that:
$$ P(f) \star g = P(f \star g) \quad \text{ for any } f,g \in C^\infty_c(G). $$
A $G$-operator $P$ as an adjoint if there exists a  $G$-operator $Q$ such that $ (Pf)^{*}\star g = f^{*}\star (Qg)$ for any $f,g$. We denote by $\Op_{G}$ (resp. $\Op^{*}_{G}$, $\Op^{*}_{G,c}$) the space of (resp. adjointable, compactly supported and adjointable) $G$-operators. 

It is proved in \cite{LMV1} that the map 
\begin{equation*}
\cD'_{r}(G)\to \Op_{G},\quad u \mapsto u\star \cdot
\end{equation*}
is an isomorphism, with inverse $P \mapsto k_{P}$,  given by  
$k_{P}(\gamma) = p_{r(\gamma)}(r(\gamma),\gamma^{-1})$ where $p_{x}$ denotes the Schwartz kernel of $P_{x}$. The same map induces an isomorphism:
\begin{equation*}
\mathrm{Op}_{G,c}^{*}\simeq\cE'_{r,s}(G).
\end{equation*}

\medskip\noindent\textbf{Pseudodifferential $G$-operators and regularizing operators.} Among the class of $G$-operators one finds the well known subclass of pseudodifferential $G$-operators ($G$-PDO) \cite{Connes1979,MP,NWX,Vassout2006}, that is,  of right invariant families of pseudodifferential operators in the $s$-fibers: they coincide with left convolution by distributions in: 
\begin{equation}
 \Psi^*_{G} = I^{*+(n^{(1)} - n^{(0)})/4}(G,M;\Omega^{1/2}).
\end{equation}
where here $I$ refers to the space of conormal distributions. One has a principal symbol map:
\begin{equation*}
   \sigma_0: \Psi^m_{G}  \longrightarrow S^{[m]}(A^*G)
\end{equation*}
with kernel $ \Psi^{m-1}_{G}$. Here $S^{[m]} = S^{m}/S^{m-1}$. It is well known that  $(\Psi^*_{G,c},\star )$ is an involutive unital algebra and $\sigma_{0}$ an algebra homomorphism. 
When $P\in \Psi^{1}_{G,c}$ is elliptic and symmetric, then its closure, as an unbounded operator on $C^{*}_{r}(G)$ with domain $C^{\infty}_{c}(G)$, is selfadjoint and \emph{regular} \cite{Vassout2006,BaajPhD,Baaj1983,skand-cours2015}. There is a canonical scale $H^{t}$, $t\in\RR$, of Hilbert $C^{*}_{r}(G)$-modules, that we call intrinsic Sobolev modules, which do not depend, up to isomorphism of Hilbertian structures, on the symmetric elliptic operator $P\in \Psi^{1}_{G,c}$  used to define them: 
\begin{equation*}
 H^{t} = \overline{C^{\infty}_{c}(G)}^{ \langle \cdot | \cdot \rangle_{t} }, \quad  \langle f \ | \ g \rangle_{t}  =   \langle (1 + P^{2})^{t} \star f \ | \ g \rangle \in C^{*}_{r}(G),
\end{equation*}
where $ \langle a | b \rangle = a^{*}b$. Then any $Q\in \Psi^{m}_{G,c}$ gives a bounded homomorphism  $Q \in \cL(H^{t}, H^{t-m})$ and for any $ m > 0$, the inclusion $H^t\hookrightarrow H^{t+m}$ is a compact homomorphism of Hilbert modules. All of this material is developped in  \cite{Vassout2006}. Although we call the spaces $H^{t}$  \emph{Sobolev} modules, we may think of them as modules of abstract pseudodifferential operators of order $ < - t $. Indeed, $H^{t}$ is also the completion of $\Psi^{ < - t }_{G,c}$ for the norm $\| Q \|_{t} = \| (1 + P^{2})^{t/2} Q\|_{C^{*}_{r}(G)}$. Scales of Hilbert modules closer to the usual notion of Sobolev regularity of order $t$ for functions or distributions will be obtained using the left regular representation of $H^{t}$. 

The algebra $ \Psi^{*}_{G,c}$ is too small for practical purposes. For instance, the inverse of an elliptic element in $ \Psi^{*}_{G,c}$ which is invertible as an operator between Sobolev modules, has no reason to be compactly supported. This phenomenom propagates to operators obtained by holomorphic functional calculus and we will eventually face it also when building an approximation of $E(t)=e^{itP}$ by Fourier integral $G$-operators. A suitable enlargement of $\Psi^{*}_{G,c}$ is provided by: 
\begin{equation}\label{eq:enlarged-PDO}
\Psi^{*}_{G} : =\Psi^{*}_{G,c} +  \cH^{\infty}, \quad \text{ where } \cH^{\infty} = H^{\infty} \cap (H^{\infty})^{*}
\end{equation}
and $H^{\infty} = \cap_{t}H^{t} \subset C^{*}_{r}(G)$. Actually, $ \cH^{\infty} $ coincides with the ideal of regularizing operators introduced in \cite{Vassout2006}.

\medskip\noindent\textbf{Fourier integral $G$-operators.}
Another remarkable subclass of $G$-operators is given by that of lagrangian distributions on $G$ with respect to arbitrary $G$-relations. We call them Fourier integral $G$-operators ($G$-FIO) and we set for a given $G$-relation $\Lambda$:
 \begin{equation}
  \Phi^*_G(\Lambda)  = I^{*+(n^{(1)} - n^{(0)})/4}(G,\Lambda;\Omega^{1/2}).
 \end{equation}
 where here $I$ refers to the space of Lagrangian distributions.
The convolution product gives a map
\begin{equation}
  \Phi^*_{G,c}(\Lambda_1)\times \Phi^*_{G}(\Lambda_2)\to \Phi^*_{G}(\Lambda_1\Lambda_2)
\end{equation}
as soon as $\Lambda_1 \times \Lambda_2$ has a clean intersection with $T^*G^{(2)}$. This proves in particular that $\Phi^*_G(\Lambda)$ is a bimodule over 
 $\Psi^*_{G,c}$ and that $F^{-1}PF\in \Psi^*_{G,c}$ if $P\in \Psi^*_{G,c}$ and $F\in \Phi^*_{G,c}(\Lambda)$ is invertible. Also, when $\Lambda$ is invertible, one gets $ \Phi^{0}_{G,c}(\Lambda) \subset \cM(C^{*}_{r}(G))$ and $ \Phi^{ < 0}_{G,c}(\Lambda) \subset C^{*}_{r}(G)$. In general, if $A\in  \Phi^*_{G}(\Lambda)$, the corresponding family   $(A_{x})_{x\in M}$ consists of operators $A_{x}$ given by locally finite sums of oscillatory integrals and when $\Lambda$ is transversal to $T_{L}^{*}G$, for any $L = s^{-1}(\cO)$ and $\cO \in M/G$, (this is for instance the case if $\Lambda$ is invertible), then each $A_{x}$ is a genuine Fourier integral operator on the manifold $G_{x}$. All the statements here about $G$-FIOs are proved in \cite{LV1}. 

\section{The one parameter group $e^{-itP},\ t\in \RR$} \label{sec:generalite-E-itP}

Before analyzing evolution equations on groupoids, we study the functional analytic aspects of them in a reasonably general and simple framework. So, let us consider the Cauchy problem:
\begin{equation}\label{eq:cauchy-pb}
\begin{cases}
(\frac{\partial}{\partial t} +iP) u = f \\
 u(0) = g 
\end{cases}
\end{equation} 
in the following situation: $P$ is a regular self-adjoint operator on $H$ \cite{BaajPhD,Baaj1983,skand-cours2015,Vassout2006} where  $H$ is a Hilbert module over some $C^*$-algebra $A$. It turns out that under natural assumptions on $f$ and $g$, this problem has a unique solution given in term of the operator $e^{-itP}$. This operator is first defined in term of the unbounded continuous functional calculus for regular operators \cite[Paragraph 14.3.3]{skand-cours2015}.  We recall that any  nondegenerate representation
 \[ 
   \pi : C_0(\RR) \longrightarrow\Mor(H)
 \]
extends into a map $\widetilde{\pi}$ from $C(\RR)$ (viewed as regular operators on $ C_0(\RR)$)  to the set of regular operators on $H$. The map   $\widetilde{\pi}$ is defined through the identification $ C_0(\RR) \otimes_{\pi} H\simeq H$ and the formula: 
$$ \widetilde{\pi}(f) = f\otimes_{\pi}  \Id. $$
Moreover, there exists a unique such representation $\pi$ such that  $\widetilde{\pi}(\Id_{\RR}) = P$ and we fix this particular one from now on.   Introducing  $f_{t}\in C(\RR)$, $f_t(\lambda)=e^{-it\lambda}$, we set: 
\[
 e^{-itP} = \widetilde{\pi}(f_t).
\]
Actually, the restriction of $  \widetilde{\pi}$  to $C_{b}(\RR)$ is a strictly continuous homomorphism \cite[Proposition 5.19]{skand-cours2015} : 
$$ \overline{\pi} = \widetilde{\pi}|_{C_{b}(\RR)} : C_b(\RR) \longrightarrow \cL(H). $$
Here, strict continuity refers to the topologies of $C_b(\RR)$ and $ \Mor(H)$ as multiplier algebras of $C_0(\RR)$ and $\cK(H)$ respectively.
The map $\RR\ni t\mapsto f_t\in C_b(\RR)$ being strictly continuous, the map  $\RR\ni t\to  e^{-itP}\in \cL(H)$ is thus strictly continuous too. Specializing the semi-norms giving the strict topology to rank one operators, this means that 
$ t\longmapsto e^{-itP}x  \in C(\RR,H)$. The following properties are valid: 
\begin{equation}
 e^{-i(t+s)P}= e^{-itP}e^{-isP}
\end{equation}
and defining $(Ef)(t) = e^{-itP}f(t)$, $t\in\RR$,  for $f\in C_b(\RR,H)$ we also get: 
\begin{equation}
    E \in \cL( C_b(\RR,H) ).
\end{equation}

To further analyse  $e^{-itP}$, we introduce the sequence of Hilbert $A$-modules associated to $P$:  
$$\forall s\in \RR_{+}, \ H^{s} = \dom (1+P^{2})^{s/2} \text{ and } H^{0} = H$$
Note that $H^{k} = \dom P^{k}$ for $k\in\NN$. The Hilbertian structure of $H^{s}$ is given by:
$$ \langle u , v \rangle_s =   \langle (1+P^{2})^{s} u , v \rangle.$$
For negative order $s$, we define $H^{s}$ to be the completion of $\dom P$ with respect to the prehilbertian structure given by the scalar product above. 
We refer to this family of Hilbert $A$-modules as the \emph{intrinsic scale of Sobolev modules of $P$}. It  was introduced in \cite{Vassout2006} in the framework of groupoid $C^{*}$-algebras.

We recall that $\widetilde{\pi}(f_{t}(\lambda)\lambda^{k}) = \widetilde{\pi}(f_{t}(\lambda))\widetilde{\pi}(\lambda^{k})$ and that $\widetilde{\pi}(\lambda^{k}) = P^{k}$ for any  $k\ge 0$, therefore:
\[
e^{-itP}(H^{k}) = H^{k} \text{ and } e^{-itP}P^k =P^ke^{-itP}.
\]
In particular, we get $e^{itP}\in \cL(H^{k})$ and  $ t\longmapsto e^{-itP}x \in C(\RR,H^{k})$ for any $x\in H^k$. 
Since $\frac1t(e^{-it\lambda}-1) \overset{t\to 0}{\longrightarrow} i\lambda$ uniformly on compact subsets of $\RR$, we get  using \cite[Appendix]{deb-skand2015} that 
 $\frac1t(e^{itP}-1)$ converges to $iP$ strongly,  that is, 
 \[
   \| \frac1t(e^{itP}x-x)-iPx \|_H \overset{t\to 0}{\longrightarrow} 0 , \qquad \text{ for all } x\in H^{1}.
 \]
Therefore   
\begin{equation}
 \forall x\in H^{1},\quad  (t\longmapsto e^{-itP}x )\in C^{1}(\RR,H)\cap C^{0}(\RR,H^{1})
\end{equation}
and 
\begin{equation}
 \forall x\in H^{1},\ \forall t\in \RR, \quad  \frac{d}{d t}e^{-itP}x = -iPe^{-itP}x . 
\end{equation}
Repeating the previous arguments gives for any natural number $k$:
\begin{equation}
 \forall x\in H^{k},\quad  (t\longmapsto e^{-itP}x )\in \bigcap_{0\le j\le k} C^{j}(\RR,H^{k-j}).
\end{equation}
This eventually implies: 
\begin{equation}
 \forall x\in H^{\infty},\quad  (t\longmapsto e^{-itP}x )\in \bigcap_{k} C^{\infty}(\RR,H^{k}) =:  C^{\infty}(\RR,H^{\infty}),
\end{equation}
where $H^{\infty} = \cap_{k} H^{k}$ has Frechet space structure given by the seminorms $\| \cdot \|_{H^{k}}$, $k\ge 0$.
We can now state the result:
\begin{thm}\label{prop:Cauchy}
Let $k$ be a positive integer. For any  $f\in C^{k-1}(\RR, H^k)$  and $g\in H^k $, the Cauchy problem:
\begin{equation}\label{eq:cauchy-pb-2}
\begin{cases}
(\frac{\partial}{\partial t} +iP) u = f \\
 u(0) = g  
\end{cases}
\end{equation} 
has a unique solution in $\bigcap_{0\le j\le k} C^{j}(\RR,H^{k-j})$, given by  
\begin{equation}\label{sol-cauchy-complete}
  u(t) = e^{-itP}g + \int_0^t e^{i(s-t)P}f(s)ds.
\end{equation}
\end{thm}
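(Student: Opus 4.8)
The plan is to treat the two parts of the solution formula separately and then assemble them. First, consider the homogeneous problem $f=0$, $u(0)=g$. The earlier discussion already established that for $g\in H^{k}$ the curve $t\mapsto e^{-itP}g$ lies in $\bigcap_{0\le j\le k}C^{j}(\RR,H^{k-j})$, that $\frac{d}{dt}e^{-itP}g=-iPe^{-itP}g$, and that $e^{-i0P}=\Id$; hence it solves the homogeneous Cauchy problem. So the only real content about the formula \eqref{sol-cauchy-complete} concerns the Duhamel term $v(t)=\int_0^t e^{i(s-t)P}f(s)\,ds$. I would first check that this integral makes sense: writing $v(t)=e^{-itP}\int_0^t e^{isP}f(s)\,ds$, the integrand $s\mapsto e^{isP}f(s)$ is a continuous curve in $H^{k}$ (composition of the strongly continuous family $e^{isP}$ on $H^k$ with $f\in C^{k-1}(\RR,H^k)\subset C^0(\RR,H^k)$), so the Bochner-type integral of a continuous Banach-space-valued curve exists in $H^{k}$; then apply the bounded operator $e^{-itP}\in\cL(H^k)$.

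Next I would differentiate $v$. Since $e^{itP}\in\cL(H^k)$ with $t\mapsto e^{itP}x\in C^1(\RR,H^{k-1})$ for $x\in H^k$, and $f\in C^{k-1}\subset C^1$ when $k\ge 2$ (the case $k=1$ I would handle by noting $C^0(\RR,H^1)$ is enough to get $v\in C^1(\RR,H^0)$ by the fundamental theorem of calculus for the inner integral, differentiating the product $e^{-itP}\cdot(\text{integral})$), a Leibniz-rule computation gives $v'(t)=-iP e^{-itP}\int_0^t e^{isP}f(s)\,ds + e^{-itP}e^{itP}f(t) = -iPv(t)+f(t)$, and $v(0)=0$. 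Iterating this, using that each application of $P$ drops the Sobolev index by one and costs one derivative, yields $v\in\bigcap_{0\le j\le k}C^{j}(\RR,H^{k-j})$; care is needed to commute $P$ past $e^{-itP}$ and past the integral, which is justified by $e^{-itP}P^\ell=P^\ell e^{-itP}$ on $H^\ell$ and by closedness of $P$ (to pull $P$ inside the Bochner integral). Adding the two pieces gives that \eqref{sol-cauchy-complete} is a solution in the claimed space.

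For uniqueness, suppose $u\in\bigcap_{0\le j\le k}C^{j}(\RR,H^{k-j})$ solves \eqref{eq:cauchy-pb-2} with $f=0$, $g=0$; it suffices to show $u\equiv 0$. Set $w(t)=e^{itP}u(t)$, a $C^1$ curve in $H^{0}$ since $u(t)\in H^1$ and $u\in C^1(\RR,H^0)\cap C^0(\RR,H^1)$. Then $w'(t)=iPe^{itP}u(t)+e^{itP}u'(t)=e^{itP}(iPu(t)+u'(t))=0$, so $w$ is constant equal to $w(0)=u(0)=0$, whence $u(t)=e^{-itP}w(t)=0$. The main obstacle I anticipate is purely the justification of moving the unbounded operator $P$ in and out of the $t$-integral and of the limit defining the derivative in the Hilbert-module setting — there is no dominated convergence theorem or Riesz representation to lean on as in the Hilbert-space case, so one must argue via continuity of the integrand in the appropriate $H^k$, boundedness of $e^{\pm itP}$ between the $H^s$, and closedness of $P^\ell$ to interchange it with the Bochner integral; once those interchanges are licensed, the computation is the classical Duhamel argument verbatim.
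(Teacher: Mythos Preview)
Your existence argument is essentially the paper's: the homogeneous term is handled by the material developed before the theorem, and for the Duhamel term the paper simply writes ``straightforward arguments'' where you have spelled out the Bochner-integral and Leibniz-rule details. Your worries about commuting $P$ past the integral are the right ones to flag, and your proposed justifications (closedness of $P$, boundedness of $e^{\pm itP}$ on each $H^s$) are exactly what makes the computation go through.

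The one genuine difference is in the uniqueness argument. You set $w(t)=e^{itP}u(t)$ and compute $w'(t)=0$ via the product rule, which is the standard back-propagation trick. The paper instead uses an energy method: pairing $(\partial_t+iP)u=0$ with $u$ and its adjoint, and using self-adjointness of $P$ to obtain $\partial_t\langle u,u\rangle=0$, hence $\|u(t)\|_H^2=\|u(0)\|_H^2=0$. Both arguments are short and classical; the paper's version avoids having to justify the product rule for $t\mapsto e^{itP}u(t)$ (which requires $u\in C^0(\RR,H^1)\cap C^1(\RR,H^0)$ and strong differentiability of $e^{itP}$ on $H^1$), at the cost of invoking self-adjointness explicitly. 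Your route has the mild advantage of giving uniqueness in the slightly larger class where only $C^1(\RR,H^0)\cap C^0(\RR,H^1)$ is assumed, without appealing to the $A$-valued inner product at all.
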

\begin{proof}
That $ e^{-itP}g$ is  in the required space and satisfies the equation when $f=0$ is done before the statement of the theorem.  Straightforward arguments prove that the second term in the expression of $u(t)$ in \eqref{sol-cauchy-complete} is in the required space too, and it is then obvious that  $u$ solves \eqref{eq:cauchy-pb}. For unicity, consider the case $f=g=0$ and let $u$ be a solution.  Pairing the equation with $u$ on both sides gives the relations
\begin{align*}
   -i\langle \frac{\partial}{\partial t}u, u \rangle + \langle Pu , u \rangle & = 0 \\
    i\langle  u,\frac{\partial}{\partial t}u \rangle + \langle u , Pu \rangle & = 0. 
   \end{align*}
Since $P$ is selfadjoint, substracting both relations gives
\begin{eqnarray*}
 \frac{\partial}{\partial t} \langle u, u\rangle  =  \langle \frac{\partial}{\partial t}u, u \rangle +  \langle  u,\frac{\partial}{\partial t}u \rangle = 0.
\end{eqnarray*}
Therefore, $ \|u(t)\|_H^2 =\|\langle u(t), u(t)\rangle\|_A = \|\langle u(0), u(0)\rangle\|_A =0$ for any $t$. 
\end{proof}
Keeping the previous setting, let $B$ be a $C^{*}$-algebra,  $L$ be a Hilbert $B$-module and $\lambda : A\longrightarrow \cL(L)$ be a representation. Then $  H_{\lambda} = H \otimes_{\lambda} L$ is a Hilbert $B$-module and $ P_{\lambda} = P \otimes_{\lambda} \Id$ is a selfadjoint regular operator acting on it. Then, Proposition \ref{prop:Cauchy} applies to $P_{\lambda} $ and we get the following corollary, using  \cite[14.3.2]{skand-cours2015}.
\begin{cor}
\begin{equation}\label{eq:rep-mod}
  \text{ for } k > 0, \quad \dom P_{\lambda}^{k} = H^{k}_{\lambda}
\end{equation}
and we have the equality:
\begin{equation}\label{eq:rep-exp}
    e^{itP_{\lambda}} =  e^{itP}  \otimes_{\lambda} \Id. 
\end{equation}
\end{cor}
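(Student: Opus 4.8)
The plan is to deduce both identities from the compatibility of the unbounded continuous functional calculus of selfadjoint regular operators with interior tensor products of Hilbert modules, which is exactly the content of \cite[14.3.2]{skand-cours2015}; once this is in place, nothing further is needed.

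First I would make explicit the representation of $C_{0}(\RR)$ attached to $P_{\lambda}$. Let $\pi : C_{0}(\RR)\to\Mor(H)$ be the fixed nondegenerate representation with $\widetilde{\pi}(\Id_{\RR})=P$, and set $\pi_{\lambda}(g)=\pi(g)\otimes_{\lambda}\Id_{L}\in\cL(H_{\lambda})$ for $g\in C_{0}(\RR)$. This is a $*$-homomorphism valued in $\Mor(H_{\lambda})$, and it is nondegenerate: $\pi(C_{0}(\RR))H$ is dense in $H$ and elementary tensors are dense in $H_{\lambda}=H\otimes_{\lambda}L$, whence $\pi_{\lambda}(C_{0}(\RR))H_{\lambda}$ is dense in $H_{\lambda}$. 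By \cite[14.3.2]{skand-cours2015}, $\pi_{\lambda}$ is precisely the representation attached to $P_{\lambda}=P\otimes_{\lambda}\Id$, and for every $g\in C(\RR)$ (viewed as a regular operator on $C_{0}(\RR)$) one has
\begin{equation*}
 \widetilde{\pi_{\lambda}}(g)=\widetilde{\pi}(g)\otimes_{\lambda}\Id,\qquad \dom\widetilde{\pi_{\lambda}}(g)=\dom\widetilde{\pi}(g)\otimes_{\lambda}L,
\end{equation*}
the latter being an equality of Hilbert $B$-modules (graph norm on the left, interior tensor product on the right); for $g\in C_{b}(\RR)$ this specializes to $\overline{\pi_{\lambda}}(g)=\overline{\pi}(g)\otimes_{\lambda}\Id$ in $\cL(H_{\lambda})$.

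The two assertions then follow by suitable choices of $g$. Taking $g=f_{-t}$, $f_{-t}(\lambda)=e^{it\lambda}$, gives $e^{itP_{\lambda}}=\overline{\pi_{\lambda}}(f_{-t})=\overline{\pi}(f_{-t})\otimes_{\lambda}\Id=e^{itP}\otimes_{\lambda}\Id$, which is \eqref{eq:rep-exp}. Taking $g(\lambda)=\lambda^{k}$ gives $P_{\lambda}^{k}=\widetilde{\pi_{\lambda}}(\lambda^{k})=P^{k}\otimes_{\lambda}\Id$ together with $\dom P_{\lambda}^{k}=\dom P^{k}\otimes_{\lambda}L$; since $\dom P^{k}=H^{k}$ as Hilbert $A$-modules (discussion preceding Theorem \ref{prop:Cauchy}), the right-hand side is by definition $H^{k}_{\lambda}=H^{k}\otimes_{\lambda}L$, which is \eqref{eq:rep-mod}. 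Finally Theorem \ref{prop:Cauchy} applies verbatim to the selfadjoint regular operator $P_{\lambda}$ on $H_{\lambda}$ over $B$.

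The only point requiring care is the domain identification in \eqref{eq:rep-mod}: one must know that $\dom P_{\lambda}^{k}$, equipped with its graph norm, agrees with the interior tensor product $H^{k}\otimes_{\lambda}L$ not merely as a set but as a Hilbert $B$-module, which is exactly what the cited statement encodes. If one prefers not to invoke it in that strength, the same conclusion can be reached directly by rerunning, with $\widetilde{\pi_{\lambda}}$ in place of $\widetilde{\pi}$, the elementary computation preceding Theorem \ref{prop:Cauchy} that identifies $\dom P^{k}=\dom(1+P^{2})^{k/2}=H^{k}$, using the now evident identity $(1+P_{\lambda}^{2})^{k/2}=(1+P^{2})^{k/2}\otimes_{\lambda}\Id$.
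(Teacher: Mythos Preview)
Your proposal is correct and follows exactly the approach indicated in the paper: the corollary is stated there with no explicit proof, only the remark that it follows from Proposition~\ref{prop:Cauchy} applied to $P_{\lambda}$ together with \cite[14.3.2]{skand-cours2015}. You have simply unpacked what that citation provides (compatibility of the unbounded functional calculus with interior tensor products) and made the specializations $g=f_{-t}$ and $g(\lambda)=\lambda^{k}$ explicit, which is precisely the intended argument.
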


\section{Distributions, test functions and weak factorizations for a Lie groupoid}\label{sec:dist-test-facto}

 From now on, and in the remaining parts of this article,  we fix a  Lie groupoid $G$ of dimension $n = n^{(1)} + n^{(0)}$ with compact basis $G^{(0)}=M$ of dimension $n^{(0)}$.
We recall that:
 $$ \Omega^{1/2} := \Omega^{1/2}_{s}\otimes \Omega^{1/2}_{r} = \Omega^{1/2}(r^{*}AG) \otimes\Omega^{1/2}( s^{*}AG) \simeq \Omega^{1/2}(\ker ds) \otimes \Omega^{1/2}(\ker dr).$$ 
and that the bundle $\Omega^{1/2}_{0}$  used in the space of test functions $ \cD(G) = C^{\infty}_{c}(G, \Omega^{1/2}_{0})$ satisfies:
\begin{equation}
\Omega^{1/2}_{0} := \Omega^{-1/2}\otimes\Omega^{1}_{G} \simeq \Omega^{1/2}_{s}\otimes\Omega^{-1/2}_{r}\otimes s^{*}\Omega_{M} \simeq  \Omega^{1/2}_{r}\otimes\Omega^{-1/2}_{s}\otimes r^{*}\Omega_{M} \simeq   r^{*}\Omega^{1/2}_{M}\otimes s^{*}\Omega^{1/2}_{M} .
\end{equation}  
All the isomorphisms above are easily checked using the isomorphisms
$$\Omega^{\alpha}_{G} \simeq \Omega^{\alpha}_{s}\otimes s^{*}\Omega^{\alpha}_{M} \simeq  \Omega^{\alpha}_{r}\otimes r^{*}\Omega^{\alpha}_{M}$$ 
that result from the exact sequences: 
$$ 0 \To \ker d\sigma \To TG  \overset{d\sigma}\To \sigma^{*}TM \To 0, \qquad \sigma =s,r $$
as well as straight properties of the calculus of densities. To finish with this description, we mention that $\Omega^{1/2}_{0}$ is related, but distinct, to the transverse density bundle $\cD^{\tr}_{AG}$ of \cite{Crainic-Mestre2019}. The latter is $G$-invariant and serves to produce geometric transverse measures useful for the geometry of groupoids and stacks, while our choice of \enquote{transverse} bundle is  required for the pairing with densities in $\Omega^{1/2}$, but only equivariant with respect to $\RR$-actions provided by invariant vectors fields.

Moreover, besides its pairing with distributions, the space $\cD(G)$ appears to be a bimodule over $ C^{\infty}_{c}(G)$, with left and right multiplication given by the canonically defined integrals:
\begin{equation}
f \star \xi (\gamma) = \int_{G_{s(\gamma)}}f(\gamma\alpha^{-1})g(\alpha) \text{ and }  \xi \star f(\gamma) = \int_{G_{s(\gamma)}}\xi (\gamma\alpha^{-1}) f(\alpha).
\end{equation}
 Finally, we recall that the embedding $C^{\infty}_{c}(G)\subset \cD'(G)$ is given  by:
\begin{equation*}
 \forall u \in C^\infty_c(G),\omega\in \cD(G), \quad \langle u, \omega \rangle = \int_G u(\gamma)\omega(\gamma)d\gamma.
 \end{equation*}
The inversion map $\iota : G\to G$ acts on sections of $\Omega^{1/2}$ and $\Omega^{1/2}_{0}$ in the natural way. This gives involutive isomorphisms:
\begin{equation}\
\iota^{*} : \cD(G) \To \cD(G) \text{ and } \iota^{*} : C^{\infty}_{c}(G) \To C^{\infty}_{c}(G).
\end{equation}
The second one extends to an involutive isomorphism $\iota^{*}:\cD'(G) \To \cD'(G)$.
\begin{prop}\
\begin{enumerate}
\item
For any $(u,\omega)\in \cD'_{r,s}(G)\times \cD(G)$, we have 
\begin{equation}
\langle u, \omega \rangle =   \langle \delta_{M},  \iota^{*}u \star \omega \rangle = \langle \delta_{M}, \omega \star \iota^{*}u\rangle  = \langle \iota^{*}u, \iota^{*}\omega \rangle  \text{ (trace property). }
\end{equation}

The trace property $\langle u, \omega \rangle = \langle \iota^{*}u, \iota^{*}\omega \rangle $ is still valid with $u\in \cD'(G)$.  
\item The map $\iota^{*}$ is an anti-isomorphism of the algebra $\cE'_{r,s}(G)$: 
\begin{equation}\label{eq:iota-anti-iso}
\forall u,v \in \cE'_{r,s}(G), \quad  \iota^{*} (u \star v) = \iota^{*}v \star \iota^{*}u,
\end{equation}
\item  The space $\cD(G)$ is a bimodule over $\cE'_{r,s}(G)$ and $\iota^{*}$ is a bimodule antisomorphism: 
\begin{equation*}
\forall u,v \in \cE'_{r,s}(G), \forall \omega\in \cD(G), \quad  \iota^{*} (u \star \omega \star v) = \iota^{*}v \star \iota^{*}\omega \star  \iota^{*}u,
\end{equation*}
\item For any $u,v \in \cE'_{r,s}(G)$ and $\omega\in \cD(G)$, we have:
\begin{align*}
 \langle u \star v, \omega \rangle & =  \langle  v, \iota^{*}u \star \omega \rangle =  \langle  u ,   \omega \star \iota^{*}v \rangle =   \langle  \delta_{M},   \iota^{*}u \star \omega \star \iota^{*}v  \rangle
\end{align*}
\end{enumerate}
\end{prop}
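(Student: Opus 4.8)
I would organise the proof so that everything rests on two ingredients: a single change of variables on $G$ — the inversion $\iota$ together with the flip between the $s$-fibres $G_{x}$ and the $r$-fibres $G^{x}$ — carried out at the level of smooth compactly supported sections, and then density of $C^{\infty}_{c}(G)$ combined with the continuity of the convolution established in \cite{LMV1}. Granting the $\cE'_{r,s}(G)$-bimodule structure on $\cD(G)$ of item (3), the remaining items follow quickly, and the antisomorphism formula of (3) is of the same elementary nature as (1) and (2).

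I would start with the identity $\langle \iota^{*}u,\iota^{*}\omega\rangle=\langle u,\omega\rangle$. For $u\in C^{\infty}_{c}(G)$ this is just the invariance of $\int_{G}$ under the diffeomorphism $\iota$, applied to the $1$-density $u\omega\in C^{\infty}_{c}(G,\Omega^{1}_{G})$; since $\iota^{*}$ on $\cD'(G)$ is by construction the transpose of $\iota^{*}$ on $\cD(G)$, the identity holds verbatim for every $u\in\cD'(G)$, settling the last claim of (1). As $\iota$ exchanges $r$ and $s$, the map $\iota^{*}$ preserves $\cD'_{r,s}(G)$, so all the objects in (1) make sense. For the two ``$\delta_{M}$'' formulas I would first take $u=f\in C^{\infty}_{c}(G)$: restricting the convolution integrals to the units and using the unit relations $x\alpha^{-1}=\alpha^{-1}$ and $\alpha x=\alpha$, one gets $(\iota^{*}f\star\omega)|_{M}(x)=\int_{G_{x}}f(\alpha)\omega(\alpha)$ and, after the change of variables $\alpha\mapsto\alpha^{-1}$ carrying $G_{x}$ to $G^{x}$, $(\omega\star\iota^{*}f)|_{M}(x)=\int_{G^{x}}f(\beta)\omega(\beta)$; integrating over $M$ and applying Fubini for the submersion $s$, resp.\ $r$, both yield $\int_{G}f\omega=\langle f,\omega\rangle$. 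The only delicate point is the bookkeeping of the structural density isomorphisms — checking that these integrands really are the $1$-densities on $G_{x}$, $G^{x}$ and then on $G$ that underlie $\langle f,\omega\rangle$ — which is done exactly as for the three equivalent formulas for $\star$ recalled in Section~\ref{sec:generalities}. Finally, for a general $u\in\cD'_{r,s}(G)$, for fixed $\omega$ both sides of each formula are continuous in $u$ — the right-hand sides because $(\iota^{*}u\star\omega)|_{M}$ has support inside $\supp{\omega}$'s image under $s$ and depends continuously on $u$ by \cite{LMV1} — and they agree on the dense subspace $C^{\infty}_{c}(G)$, hence on all of $\cD'_{r,s}(G)$.

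Item (2) is then immediate from the fact, recalled in Section~\ref{sec:generalities}, that $u\mapsto u^{*}=\overline{\iota^{*}u}$ is an involution of the algebra $(\cE'_{r,s}(G),\star)$: since complex conjugation is an automorphism of this algebra (the convolution integrals have ``real coefficients'') and $u\mapsto u^{*}$ an anti-automorphism, their composite $u\mapsto\iota^{*}u=\overline{u^{*}}$ is an anti-automorphism, i.e. $\iota^{*}(u\star v)=\iota^{*}v\star\iota^{*}u$; alternatively one checks this on $C^{\infty}_{c}(G)$ by the substitution $\gamma\mapsto\gamma^{-1}$ in the convolution integral and extends by separate continuity. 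For item (3), the substantive step is the bimodule structure: the two integrals displayed just before the Proposition, a priori defined for $u\in C^{\infty}_{c}(G)$, extend by continuity to $u\in\cE'_{r,s}(G)$ and still produce elements of $\cD(G)$ — compact support coming from the compactness of $\supp{u}$ and the properness of $m$ on such supports, smoothness coming from the transversality of $u$ with respect to both $r$ and $s$ — which is proved by the arguments of \cite{LMV1}; the associativity and unit axioms then pass to the limit from $C^{\infty}_{c}(G)$. The antisomorphism formula $\iota^{*}(u\star\omega\star v)=\iota^{*}v\star\iota^{*}\omega\star\iota^{*}u$ is checked on $C^{\infty}_{c}$-elements by the same substitution $\gamma\mapsto\gamma^{-1}$ and Fubini, then extended by separate continuity.

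Item (4) is then purely formal. First one records the trace property $\langle\delta_{M},a\star b\rangle=\langle\delta_{M},b\star a\rangle$, immediate from $\langle\delta_{M},a\star b\rangle=\int_{G}a(\alpha^{-1})b(\alpha)$ and $\alpha\mapsto\alpha^{-1}$. Since $\iota^{*}u\star\omega\in\cD(G)$ by (3), applying (1) to the pair $(v,\ \iota^{*}u\star\omega)$ and then (2) gives $\langle v,\iota^{*}u\star\omega\rangle=\langle\delta_{M},\iota^{*}v\star\iota^{*}u\star\omega\rangle=\langle\delta_{M},\iota^{*}(u\star v)\star\omega\rangle=\langle u\star v,\omega\rangle$; likewise, applying (1) to $(u,\ \omega\star\iota^{*}v)$ and then the trace property gives $\langle u,\omega\star\iota^{*}v\rangle=\langle\delta_{M},\omega\star\iota^{*}v\star\iota^{*}u\rangle=\langle\delta_{M},\iota^{*}(u\star v)\star\omega\rangle=\langle u\star v,\omega\rangle$; and the last expression $\langle\delta_{M},\iota^{*}u\star\omega\star\iota^{*}v\rangle$ reduces to $\langle\delta_{M},\iota^{*}(u\star v)\star\omega\rangle$ by the trace property, hence again to $\langle u\star v,\omega\rangle$ via (1). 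I expect the main obstacle to be item (3), namely proving that $\cE'_{r,s}(G)$ acts continuously on $\cD(G)$ with values in $\cD(G)$ with all the density identifications kept under control; once this is in place, everything else is a change of variables plus a density argument.
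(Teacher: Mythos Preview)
Your proposal is correct and follows essentially the same approach as the paper: establish the identities for smooth $u$ via the change of variables $\gamma\mapsto\gamma^{-1}$ and Fubini along the $s$-fibres, then extend to distributions by density and continuity, while deferring the bimodule structure on $\cD(G)$ to \cite{LMV1,Androulidakis-Skandalis}. Your write-up is simply more explicit than the paper's (which dispatches (2), (3), (4) with ``checked easily''), and your observation that the trace property $\langle\iota^{*}u,\iota^{*}\omega\rangle=\langle u,\omega\rangle$ for arbitrary $u\in\cD'(G)$ is immediate from the transpose definition of $\iota^{*}$ is a slight sharpening over the paper's density argument.
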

\begin{proof}
That $\cD(G)$ is a bimodule over   $\cE'_{r,s}(G)$ follows directly from \cite{Androulidakis-Skandalis,LMV1}. If $u$ is $C^{\infty}$, the quantity $\langle u, \omega \rangle$ is the integral of the one density on $G$ defined by  the product $u\omega$, whose integral is then invariant by action of diffeomorphisms. In particular, $\langle u, \omega \rangle = \int_{G} u(\gamma^{-1})\omega(\gamma^{-1})$. On the other hand, one is allowed to write 
$$ \langle u, \omega \rangle = \int_{M} \big(\int_{G_{x}} \iota^{*}u(\gamma^{-1})\omega(\gamma)\big) dx = \int_{M}  \iota^{*}u \star \omega (x) dx .$$
Both identities together give (1) when $u$ is $C^{\infty}$, and the general case follows by density and continuity. 
  The identities given in (2) and (3) are then checked easily. 
\end{proof}

Let  $X\in \Gamma(AG)$. Since $AG \subset TG$, the vector field $X$ provides at any $x\in M$ a local derivation $X_{x}: \cD(G)\to \Omega^{1}(T_{x}M)$ and $x\mapsto X_{x}\omega$ is $C^{\infty}$ for any $\omega\in\cD(G)$. Therefore $X\in \Gamma(AG)$ provides a distribution 
$$\tau_{X}\in \Diff(G)=\{ u\in \Psi^{*}_{G}\ ; \ \supp u \subset M \} \subset \Psi^{*}_{c}(G),$$

via the formula:
\begin{equation*}
\forall \omega\in C^{\infty}(G), \quad \langle \tau_{X} , \omega\rangle = \int_{M} X\omega. 
\end{equation*}
We recall that the algebra isomorphism
\begin{alignat}{2}\label{eq:iso-dist-Gop}
\op : & & (\cE'_{r,s}(G), \star) & \To (\Op_{G}^{*,c}, \circ) \\
        & &     u  & \longmapsto u\star \cdot \notag
\end{alignat}
maps $ \Psi^{*}_{G,c}$ (resp. $\Diff^{*}_{G}$) to the algebra of uniformly supported and equivariant $C^{\infty}$ family of pseudodifferential (resp.  equivariant $C^{\infty}$ family of differential) operators on the fibers of $s$ \cite{NWX,Monthubert2003,LMV1}.

Note that the action of $\tau_{X}$ as a differential $G$-operator is given, up to inversion, by the right invariant vector field $\widetilde{X}$ associated with $X$:
\begin{equation*}
\forall u \in C^{\infty}_{c}(G), \quad  \iota^{*}\tau_{X} \star u = \widetilde{X}u.
\end{equation*}
Let  $\varphi$ be the flow of the vector field  $\widetilde{X}$. By compacity of $M$, there exists $ \varepsilon > 0$ and a neighborhood $U$ of $M$ into $G$  such that $\varphi$ is defined on $] - \varepsilon, \varepsilon [\times U$. Since $\widetilde{X}_{\gamma}=dR_{\gamma}(X_{r(\gamma)})$ for any $\gamma$ we get the relation $\varphi(t,\gamma\eta)=\varphi(t,\gamma)\eta$ whenever  both terms are well defined. Therefore the flow $\varphi$ is well defined on   $] - \varepsilon, \varepsilon [\times G$, and then on $\RR\times G$ using the one parameter group property. This proves that  the flow of $\widetilde{X}$ is complete and commutes with right multiplication in $G$:
\begin{equation}
\forall t\in\RR, \forall \gamma,\eta\in G^{(2)},\quad  \varphi(t,\gamma)\in G_{s(\gamma)}\text{ and } \varphi(t,\gamma\eta) = \varphi(t,\gamma)\eta.
\end{equation}
In other words, $X$ provides an action of $\RR$ on the \emph{manifold} $G$, which is equivariant with respect to right multiplication. 
Also, the map $\psi := r\circ \varphi : \RR\times M \To M$ is the flow of the vector field $\mathfrak{a}(X)\in \Gamma(TM)$ where $\mathfrak{a} = d r\vert_{TM}$ is the anchor map of $G$ \cite{Mackenzie2005} and the map: 
$$ \varphi : \RR \ltimes_{\psi} M \To G, \quad (t,x) \longmapsto \varphi(t,x) $$
is a ($C^{\infty}$) homomorphism of groupoids over $M$. We recall that a groupoid homorphism $h : G_{1}\to G_{2}$ over (the identity map of) $X = G_{1}^{(0)} =  G_{2}^{(0)}$ is a map satisfying $h(\alpha\beta)=h( \alpha) h(\beta)$ whenever it makes sense and $r\circ h = r$, $s\circ h = s$. 

We record the following simple fact:
\begin{prop}\label{lem:push-star}
Let $G,H$ be two Lie groupoids with same units space $M$.
\begin{enumerate}
\item Let $h : G\To H$ a $C^{\infty}$ be a homomorphism over $M$. Then the pushforward map $h_{!}$ gives rise to a (unital, involutive) algebra homomorphism:
\begin{equation}
h_{!} : \cE'_{r,s}(G) \To \cE'_{r,s}(H).
\end{equation}
\item Let $h_{1},h_{2} : G\To H$ be two $C^{\infty}$ homomorphisms over $M$ and set $h_{12} := m\circ(h_{1}\otimes h_{2}) : G^{(2)}\To H$. Then for any $u,v\in \cE'_{r,s}(G)$, we have 
\begin{equation}
h_{1!}u \star h_{2!}v = h_{12!}( u\otimes v\vert_{G^{(2)}}). 
\end{equation}
\end{enumerate}
\end{prop}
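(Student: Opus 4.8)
\textbf{Proof strategy for Proposition \ref{lem:push-star}.}

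The plan is to reduce everything to the defining formulas for the convolution product and the pushforward of transversal distributions, using the fact that a groupoid homomorphism $h$ over $M$ commutes with $r$, $s$, the unit inclusion and the multiplication. First I would check that $h_{!}$ actually lands in $\cE'_{r,s}(H)$: since $h$ commutes with $r$ and $s$, the identity $\pi\circ h=\pi$ for $\pi=r,s$ gives $\pi_{!}(h_{!}u)=\pi_{!}u$ on the level of pushforwards (one must be a little careful with the density twists: $h$ being a submersion-free homomorphism over $M$, its differential restricted to $AG$ gives a bundle map $AG\to AH$ inducing the needed maps on the half-density bundles $\Omega^{1/2}$, so that $h_{!}$ is well-defined as a map $\cD'(G)\to\cD'(H)$ on the $\Omega^{1/2}$-valued distributions, and it preserves compact supports). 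Then transversality of $u$ for $r$ and $s$ transfers directly to $h_{!}u$, and $h_{!}$ being $C^{\infty}$-linear in the fibered sense it maps $C^{\infty}$-transversal distributions to $C^{\infty}$-transversal ones.

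Next, the algebraic content. Both statements are instances of the second one: taking $h_{1}=h_{2}=h$ in (2) gives, since $h_{12}=m_{H}\circ(h\otimes h)=h\circ m_{G}$ by the homomorphism property, the identity $h_{!}u\star h_{!}v=(h\circ m_{G})_{!}(u\otimes v|_{G^{(2)}})=h_{!}(u\star v)$, which is exactly multiplicativity of $h_{!}$ in (1); that $h_{!}$ sends the unit $\delta_{M}$ of $\cE'_{r,s}(G)$ to the unit $\delta_{M}$ of $\cE'_{r,s}(H)$ is immediate because $h\circ\upsilon_{G}=\upsilon_{H}$, and compatibility with the involution $u^{*}=\overline{\iota^{*}u}$ follows from $h\circ\iota_{G}=\iota_{H}\circ h$ together with the fact that $h_{!}$ commutes with complex conjugation. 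So the whole proposition comes down to establishing (2).

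For (2), I would argue by continuity and density: the convolution $\star$ on $\cE'_{r,s}$ is defined by continuous extension of the convolution on $C^{\infty}_{c}$, the pushforward maps $h_{i!}$, $h_{12!}$ are continuous for the relevant topologies, and $u\otimes v|_{G^{(2)}}$ depends continuously on $(u,v)$; hence it suffices to prove the identity for $u,v\in C^{\infty}_{c}(G)$. There it is a direct computation with the third (``$m^{-1}(\gamma)$-style'') formula in \eqref{nota:intuitive-star}: unwinding the definitions, $h_{1!}u\star h_{2!}v$ evaluated at $\delta\in H$ is an integral over $m_{H}^{-1}(\delta)\subset H^{(2)}$ of $(h_{1!}u)(\delta_{1})(h_{2!}v)(\delta_{2})$, which by the change-of-variables description of pushforward becomes an integral over the fiber product $\{(\gamma_{1},\gamma_{2})\in G^{(2)}\times_{?} \ ;\ h_{1}(\gamma_{1})h_{2}(\gamma_{2})=\delta\}$ of $u(\gamma_{1})v(\gamma_{2})$, i.e. precisely $h_{12!}(u\otimes v|_{G^{(2)}})(\delta)$; the half-density factors match because the bundle maps $dh_{i}:AG\to AH$ and the multiplication isomorphisms $\cM$ of \eqref{nota:cM} on $G$ and $H$ are intertwined by $h$. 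The main obstacle is bookkeeping the half-density twists through this chain of identifications — checking that the canonical isomorphisms defining $\Omega^{1/2}$ on $G$ and $H$, the pushforward convention, and the convolution convention are all compatible under $h$ — rather than any conceptual difficulty; once the densities are tracked honestly the underlying identity is just Fubini on the fiber products.
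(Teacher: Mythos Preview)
Your overall strategy is sound and reaches the right conclusions, but it is more laborious than necessary, and the one place where you are vague is exactly the point the paper treats most carefully.

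The paper's proof does not reduce to $C^{\infty}_{c}$ by density and then compute with the explicit integral \eqref{nota:intuitive-star}. Instead it uses directly the definition $u\star v = m_{!}(u\otimes v|_{G^{(2)}})$ together with functoriality of pushforward, $(f\circ g)_{!}=f_{!}g_{!}$. With those two ingredients, both (1) and (2) are one-line computations: for (2), $h_{1!}u\star h_{2!}v = m_{H!}\big((h_{1}\times h_{2})_{!}(u\otimes v)|_{H^{(2)}}\big) = (m_{H}\circ(h_{1}\otimes h_{2}))_{!}(u\otimes v|_{G^{(2)}}) = h_{12!}(u\otimes v|_{G^{(2)}})$, and (1) is the special case $h_{1}=h_{2}=h$ since then $h_{12}=h\circ m_{G}$. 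Your density-plus-Fubini argument is a valid alternative, just heavier; the paper's route avoids any explicit integral.

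The point you should revisit is the well-definedness of $h_{!}:\cD'(G)\to\cD'(H)$. You propose to use the bundle map $dh:AG\to AH$ to transport half-densities, but a linear map between vector spaces of possibly different dimensions does not induce a map on $\alpha$-densities in either direction unless it is an isomorphism; so this mechanism does not work in general. The paper's observation is cleaner and does not use $dh$ at all: since $\Omega^{1/2}_{0}\simeq r^{*}\Omega^{1/2}_{M}\otimes s^{*}\Omega^{1/2}_{M}$ and $r_{H}\circ h=r_{G}$, $s_{H}\circ h=s_{G}$, one has canonically $h^{*}\Omega^{1/2}_{0,H}=\Omega^{1/2}_{0,G}$, so for $\omega\in\cD(H)$ the composition $\omega\circ h$ lies in $\cD(G)$ and $\langle h_{!}u,\omega\rangle:=\langle u,\omega\circ h\rangle$ is canonically defined. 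Once $h_{!}$ is set up this way, your observation $\pi_{!}(h_{!}u)=\pi_{!}u$ for $\pi=r,s$ (which the paper does not spell out) is indeed the right reason $h_{!}$ preserves transversality.
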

\begin{proof}
 First of all, $h_{!} :  \cE'(G) \To \cE'(H)$ is well defined. Indeed, if $\omega\in \cD(H)$  and $u \in C^{\infty}_{c}(G)$, then $\omega(h(\gamma))\in \Omega^{1/2}_{M,r(h(\gamma))}\otimes \Omega^{1/2}_{M,s(h(\gamma))} = \Omega^{1/2}_{M,r(\gamma)}\otimes \Omega^{1/2}_{M,s(\gamma)}$ and therefore:
$$ \langle h_{!}u, \omega \rangle := \langle u, \omega\circ h \rangle = \int_{G} u(\gamma) \omega(h(\gamma)) $$
is canonically defined. The algebraic remaining assertions come from the identities: $m\circ(h\otimes h) = h\circ m$ on $G^{(2)}$, $h\iota= \iota h$ on $G$, from the functoriality of pushforwards: $f_{!}g_{!}=(fg)_{!}$, and from the definition of the convolution product of distributions: $u\star v = m_{!}(u\otimes v\vert_{G^{(2)}})$. 
\end{proof}
  
The goal now is to export to Lie groupoids (with compact unit spaces) a classic result by Dixmier and Malliavin about Lie groups \cite{Dix-Mal78}. This will be the main technical tool used to embed reduced $C^{*}$-algebras into distributions. 
\begin{thm}
Let $V$  be an open neighborhood of $M$ into $G$ and $\omega\in \cD(G)$. Then $\omega$ is a finite sum of elements: 
\begin{equation}\label{eq:weak-fact}
 \xi \star \chi 
\end{equation}
where $\xi \in C^{\infty}_{c}(G)$,   $\supp{\xi}\subset V$ and $\chi\in \cD(G)$, $\supp{\chi}\subset \supp \omega$.
The result is still valid with the factors flipped in the convolution above. 
\end{thm}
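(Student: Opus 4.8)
The plan is to reduce, by a partition of unity on the compact set $\supp{\omega}$, to the case where $\omega$ is supported in an arbitrarily small open set, and then to transport to $G$ the Dixmier--Malliavin factorization theorem \cite{Dix-Mal78} by means of a finite family of $\RR$-actions built from sections of $AG$. Since $M$ is compact and $AG\to M$ is a vector bundle, one chooses $X_{1},\dots,X_{N}\in\Gamma(AG)$ spanning $A_{x}G$ at every $x\in M$. Each right invariant vector field $\widetilde{X}_{j}$ has, as recalled before the statement, a complete flow $\varphi^{j}:\RR\times G\to G$ commuting with right translations, so $\varphi^{j}$ is a $C^{\infty}$ groupoid homomorphism $\RR\ltimes_{\psi^{j}}M\to G$ over $M$, with $\psi^{j}$ the flow of the complete vector field $\mathfrak{a}(X_{j})$ on the compact manifold $M$. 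This flow induces a one-parameter group $\theta^{j}$ of density-corrected pull-backs acting on the LF-space $\cD(G)$, with $\theta^{j}_{t}(\chi)$ supported in $\varphi^{j}_{t}(\supp{\chi})$; and for $a\in C^{\infty}_{c}(\RR\ltimes_{\psi^{j}}M)$ one has $\varphi^{j}_{!}a\in\cE'_{r,s}(G)$ by Proposition~\ref{lem:push-star}, acting on $\cD(G)$ as an average $(\varphi^{j}_{!}a)\star\chi=\int_{\RR}\theta^{j}_{t}(\chi)\,a(t)\,dt$; note that $\supp{\varphi^{j}_{!}a}\subset V$ whenever $a$ is supported in $(-\delta,\delta)\times M$ with $\delta$ small depending on $V$, by compactness of $M$.

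The core step is to apply, for each $j$, the one-dimensional Dixmier--Malliavin factorization to $\omega$ regarded as a smooth vector of the (locally defined) $\RR$-action $\theta^{j}$ on $\cD(G)$ --- one may fix a compact $K$ large enough that all supports that occur stay inside the Fréchet space $C^{\infty}_{K}(G)$. This writes $\omega$ as a finite sum of terms $(\varphi^{j}_{!}a_{i})\star\chi_{i}$, with $a_{i}\in C^{\infty}_{c}(\RR\times M)$ supported in $(-\delta,\delta)\times M$ and $\chi_{i}\in\cD(G)$, $\supp{\chi_{i}}\subset\supp{\omega}$. Iterating over $j=1,\dots,N$ and collapsing the convolutions by repeated use of Proposition~\ref{lem:push-star}, one expresses $\omega$ as a finite sum of terms $\Phi_{!}(a^{1}\otimes\cdots\otimes a^{N})\star\chi$, where $\Phi$ is the groupoid homomorphism obtained by multiplying $\varphi^{1},\dots,\varphi^{N}$ on the relevant fibered product of the groupoids $\RR\ltimes_{\psi^{j}}M$ over $M$, and $\chi\in\cD(G)$, $\supp{\chi}\subset\supp{\omega}$. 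Since the $X_{j}$ span $AG$, the differential of $\Phi$ along the unit section is surjective, hence $\Phi$ is a submersion near the units; therefore $\xi:=\Phi_{!}(a^{1}\otimes\cdots\otimes a^{N})$, a fibre integration of a smooth compactly supported density along the fibres of a submersion, belongs to $C^{\infty}_{c}(G)$, and $\supp{\xi}\subset V$ for $\delta$ small. This gives $\omega=\sum_{i}\xi_{i}\star\chi_{i}$ with $\xi_{i}\in C^{\infty}_{c}(G)$, $\supp{\xi_{i}}\subset V$, $\chi_{i}\in\cD(G)$, $\supp{\chi_{i}}\subset\supp{\omega}$.

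For the flipped statement, one applies the version just proved to $\iota^{*}\omega$ and the neighborhood $\iota(V)$ of $M$, obtaining $\iota^{*}\omega=\sum_{i}\xi_{i}\star\chi_{i}$, and pushes it through the involution: since $\iota^{*}$ reverses the convolution and preserves $\cD(G)$ and $C^{\infty}_{c}(G)$ (shown above), $\omega=\sum_{i}(\iota^{*}\chi_{i})\star(\iota^{*}\xi_{i})$ with $\iota^{*}\xi_{i}\in C^{\infty}_{c}(G)$ supported in $V$ and $\iota^{*}\chi_{i}\in\cD(G)$ supported in $\supp{\omega}$, which is the asserted form. I expect the main obstacle to be the \emph{sharp} support bound $\supp{\chi_{i}}\subset\supp{\omega}$, as opposed to mere support in a neighborhood of $\supp{\omega}$: this requires the precise Dixmier--Malliavin construction rather than just its statement, and after the localization it is cleanest to run that construction in local coordinates adapted to the flows $\varphi^{j}$. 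A secondary and routine point is the density and pushforward bookkeeping needed to confirm that $\varphi^{j}_{!}a$ and $\Phi_{!}(a^{1}\otimes\cdots\otimes a^{N})$ land in $\cE'_{r,s}(G)$ and $C^{\infty}_{c}(G)$ respectively, with the claimed supports.
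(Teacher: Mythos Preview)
Your proposal is correct and follows essentially the same route as the paper: pick generating sections $X_1,\dots,X_\ell$ of $AG$, apply the one-variable Dixmier--Malliavin construction to each associated $\RR$-action, iterate, and then collapse the resulting $\ell$-fold convolution via the map $\RR^\ell\times M\to G$, which is a submersion near the units so that the pushforward lands in $C^\infty_c(G)$. The paper isolates the one-variable step as a separate lemma and makes the sharp support bound explicit by writing $\omega_1=\sum(-1)^k\alpha_kD_X^{2k}\omega$ (so $\supp{\omega_1}\subset\supp{\omega}$ is automatic), which is precisely the ``precise Dixmier--Malliavin construction'' you anticipate needing; your initial partition-of-unity reduction is harmless but not actually used.
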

We adapt the proof of \cite[Theorem 3.1]{Dix-Mal78} to groupoids. Firstly, \cite[Lemma 2.5 and Remark 2.6]{Dix-Mal78} gives rise to: 
\begin{lem}\label{lem:step1}\
Let  $X\in \Gamma(AG)$ and $\varphi$, $\psi$  be the associated actions of $\RR$ on $G$ and $M$.  Let  $ \varepsilon > 0$. For any test function $\omega\in \cD(G)$, there exists $a_{1},b_{1}\in C^{\infty}_{c}(] - \varepsilon, \varepsilon [)\subset \cE'_{r,s}(\RR\ltimes_{\psi}M)$ and $\omega_{1} \in\cD(G)$ with $\supp{\omega_{1}}\subset \supp \omega$ such that
\begin{equation}\label{eq:step1}
\omega = \varphi_{!}a_{1} \star \omega_{1} + \varphi_{!} b_{1}\star \omega. 
\end{equation}
\end{lem}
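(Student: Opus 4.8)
The plan is to realise the operator $\omega\mapsto\varphi_{!}(a)\star\omega$ on $\cD(G)$ as the integrated action of the one--parameter group generated by the flow $\varphi$, and then to feed this into the one--dimensional factorisation lemma of Dixmier and Malliavin. So the argument splits into two distinct parts: a density--bookkeeping part, which is ours, and an abstract part, which is \cite[Lemma 2.5 and Remark 2.6]{Dix-Mal78}.

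For the first part: for $t\in\RR$ let $\varphi_{t}\colon G\to G$ be the (complete) diffeomorphism $\gamma\mapsto\varphi(t,\gamma)$. From right invariance of $\widetilde X$ one has $r\circ\varphi_{t}=\psi_{t}\circ r$ and $s\circ\varphi_{t}=s$, so $\varphi_{t}$ acts on sections of the bundle $\Omega^{1/2}_{0}$ and one gets a one--parameter group $\Phi_{t}=(\varphi_{t})_{*}\in\cL(\cD(G))$ of topological isomorphisms, with infinitesimal generator $-\cL_{\widetilde X}$. I would then establish the identity
\begin{equation*}
 \varphi_{!}(a)\star\omega=\int_{\RR}a(t)\,\Phi_{t}\omega\,dt,\qquad a\in C^{\infty}_{c}(\RR),\ \omega\in\cD(G).
\end{equation*}
The way to see this is: $a=\int_{\RR}a(t)\,\delta_{t}\,dt$ is a superposition of Dirac masses along the $s$--fibres of $\RR\ltimes_{\psi}M$, and applying the homomorphism $\varphi\colon\RR\ltimes_{\psi}M\to G$ over $M$, using Proposition~\ref{lem:push-star} together with the concrete description \eqref{nota:rigorous-star-s-style} of $\star$, one checks that $\varphi_{!}\delta_{t}$ acts on $\cD(G)$ exactly as $\Phi_{t}$ — this is the groupoid counterpart, integrated in $t$, of the identity $\iota^{*}\tau_{X}\star u=\widetilde X u$ recorded above. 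Thus $a\mapsto\varphi_{!}(a)\star\cdot$ is precisely the integrated representation attached to $(\Phi_{t})_{t}$, and similarly $\omega\mapsto\omega\star\varphi_{!}(a)$ is the integrated representation of the analogous right action.

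For the second part: given $\omega\in\cD(G)$ and $T>0$, the set $K:=\bigcup_{|t|\le T}\varphi_{t}(\supp\omega)$ is compact, so $t\mapsto\Phi_{t}\omega$ is a curve in the Fréchet space $C^{\infty}_{K}(G,\Omega^{1/2}_{0})$ which is $C^{\infty}$ there, all of its $t$--derivatives being iterated Lie derivatives of $\omega$ transported by the flow; in particular every orbit of $(\Phi_{t})_{t}$ is smooth. I would then apply \cite[Lemma 2.5]{Dix-Mal78} — whose proof combines a Taylor expansion of $t\mapsto\Phi_{t}\omega$ at $t=0$ with Borel's lemma to reabsorb the remainders — to the group $(\Phi_{t})_{t}$, the smooth vector $\omega$ and the given $\varepsilon>0$, and \cite[Remark 2.6]{Dix-Mal78} to control the support of the new vector. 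This yields $a_{1},b_{1}\in C^{\infty}_{c}(]-\varepsilon,\varepsilon[)$ and $\omega_{1}\in\cD(G)$ with $\supp{\omega_{1}}\subset\supp\omega$ such that
\begin{equation*}
 \omega=\int_{\RR}a_{1}(t)\,\Phi_{t}\omega_{1}\,dt+\int_{\RR}b_{1}(t)\,\Phi_{t}\omega\,dt=\varphi_{!}(a_{1})\star\omega_{1}+\varphi_{!}(b_{1})\star\omega.
\end{equation*}
The variant with the factors of the convolution exchanged follows verbatim, using the right $\cE'_{r,s}(G)$--module structure of $\cD(G)$ (equivalently, by transporting the above through $\iota^{*}$).

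I expect the main obstacle to be the first part: pinning down the identification $\varphi_{!}(a)\star\cdot=\int a(t)\,\Phi_{t}\,dt$ requires tracking the half--densities $\Omega^{1/2}$, $\Omega^{1/2}_{0}$ and the auxiliary density $\mu$ used to make $\star$ meaningful, and checking that, although $\mu$ need not be $\psi$--invariant, no spurious Jacobian of $\psi$ enters. A secondary delicate point is that the Dixmier–Malliavin construction a priori enlarges supports along the flow, so the sharp bound $\supp{\omega_{1}}\subset\supp\omega$ — rather than containment in a small neighbourhood of $\supp\omega$ — genuinely depends on the optimised form of their argument recorded in Remark 2.6, and one must check this refinement survives the transposition from function spaces on Lie groups to the section space $\cD(G)$ on $G$.
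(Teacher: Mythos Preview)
Your proof is correct and rests on the same Dixmier--Malliavin input as the paper, but the transport to $G$ is organised differently. You interpret $\varphi_!(a)\star\cdot$ as the integrated representation of the flow $(\Phi_t)_t$ on $\cD(G)$ and then invoke the Dixmier--Malliavin factorisation for that smooth $\RR$-action. The paper instead stays on the distributional side: it uses \cite[Lemma 2.5, Remark 2.6]{Dix-Mal78} to write, in $\cE'(\RR)\subset\cE'_{r,s}(\RR\ltimes_\psi M)$,
\[
\delta \;=\; a_1\star\sum_{k\ge0}(-1)^k\alpha_k\,\delta^{(2k)}\;+\;b_1,
\]
with the $\alpha_k$ chosen small enough that $\omega_1:=\sum_k(-1)^k\alpha_k D_X^{2k}\omega$ converges in $\cD(\supp\omega)$, and then simply pushes this identity forward by the algebra homomorphism $\varphi_!$ of Proposition~\ref{lem:push-star} and convolves with $\omega$. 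This buys two things: the support condition $\supp\omega_1\subset\supp\omega$ is immediate (it is a series of derivatives of $\omega$), and the density bookkeeping you correctly flag as your main obstacle disappears entirely --- one never needs the identification $\varphi_!(a)\star\cdot=\int a(t)\Phi_t\,dt$, only that $\varphi_!$ respects $\star$ and sends $\delta^{(k)}$ to the appropriate power of $D_X$. Your route has the merit of making the underlying smooth representation explicit, which is conceptually pleasant, but at the cost of the verification you anticipate.
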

\begin{proof}[Proof of the Lemma]
First of all, we pick up a sequence $(p_{j})$ of semi-norms characterizing the topology of $\cD(\supp \omega)$, and set $\beta_{k}=k^{-2}\inf\{ (p_{j}(D_{X}^{2i}\omega)+1)^{-1}\ ; \  i,j \le k \}$. Then the series $\sum (-1)^{k}\alpha_{k}D_{X}^{2k}\omega$ converges in $\cD(G)$ for any sequence $ 0\le \alpha_{k} \le \beta_{k}$. 
 Next, we choose by \cite[Lemma 2.5 and Remark 2.6]{Dix-Mal78}, two functions $a_{1},b_{1}\in C^{\infty}_{c}(] - \varepsilon, \varepsilon [)$  and a sequence $0 \le \alpha_{k}\le \beta_{k}$ such that 
\begin{equation}\label{eq:split-delta}
 \delta =  a_{1} \star \sum_{k=0}^{\infty} (-1)^k\alpha_k \delta^{(2k)}  + b_{1}= \sum_{k=0}^{\infty}(-1)^{k}\alpha_{k}a_{1}^{(2k)} +b_{1}  \text{ in } \cE'(\RR) \subset \cE'_{r,s}(\RR\ltimes_{\psi} M)
\end{equation}
Now (1) of Proposition \ref{lem:push-star} gives the identity \eqref{eq:step1}, with $\omega_{1} =\sum_{k=0}^{\infty} (-1)^{k}\alpha_{k}D_{X}^{2k}\omega$.
\end{proof}

\begin{proof}[Proof of the theorem]

Let $X_{1},\ldots,X_{\ell}$ be a family generating the $C^{\infty}(M)$-module $\Gamma(AG)$, and 
$$\varphi_{i} : G_{i} := \RR\ltimes_{\psi_{i}} M \to G$$
be  the associated homomorphisms.
Applying the lemma to $\omega$ with $\varphi=\varphi_{1}$, we get 
\begin{equation}\label{eq:magic-G-step1-again}
\omega =  \lambda_1  \star  \omega_{1}   +  \mu_1 \star \omega \text{ in } \cE'_{r,s}(G),
\end{equation}
with $\lambda_1,\mu_{1} $ in  $ \varphi_{1!}(C^{\infty}_{c}(] - \varepsilon, \varepsilon [))$. Applying the lemma  to $\omega$ and $\omega_{1}$ with $\varphi_{2}$  we get, with intuitive notation:
\begin{equation}\label{eq:magic-G-step2-1-again}
 \omega_{1} =   \lambda_{2,1}  \star \omega_{2,1}   +  \mu_{2,1} \star  \omega_{1}  \quad ; \quad \omega =   \lambda_2  \star \omega_{2}   +  \mu_2 \star \omega  \text{ in }  \cE'_{r,s}(G) .
\end{equation}
Inserting \eqref{eq:magic-G-step2-1-again} into \eqref{eq:magic-G-step1-again}, we get: 
\begin{equation}\label{eq:magic-G-step2}
\omega =   \lambda_1  \star   \lambda_{2,1}  \star  \omega_{2,1} +   \lambda_1  \star  \mu_{2,1} \star\omega_{1}  +  \mu_1 \star \mu_2  \star  \omega_{2}   +  \mu_1 \star  \mu_2 \star \omega \text{ in }  \cE'_{r,s}(G) ,
\end{equation}
where all the $\lambda_{j\bullet},\mu_{j\bullet}$ are in the range of  $ C^{\infty}_{c}(] - \varepsilon, \varepsilon [)$ by $\varphi_{j!}$, $j=1,2$, and all $\omega_{\bullet}$ are test functions with support in $\supp \omega$. 

Repeating the argument with  $\varphi_{3!}$, \ldots,  $\varphi_{\ell!}$ we get that $ \omega$ is equal to a sum of $2^{\ell}$ distributions of the form: 
\begin{equation}\label{eq:magic-G-stepl}
  \xi_{1} \star  \xi_{2} \star \cdots  \star \xi_{\ell } \star \chi 
\end{equation}
where $\xi_{j} = \varphi_{j!}(k_{j})\in \cD'(G)$ for some $k_{j}\in C^{\infty}_{c}(]- \varepsilon, \varepsilon [)$ and $\chi \in \cD(G)$ with  $\supp \chi \subset \supp \omega$. 
Setting as in Proposition \eqref{lem:push-star}:
\begin{equation}
\varphi = \varphi_{1\cdots\ell}:  \RR^{\ell}\times M\simeq G_{1}\underset{M}\times \cdots \underset{M}\times G_{\ell}  \longrightarrow G 
\end{equation}
and after an obvious induction, we get 
 \begin{equation}\label{eq:is-it-Cinfty}
  \xi_{1}\star \xi_{2}   \star \cdots  \star  \xi_{\ell}  = \varphi_{!} k\quad  \text{ with }\quad  k = k_{1}\otimes \cdots \otimes k_{\ell} \in C^{\infty}_{c}(]- \varepsilon, \varepsilon [^{\ell}).
\end{equation}
Since $\partial_{t_{j}}\varphi(t,x)\vert_{t=0} = X_{j}(x)$ and $\varphi(0,x) = x$  for any $1\le j \le \ell$ and $x\in M$, we get that $\varphi$ is a submersion on  $]-\varepsilon,\varepsilon[^{\ell}\times M$ if $ \varepsilon >0$ is small enough. Since the push forward of a $C^{\infty}$ distribution by a submersion is $C^{\infty}$, we get that $\varphi_{!} k$:
\begin{equation}\label{eq:it-is-Cinfty}
 \forall \eta \in \cD(G),\quad \langle  \varphi_{!} k , \omega \rangle = \int_{\RR^{\ell}\times M} k(t)\eta(\varphi(t,x)) dt dx,
 \end{equation}
 is $C^{\infty}$ and supported in  $\varphi(]-\varepsilon,\varepsilon[^{\ell}\times M)$. Taking $ \varepsilon> 0$ small enough ensures that this last set is contained in $V$. 
  \end{proof}

\section{Embedding $C^{*}_{r}(G)$ into $\cD'(G)$ and regularizing operators} \label{sec:Sobolev}
 From now on and in the remaining parts of this article, we fix a compactly supported, first order elliptic pseudodifferential $G$-operator  $P\in \Psi^{1}_{G,c}$ and  we denote by $C^{*}_{r}(G)$  the reduced $C^{*}$-algebra of $G$.  

\begin{thm}
There is a continuous embedding:
\begin{equation}
C^{*}_{r}(G) \hookrightarrow \cD'(G)
\end{equation}
that extends the  pairing:
\begin{equation}
\forall u \in C^{\infty}_{c}(G), \ \forall \omega\in \cD(G), 
\quad \langle u\ , \ \omega \rangle = \int_{M} \iota^{*}u\star\omega
\end{equation}
\end{thm}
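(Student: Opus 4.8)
The plan is to extend the pairing $\langle u,\omega\rangle=\int_G u\omega=\int_M\iota^*u\star\omega$ (the two expressions coincide for $u\in C^\infty_c(G)$ by part $(1)$ of the Proposition above) from $C^\infty_c(G)$ to $C^*_r(G)$ by peeling off a smooth factor from $\omega$ via the weak factorization theorem and then routing the remaining factor through the left regular representation. I would first introduce the Hilbert $C(M)$-module $\cH$ obtained by completing $\cD(G)=C^\infty_c(G,\Omega^{1/2}_0)$ for the inner product $\langle\chi_1\mid\chi_2\rangle_s(x)=\int_{G_x}\overline{\chi_1}\chi_2$; this makes sense because $\Omega^{1/2}_0\simeq\Omega^{1/2}_s\otimes s^*\big(\Omega^{-1/2}(AG)\otimes\Omega_M\big)$, so that $\overline{\chi_1}\chi_2$ restricted to an $s$-fibre is a $1$-density on that fibre twisted by a fixed line, which is trivialized once and for all using an everywhere positive section over $M$. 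Fixing moreover an everywhere positive section of $\Omega^1(AG)$ identifies $\cH$, $C^\infty_c(G)$ and $C^\infty_c(G,\Omega^{1/2}_s)$ with a common space of scalar functions and the various convolutions with one fixed scalar convolution; in particular $\cH\simeq L^2_s(G)$ as Hilbert $C(M)$-modules, intertwining the left bimodule action $f\star\cdot$ of $C^\infty_c(G)$ on $\cD(G)$ with the left regular representation $\lambda$. Hence this action extends to a bounded action of $C^*_r(G)$ on $\cH$ with $\|a\star\chi\|_\cH\le\|a\|_{C^*_r(G)}\|\chi\|_\cH$. Finally, since $M$ is compact, each $\xi\in C^\infty_c(G)$ defines, by Cauchy--Schwarz along the $s$-fibres followed by integration over $M$, a bounded functional $\langle\xi,\cdot\rangle$ on $\cH$ prolonging the pairing $\cD(G)\to\CC$.

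The algebraic heart of the argument is the identity, for $a,\xi\in C^\infty_c(G)$ and $\chi\in\cD(G)$,
\[
   \langle a,\xi\star\chi\rangle \;=\; \langle\xi,\; a\star\iota^*\chi\rangle ,
\]
the right-hand side being the pairing with $\xi$ of $a\star\iota^*\chi\in\cD(G)\subset\cH$. I would derive it from the Proposition: part $(1)$ gives $\langle a,\xi\star\chi\rangle=\langle\delta_M,\iota^*a\star\xi\star\chi\rangle$; regrouping as $(\iota^*a\star\xi)\star\chi$ and applying part $(1)$ once more (using that $\iota^*$ is an anti-automorphism) gives $\langle a,\xi\star\chi\rangle=\langle\iota^*\xi\star a,\chi\rangle$; then part $(4)$, the trace property $\langle u,\omega\rangle=\langle\iota^*u,\iota^*\omega\rangle$, and the bimodule anti-isomorphism property of $\iota^*$ from part $(3)$ turn this into $\langle\xi,a\star\iota^*\chi\rangle$. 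The crucial observation is that the right-hand side still makes sense, and depends continuously on $a$, for $a\in C^*_r(G)$: indeed $a\mapsto a\star\iota^*\chi$ is continuous from $C^*_r(G)$ into $\cH$ by the bound above, and $\langle\xi,\cdot\rangle$ is continuous on $\cH$.

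With this in hand, for $a\in C^*_r(G)$ and $\omega\in\cD(G)$ I would pick a weak factorization $\omega=\sum_i\xi_i\star\chi_i$ ($\xi_i\in C^\infty_c(G)$, $\chi_i\in\cD(G)$) and set $\langle a,\omega\rangle:=\sum_i\langle\xi_i,\;a\star\iota^*\chi_i\rangle_\cH$. This is independent of the chosen factorization: any two candidate values are continuous functions of $a\in C^*_r(G)$ agreeing on the dense subspace $C^\infty_c(G)$, where both equal $\int_G a\omega$; hence they agree for all $a$. In particular the assignment is linear in $\omega$ and extends the original pairing, and the estimate $|\langle a,\omega\rangle|\le C_\omega\,\|a\|_{C^*_r(G)}$ (with $C_\omega=\sum_i\|\langle\xi_i,\cdot\rangle\|_{\cH'}\,\|\iota^*\chi_i\|_\cH$) follows at once. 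For fixed $a$, the functional $\omega\mapsto\langle a,\omega\rangle$ is the pointwise limit, over any sequence $u_n\to a$ in $C^*_r(G)$ with $u_n\in C^\infty_c(G)$, of the continuous functionals $\omega\mapsto\int_G u_n\omega$; since $\cD(G)$ is barrelled, Banach--Steinhaus gives $\langle a,\cdot\rangle\in\cD'(G)$, and the above estimate shows $a\mapsto\langle a,\cdot\rangle$ is continuous $C^*_r(G)\to\cD'(G)$. For injectivity, if $\langle a,\cdot\rangle=0$ then $\langle\xi,\,a\star\iota^*\chi\rangle_\cH=0$ for all $\xi\in C^\infty_c(G)$ and $\chi\in\cD(G)$; the functionals $\langle\xi,\cdot\rangle$ separate points of $\cH$ and $\iota^*\cD(G)$ is dense in $\cH$, so $a\star\cdot$ vanishes on $\cH$, i.e. $\lambda(a)=0$, hence $a=0$.

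The step I expect to be the main obstacle is the very first one: constructing $\cH$ and the isometric identification $\cH\simeq L^2_s(G)$ compatibly with all the convolution actions — that is, the density-bundle bookkeeping which makes $\|a\|_{C^*_r(G)}$ appear precisely as the operator norm governing the bound on $a\star\iota^*\chi$. This is exactly where the particular choice of the bundles $\Omega^{1/2}$ and $\Omega^{1/2}_0$ pays off, as announced in the introduction; once this framework is in place, the remaining points (the chain of identities from the Proposition, the factorization-independence, and the Banach--Steinhaus argument) are routine.
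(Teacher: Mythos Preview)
Your proposal is correct and follows essentially the same route as the paper: weak factorization of $\omega$, a trace-property rearrangement to isolate the $C^*_r(G)$-element acting via the left regular representation on an $L^2_s$-type module (the paper arrives at $\langle v,\xi\star\chi\rangle=\int_G(\iota^*v\star\xi)\cdot\iota^*\chi$ whereas you land on $\langle a,\xi\star\chi\rangle=\langle\xi,a\star\iota^*\chi\rangle$, which is the same identity read from the other side), and Cauchy--Schwarz along the $s$-fibres to produce the $\|\cdot\|_{C^*_r(G)}$-bound. You add checks the paper leaves implicit---factorization-independence, Banach--Steinhaus for $\omega$-continuity, and injectivity---and the density bookkeeping you flag as the main obstacle is handled in the paper exactly as you anticipate, by fixing positive sections of $\Omega^1(AG)$ and $\Omega^1_M$ to pass to $L^2_s(G)$.
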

\begin{proof}
Let $ v,\xi \in C^{\infty}_{c}(G)$ and $\chi\in \cD(G)$. We have: 
\begin{align}\label{eq:Cstar-dist-1}
 \langle v , \xi  \star  \chi \rangle &  =   \langle \iota^{*}\xi \star  v , \chi  \rangle  \notag\\
  &  =    \langle \iota^{*} v \star \xi ,   \iota^{*}\chi  \rangle  \text{ (trace property)}  \notag\\
  & =  \int_{G}  \iota^{*} v\star\xi(\alpha) \iota^{*}\chi(\alpha).
 \end{align}
 Let $\mu$ and $\mu_{0}$ be positive sections of, respectively, the degree $1$ densities bundles of $AG$ and $TM$. We define 
 $\mu_{r}\in C^{\infty}(G, \Omega^{1}_{r})$ and  $\mu_{s,0}\in C^{\infty}(G,s^{*}\Omega^{1}_{M})$ by 
 \begin{equation}
 \mu_{r}(\gamma)= \mu(s(\gamma)) \quad \text{ and } \quad \mu_{s,0}(\gamma)=\mu_{0}(s(\gamma)).
 \end{equation}
 We observe: 
 $$ (\iota^{*} v\star\xi) .\mu^{-1/2}_{r} =  \iota^{*} v\star \xi' \in C^{\infty}_{c}(G,\Omega^{1/2}_{s}) \text{ with } \xi' = \xi \mu^{-1/2}_{r} \in C^{\infty}_{c}(G,\Omega^{1/2}_{s})$$
 and 
 $$ \chi' = \chi .\mu^{1/2}_{r}.\mu^{-1}_{s,0} \in C^{\infty}_{c}(G,\Omega^{1/2}_{s}). $$
 This allows us to write
 \begin{align}\label{eq:Cstar-dist-intermed}
 \langle v , \xi  \star  \chi \rangle = \int_{G}  \iota^{*} v\star\xi(\alpha) \iota^{*}\chi(\alpha) & =  \int_{M}\Big( \int_{G_{x}} \iota^{*} v\star\xi'(\alpha) \iota^{*}\chi'(\alpha)\Big)d\mu_{0}  
 \end{align}
 and to use the Cauchy Schwarz inequalities for the Hilbert spaces $(L^{2}(G_{x},\Omega^{1/2}_{G_{x}}), \|\cdot\|_{x})$ in the following computations:
\begin{align}\label{eq:Cstar-dist-2}
  \vert \langle v , \xi  \star  \chi \rangle \vert & \le  \int_{M}d\mu_{0}. \sup_{x \in M} \vert \int_{G_{x}} \iota^{*} v\star\xi'(\alpha) \iota^{*}\chi'(\alpha) \vert   \notag \\
  & \le  c_{M}\sup_{x \in M} \| \iota^{*} v\star\xi' \|_{x}  \| \iota^{*}\chi' \|_{x}   \notag \\
  & \le  c_{M}  \| \iota^{*} v  \|_{C^{*}_{r}(G)}  \| \xi' \|_{L^{2}_{s}(G)}  \| \iota^{*}\chi' \|_{L^{2}_{s}(G)}  \notag\\
  & \le  c  \| v  \|_{C^{*}_{r}(G)}
\end{align}
Now let  $\omega \in \cD(G)$  and pick up a weak factorisation $\omega = \sum_{j} \xi_{j}\star \chi_{j}$. Let $ u \in C^{*}_{r}(G)$ and  choose a sequence $(u_{k})$ with $u_{k}\in  C^{\infty}_{c}(G)$ and $u_{k}\to u$ in $C^{*}_{r}(G)$. Using the previous estimates, we see that the sequence $\langle u_{k}, \omega\rangle\in \CC$ satisfies the Cauchy criterium and thus converges. Setting $u(\omega)=\lim_{k\to +\infty}\langle u_{k}, \omega\rangle$ with get that $u\in\cD'(G)$ and that $u_{k}\to u$ in $\cD'(G)$. 
\end{proof}
We now give some complements to the properties of the regularizing operators:
\begin{align}
  \Psi^{-\infty}_{G} & :=\{ R \in \cL(C^*_r(G))\ ;\  R\in \cL(H^s,H^t) \text{ for all } s,t\in\NN\} \\
                             & = \{ R \in \cL(C^*_r(G))\ ;\  P_1RP_2\in \cL(C^*_r(G) ) \text{ for all }P_j\in \Psi^{s_j}_{G,c},\   s_j\in\NN,\ j=1,2\} .
 \end{align}
introduced exaclty in this form in \cite{Vassout2006} and in an equivalent form in \cite{LMN2005}. In both references, this ideal of the $C^{*}$-closure of $\Psi_{G,c}$ is proved to be stable under holomorphic functional calculus. 
Here  $H^{s}$ denotes the  scale of intrinsic Sobolev $C^{*}_{r}(G)$-modules. 
\begin{prop} 
Operators in $\Psi^{-\infty}_{G}$ are exaclty convolution operators by elements of  $\cH^{\infty}$. In other words, as subsets of the multipliers algebra  $\cM(C^*_r(G))$, these sets coincide:
$$\Psi^{-\infty}_{G} = \cH^{\infty} \subset \cM(C^*_r(G)). $$
\end{prop}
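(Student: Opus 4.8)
The plan is to prove the two inclusions $\cH^{\infty}\subset\Psi^{-\infty}_{G}$ and $\Psi^{-\infty}_{G}\subset\cH^{\infty}$ separately, both as subsets of $\cM(C^{*}_{r}(G))$, using the identification of $\cH^{\infty}$ with $H^{\infty}\cap(H^{\infty})^{*}$ and the characterization of $\Psi^{-\infty}_{G}$ via mapping properties on the intrinsic Sobolev modules $H^{s}$. The key mechanism is already recorded in the excerpt: any $Q\in\Psi^{m}_{G,c}$ induces a bounded homomorphism $Q\in\cL(H^{t},H^{t-m})$, and convolution by an element $a\in H^{t}$ is, by the very definition of the norms $\|\cdot\|_{t}$, a bounded operator $C^{*}_{r}(G)=H^{0}\to H^{t}$; one should also use $H^{\infty}=\bigcap_{t}H^{t}$ and that $(1+P^{2})^{s/2}$ is an isomorphism of the scale shifting degrees.

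\emph{Step 1: $\cH^{\infty}\subset\Psi^{-\infty}_{G}$.} Take $a\in\cH^{\infty}$, so $a\in H^{\infty}$ and $a^{*}\in H^{\infty}$. Left convolution by $a$, viewed as an operator $R_{a}$ on $C^{*}_{r}(G)$, extends to each $H^{s}$: given $f\in H^{s}$, write $f=(1+P^{2})^{-s/2}\star g$ with $g\in H^{0}$ (up to the usual closure/density argument), so $a\star f = a\star(1+P^{2})^{-s/2}\star g$; since $a\in H^{t'}$ for every $t'$ and convolution on the left by a fixed $H^{t'}$ element lands in $H^{t'}$ while right convolution by $(1+P^{2})^{-s/2}$ gains $s$ derivatives, one checks $R_{a}\in\cL(H^{s},H^{t})$ for all $s,t\in\NN$ (one needs $a^{*}\in H^{\infty}$ to control the domains of $P^{k}$ acting on the right, using adjunction $\langle a\star f\mid g\rangle=\langle f\mid a^{*}\star g\rangle$). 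Hence $R_{a}\in\Psi^{-\infty}_{G}$.

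\emph{Step 2: $\Psi^{-\infty}_{G}\subset\cH^{\infty}$.} Let $R\in\Psi^{-\infty}_{G}$. First show $R$ is convolution by a distribution: by hypothesis $R\in\cL(C^{*}_{r}(G))$ maps $H^{s}$ to $H^{t}$ for all $s,t$, in particular it restricts to $\cL(H^{0},H^{1})\subset\cL(C^{*}_{r}(G))$, and because $H^{1}\hookrightarrow H^{0}$ is compact one sees $R$ is a limit of compactly supported smoothing operators; more directly, applying $R$ to an approximate unit $(e_{\lambda})\subset C^{\infty}_{c}(G)$ and using $R(f\star g)=R(f)\star g$ (the $G$-operator axiom, which holds because $R$ is a multiplier of the convolution algebra) gives $R(f)=R(e_{\lambda})\star f$ in the limit, so $R=R_{a}$ with $a:=\lim R(e_{\lambda})$. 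Now $a=R(\lim e_{\lambda})$ lies in $H^{t}$ for every $t$ since $R\in\cL(H^{0},H^{t})$ and $e_{\lambda}\to\delta_{M}$-type unit stays bounded in $H^{0}$; thus $a\in H^{\infty}$. For $a^{*}\in H^{\infty}$ one repeats the argument with $R^{*}$, which also lies in $\Psi^{-\infty}_{G}$ because the defining conditions (composition with $\Psi^{s_{j}}_{G,c}$ elements on both sides being bounded) are manifestly $*$-symmetric, using that $\Psi^{s}_{G,c}$ is $*$-closed. Hence $a\in\cH^{\infty}$ and $R=R_{a}$.

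The main obstacle I expect is \emph{Step 2}, specifically the passage from the abstract operator-theoretic description of $\Psi^{-\infty}_{G}$ to an actual convolution kernel in $\cD'(G)$: one must justify that an element of $\cL(C^{*}_{r}(G))$ satisfying the Sobolev mapping conditions is automatically a multiplier that commutes with right convolution (i.e. is a $G$-operator in the sense of \eqref{eq:axiom-G-op}), and then invoke the isomorphism $\cD'_{r}(G)\cong\Op_{G}$ from \cite{LMV1} together with the newly established embedding $C^{*}_{r}(G)\hookrightarrow\cD'(G)$ to see the kernel $a$ is genuinely in $H^{\infty}\subset C^{*}_{r}(G)$ rather than merely a distribution. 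The bookkeeping with domains of $P^{k}$ acting on the right factor (which is what forces the hypothesis $a^{*}\in H^{\infty}$, equivalently $a\in(H^{\infty})^{*}$) is the delicate routine part, but the conceptual content is that $\Psi^{-\infty}_{G}$ was \emph{defined} in \cite{Vassout2006} precisely so as to be this two-sided smoothing ideal, so the equality is essentially an unwinding of definitions once the embedding theorem above is in place.
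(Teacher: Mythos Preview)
Your Step~1 is fine and matches what the paper calls ``obvious''. The real difference is in Step~2, where you take a significant detour compared with the paper.

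The paper's argument for $\Psi^{-\infty}_{G}\subset\cH^{\infty}$ is a two-line computation. It first invokes the standard identification $\cK(C^{*}_{r}(G))=C^{*}_{r}(G)$ (compact adjointable operators on a $C^{*}$-algebra viewed as a Hilbert module over itself are exactly left multiplications by elements of the algebra). Since any $T\in\Psi^{-\infty}_{G}$ factors through the compact inclusion $H^{1}\hookrightarrow H^{0}$, one has $T\in\cK(C^{*}_{r}(G))=C^{*}_{r}(G)$ directly, with no need to produce a kernel by hand. Then for each $k$ the operator $(1+P^{2})^{k}T$ again lies in $\Psi^{-\infty}_{G}$ (it still maps every $H^{s}$ to every $H^{t}$), hence in $C^{*}_{r}(G)$; writing $T=(1+P^{2})^{-k}\bigl((1+P^{2})^{k}T\bigr)$ shows $T\in H^{2k}$, and the same with $T(1+P^{2})^{k}$ gives $T\in(H^{2k})^{*}$.

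Your approximate-unit route is not wrong in spirit, but as written it has a gap: from $R(e_{\lambda})\star f\to R(f)$ you only get \emph{strict} convergence of $R(e_{\lambda})$ in $\cM(C^{*}_{r}(G))$, not norm convergence, so the existence of $a:=\lim R(e_{\lambda})$ in $C^{*}_{r}(G)$ is not justified. You patch this later by appealing to compactness and to the embedding $C^{*}_{r}(G)\hookrightarrow\cD'(G)$, but once you grant compactness you already have $R\in C^{*}_{r}(G)$ and the whole approximate-unit and $\cD'_{r}(G)\cong\Op_{G}$ discussion becomes unnecessary. In short: you identified the right ingredient (compactness of the Sobolev inclusion) but did not exploit $\cK(C^{*}_{r}(G))=C^{*}_{r}(G)$, which is precisely what collapses your Step~2 to the paper's two lines.
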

\begin{proof}
We know that $\Psi^{-\infty}_{G} \subset \cK(C^*_r(G))=C^*_r(G)$. Let $T \in \Psi^{-\infty}_{G}$. For any $k\in \NN$, we have:
$$ (1+P^2)^{k} T  = S_k \in C^*_r(G) \text{ and } T(1+P^2)^{k}  = S'_k \in C^*_r(G) $$
Then $T = (1+P^2)^{-k} S_k =S'_k (1+P^2)^{-k} \in  H^{2k}\cap (H^{2k})^{*}$ for any $k$, which proves the first inclusion. The second one is obvious.
\end{proof}

All the previous statements hold true for the maximal $C^{*}$-algebra of $G$ but we stay in the framework of the reduced $C^*$-algebra, because the embedding $C^{*}_{r}(G)\hookrightarrow \cD'(G)$ and the regular representation allow us to precise in what extent elements of $\Psi^{-\infty}_{G} = \cH^{\infty}$ are \emph{regularizing}.  For that purpose, we let $\Psi^{*}_{G}$ act not on the scale of intrinsic Sobolev modules $H^{s}$, but on their representation via the left regular representation. These $C(M)$-modules are concretely given as follows, for $k\in \ZZ$:
\begin{equation}
H^{k}_{s} =  \overline{C^{\infty}_{c}(G,\Omega^{1/2}_{s})}^{\langle \cdot  | \cdot \rangle_{k,s}}, \quad \langle \omega \ | \ \eta \rangle_{k,s}  =   \langle (1 + P^{2})^{k} \star \omega \ | \ \eta\rangle_{s}  \in C(M).
\end{equation}

\begin{lem} We have: 
\begin{equation}
H^{\infty}_{s} := \bigcap_{k\in\ZZ} H^{k}_{s} \subset C^{\infty,0}_{s}(G,\Omega^{1/2}_{s}) \text{ and }  H^{-\infty}_{s} := \bigcup_{k\in\ZZ} H^{k}_{s} \supset \cE'_{s,0}(G,\Omega^{1/2}_{s}).
\end{equation}
\end{lem}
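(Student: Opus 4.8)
The plan is to reduce both inclusions to fibrewise statements on the source fibres $G_x$, using that $H^k_s$ is, concretely, the continuous field over $M$ of the Sobolev spaces $H^k(G_x,\Omega^{1/2}_{G_x})$ attached to the elliptic operator $P_x$, and that the associated elliptic estimates on the $G_x$ are \emph{uniform} in $x\in M$ — a consequence of the right invariance of the family $(P_x)_{x\in M}$ together with the compactness of $M$ (compare \cite{NWX,Vassout2006}). The estimate to extract is that for every trivialising chart $\kappa:U\overset{\simeq}{\to}U_{(0)}\times U_{(1)}$ of $s$, every $U_{(1)}'\Subset U_{(1)}$ and every $\ell\in\NN$, there is a constant $C=C_{U,\ell}$ such that
\begin{equation*}
\sup_{x\in U_{(0)}}\|\omega_x\|_{H^{2\ell}(U_{(1)}')}\ \le\ C\,\sup_{x\in M}\|(1+P_x^2)^\ell\omega_x\|_{L^2(G_x)},\qquad \omega\in C^\infty_c(G,\Omega^{1/2}_s),
\end{equation*}
the right-hand side being exactly the $H^{2\ell}_s$-norm of $\omega$; this holds because $(1+P_x^2)^{-\ell}$ is of order $-2\ell$ with $x$-uniform symbol bounds, hence bounded $L^2(G_x)\to H^{2\ell}_{\mathrm{loc}}(G_x)$ uniformly in $x$.

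For the first inclusion, take $\omega\in H^\infty_s$ and fix $m\in\NN$; choose $\ell$ with $2\ell>m+n^{(1)}/2$, so $\omega\in H^{2\ell}_s$. Writing $\omega$ as an $H^{2\ell}_s$-limit of $\omega_n\in C^\infty_c(G,\Omega^{1/2}_s)$ and invoking the displayed estimate, the maps $x\mapsto(\omega_n)_x$ converge in $H^{2\ell}(U_{(1)}')$ uniformly in $x\in U_{(0)}$; hence $x\mapsto\omega_x$ is continuous from $U_{(0)}$ into $H^{2\ell}(U_{(1)}')$. Composing with the Sobolev embedding $H^{2\ell}(U_{(1)}')\hookrightarrow C^m(\overline{U_{(1)}''})$ for $U_{(1)}''\Subset U_{(1)}'$ shows that $\omega$, and all its $s$-fibre derivatives of order $\le m$, are continuous on $U$. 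As $U$ and $m$ are arbitrary, $\omega\in C^{\infty,0}_s(G,\Omega^{1/2}_s)$.

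For the second inclusion, let $u\in\cE'_{s,0}(G,\Omega^{1/2}_s)$ with $\supp{u}\subset K$ compact. Covering $K$ by finitely many trivialising charts of $s$ and using that $x\mapsto u_x$ is a continuous field of distributions supported in the fixed compact $K$, one gets an integer $N$ with $u_x\in H^{-N}(G_x,\Omega^{1/2}_{G_x})$ for all $x$ and $x\mapsto\|u_x\|_{H^{-N}(G_x)}$ bounded. Fix $\ell$ with $2\ell\ge N$. Then $(1+P^2)^{-\ell}$ acts fibrewise by $u_x\mapsto(1+P_x^2)^{-\ell}u_x\in H^{2\ell-N}(G_x)\subset L^2(G_x)$, with $x$-uniformly bounded norm and continuous dependence on $x$; so the resulting continuous $L^2$-field $v:=(x\mapsto(1+P_x^2)^{-\ell}u_x)$ lies in $L^2_s(G)=H^0_s$. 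Since $(1+P^2)^{-\ell}:H^{-2\ell}_s\to L^2_s(G)$ is an isometric isomorphism by the very definition of $\langle\cdot\,|\,\cdot\rangle_{-2\ell,s}$, the element $(1+P^2)^{\ell}\star v$ belongs to $H^{-2\ell}_s$ and restricts fibrewise to $u_x$; hence $u\in H^{-2\ell}_s\subset H^{-\infty}_s$.

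The main obstacle is the uniformity in $x\in M$ of the elliptic theory for the family $(P_x)$ and its compatibility with the Hilbert $C(M)$-module (equivalently, continuous-field) structure of the spaces $H^k_s$: the whole argument rests on the fact that $(1+P^2)^{\pm\ell}$ are $G$-operators, so that their fibrewise symbols and parametrices depend continuously (indeed smoothly) on $x$, which is precisely what upgrades fibrewise elliptic regularity into joint continuity on $G$ in the first inclusion, and fibrewise $L^2$-bounds into a statement inside the Hilbert module $L^2_s(G)$ in the second.
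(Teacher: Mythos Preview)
Your argument is correct, and for the first inclusion it is essentially the paper's: both localise in a trivialising chart of $s$ and use elliptic regularity uniformly in the base parameter; the only cosmetic difference is that the paper works with a differential Laplacian $\Delta_G=d^*d\in\Diff^2(G)$ and the Gårding inequality, while you use $(1+P^2)^{-\ell}$ and its uniform $L^2\to H^{2\ell}_{\mathrm{loc}}$ mapping properties.

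For the second inclusion the two proofs take genuinely different routes. The paper invokes the structure theorem for compactly supported distributions (\cite[Theorem 4.4.7]{Horm-classics}, extended to continuous families): one writes $u=\sum_I D_I u_I$ with $D_I\in\Diff^k_G$ and $u_I\in C_c(G,\Omega^{1/2}_s)\subset L^2_s(G)$, and concludes from $(1+P^2)^{-k/2}D_I\in\cL(L^2_s(G))$ that $u\in H^{-k}_s$. Your approach instead bounds the fibrewise Sobolev order of $u$, applies $(1+P_x^2)^{-\ell}$ fibrewise to land in $L^2_s(G)$, and then inverts. The paper's route is more elementary in that it uses only \emph{differential} $G$-operators acting on continuous functions, so the distributional and Hilbert-module actions coincide by inspection; your route leans on the enlarged calculus (that $(1+P^2)^{-\ell}\in\Psi^{-2\ell}_G$ acts compatibly on continuous families $\cE'_{s,0}$) and on identifying the abstract element $(1+P^2)^{\ell}v\in H^{-2\ell}_s$ with the original distribution $u$. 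Both identifications are true, but the first is not entirely free --- it uses that $(1+P^2)^{-\ell}$ is a $G$-operator in the sense of \cite{Vassout2006}, hence preserves continuity of fibrewise families --- and the second deserves a sentence (e.g.\ via the embedding $H^{-2\ell}_s\hookrightarrow \cD'_{s,0}$ dual to the dense inclusion $C^\infty_c\subset H^{2\ell}_s$). The trade-off is that your argument is conceptually uniform with the first inclusion, while the paper's avoids any appeal to the extended pseudodifferential calculus.
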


\begin{proof}[Proof of the lemma]
 Let $ \omega \in H^\infty_{s} $. Since pointwise multiplication operators by compactly supported $C^{\infty}$ functions are in $\Mor(H^k_{s})$ for any $k$, we can 
assume that $\omega$ is compactly supported in the domain $U$ of a local trivialization  $\kappa : U\to \RR^{n^{(1)}}\times \RR^{n^{(0)}}$, $\kappa(x)=(x',x'')$  of the submersion $s$.  By assumption, we have 
\begin{equation}
\forall k\ge 0, \quad \Delta^k_G \omega\in C^0_c(\RR^{n^{(0)}}, L^2(\RR^{n^{(1)}})). 
\end{equation}
Here $\Delta_G=d^*d\in \Diff^2(G)$ is the Laplacian associated with a given euclidean structure on $AG$. The ellipticity of  each term of the $C^\infty$ family $(\Delta_{G,x''})_{x''\in\RR^{n^{(0)}}}$ and the compactness of  $\supp \omega$ imply using usual Garding inequality that  $\omega \in C^0(\RR^{n^{(0)}}, H^{2k}( \RR^{n^{(1)}}))$ for any $k$,  where $H^*$ denotes here the usual Sobolev spaces of euclidean spaces. We then conclude that 
$\omega\in C^0(\RR^{n_0}, C^\infty(\RR^{n^{(1)}})) = C^{\infty,0}_{\pr_2}(\RR^{n^{(1)}}\times \RR^{n^{(0)}})$. This proves $H^{\infty}_{s} \subset C^{\infty,0}_{s}(G,\Omega^{1/2}_{s})$. \\
Let $u \in \cE'_{s,0}(G,\Omega^{1/2}_{s})$. The result \cite[Theorem 4.4.7]{Horm-classics} extends immediately to continuous family of distributions so  there exists $k\in \NN$  and  finite collections: $u_{I}\in C_{c}(G,\Omega^{1/2}_{s})$, $D_{I}\in \Diff^{k}_{G}$ such that 
\begin{equation}
 u = \sum_{I} D_{I}u_{I}. 
\end{equation}
Since $C_{c}(G,\Omega^{1/2}_{s}) \subset L^{2}_{s}(G)$ and $(1+P^{2})^{-k/2}D_{I}\in \cL(L^{2}_{s}(G))$, we then conclude that $u \in H^{-k}_{s}$.
\end{proof}

\medskip We recall  \cite{Moerdijk2003,Mackenzie2005} that for any $x\in M$, the orbit $\cO=r(s^{-1}(\{x\}))\subset M$ is an immersed submanifold, the map $r : G_{x} \To \cO$ is a submersion (actually a $G_{x}^{x}$ principal bundle) and that $G_{\cO} \rightrightarrows \cO$ is an immersed subgroupoid. We set: 
\begin{equation}
C^{\infty,0}_{\orb}(G,E) = \{ u \in C(G,E) \ ; \ \forall x\in M, \forall D\in\Diff(G),   Du \in C^{\infty}(G_{\cO},E),\ \cO=r(s^{-1}(\{ x\}) \}. 
\end{equation}

\begin{thm}
The following inclusions hold true:
\begin{equation}
C^{\infty,0}_{\orb}(G) \cap C_{c}(G) \subset \cH^{\infty} \subset  C^{\infty,0}_{\orb}(G)\cap C^{*}_{r}(G).
\end{equation}
\end{thm}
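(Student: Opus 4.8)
### Proof proposal

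The plan is to prove the two inclusions separately, the first being the more substantial one. Throughout, I would use the embedding $C^*_r(G)\hookrightarrow \cD'(G)$ just established, so that elements of $\cH^\infty \subset C^*_r(G)$ are genuine distributions on $G$ whose regularity I must now pin down orbit by orbit.

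\textbf{The inclusion $\cH^\infty \subset C^{\infty,0}_{\orb}(G)\cap C^*_r(G)$.} First, $\cH^\infty \subset C^*_r(G)$ holds by definition since $\cH^\infty = H^\infty\cap(H^\infty)^*$ and $H^\infty = \cap_t H^t \subset C^*_r(G)$. The point is the orbitwise smoothness. Fix an orbit $\cO = r(s^{-1}(\{x\}))$. The strategy is to restrict the whole picture to the subgroupoid $G_\cO \rightrightarrows \cO$: I would use that evaluation/restriction along $\cO$ corresponds, on the $C^*$-algebra level, to a morphism $C^*_r(G)\to C^*_r(G_\cO)$ (or work with the left regular representation on $L^2_s(G)$ and fix the fiber $G_x$, using that $r:G_x\to\cO$ is a $G_x^x$-principal bundle). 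An element $T\in\cH^\infty$ satisfies $(1+P^2)^k\star T\in C^*_r(G)$ and $T\star(1+P^2)^k\in C^*_r(G)$ for all $k$ (as in the Proposition identifying $\Psi^{-\infty}_G = \cH^\infty$). Via the left regular representation $\lambda(T)\in\cL(L^2_s(G))$, this says $\lambda(T)$ maps $L^2_s$ boundedly into $H^k_s$ for all $k$ and similarly on the right; composing with the lemma just proved, $H^\infty_s \subset C^{\infty,0}_s(G,\Omega^{1/2}_s)$, gives that $T$, viewed as a convolution kernel, has $C^\infty$ regularity along the $s$-fibers of $G_\cO$ (and, by the symmetric argument with $H^\infty_r$, along the $r$-fibers too). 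The ellipticity of the fiberwise Laplacian $\Delta_{G,x}$ on each $G_x$ upgrades $L^2$-boundedness after applying all powers of $(1+P^2)$ to full fiberwise smoothness, by the Garding argument from the lemma; applying differential $G$-operators $D\in\Diff(G)$ before doing this (they are convolution by compactly supported distributions and preserve $\cH^\infty$, since $\cH^\infty$ is an ideal in $\Psi^*_{G,c}+\cH^\infty$) shows $Du$ is again $C^\infty$ along the fibers of $G_\cO$, which is exactly membership in $C^{\infty,0}_{\orb}(G)$. Continuity of $T$ on all of $G$ follows since fiberwise-$C^\infty$ with control of all $s$- and $r$-derivatives, for an element of $C^*_r(G)\subset\cD'(G)$, forces joint continuity (Sobolev embedding in the fiber directions, uniform over $M$).

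\textbf{The inclusion $C^{\infty,0}_{\orb}(G)\cap C_c(G)\subset \cH^\infty$.} Here I would argue that a compactly supported $u$ which is $C^\infty$ along every orbit subgroupoid, together with all its fiber derivatives, defines an element of $C^*_r(G)$ with $\lambda(u)$ mapping $L^2_s$ into every $H^k_s$, hence into $\cap_k H^k = H^\infty$; the same on the right side (using $r$-fiber smoothness and $L^2_r$) gives $u\in(H^\infty)^*$, so $u\in\cH^\infty$. Concretely: the fiberwise operator $u_x$ on $G_x$ is convolution by a kernel that is $C^\infty$ and compactly supported, hence a smoothing operator on the manifold $G_x$, so $(1+\Delta_{G,x})^k u_x$ is again a smoothing operator with kernel estimates uniform in $x$ over the (compact) base — this uniformity is precisely what the $C^{\infty,0}_{\orb}$-hypothesis buys. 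Translating $(1+\Delta_G)^k$ into $(1+P^2)^k$ modulo lower-order $G$-PDOs (both are order-$2k$ elliptic compactly supported, and $\Psi^*_{G,c}$ preserves the relevant mapping properties on $H^*_s$), one concludes $(1+P^2)^k\star u\in C^*_r(G)$ for every $k$, i.e. $u\in H^{2k}$; symmetrically $u\in(H^{2k})^*$. Taking intersections over $k$ yields $u\in\cH^\infty$.

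\textbf{Main obstacle.} The delicate point is the first inclusion: making rigorous the passage from "$\lambda(T)$ bounded $L^2_s\to H^k_s$ for all $k$" to "$C^\infty$ regularity along the fibers of the orbit subgroupoid $G_\cO$ and its iterated $s,r$-derivatives", because $G_\cO$ is only an \emph{immersed} subgroupoid and restriction of distributions to it is not automatic. I expect the right way around this is not literal restriction but the observation that the fiberwise operators $T_x$ for $x\in\cO$ already live on the honest manifolds $G_x$, with $r:G_x\to\cO$ a principal bundle, so one works fiber by fiber (as in the lemma) and records that the resulting smooth kernels glue to a continuous function on $G_\cO$ that is smooth along $G_\cO$'s source fibers; the hypothesis closure under $\Diff(G)$ then propagates this to all fiber derivatives. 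Care is also needed that $\cH^\infty$, as an ideal, absorbs multiplication by the cutoff functions used to localize in a chart, exactly as in the lemma's proof.
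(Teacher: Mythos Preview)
Your overall strategy matches the paper's, and for the easy inclusion $C^{\infty,0}_{\orb}(G)\cap C_c(G)\subset\cH^\infty$ the argument is the same: this space is contained in $C^*_r(G)$ and stable under left and right convolution by $P$, hence lies in every $H^k\cap(H^k)^*$.

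For the hard inclusion there is a real gap in your write-up. From ``$\lambda(T)$ maps $L^2_s$ into every $H^k_s$'' you jump to ``$T$, viewed as a convolution kernel, has $C^\infty$ regularity along the $s$-fibers''. But mapping properties of $\lambda(T)$ concern $T\star f$, not $T$ itself; to extract regularity of $T$ you need a Schwartz kernel argument, and for that the domain of $\lambda(T)$ must contain compactly supported distributions, not just $L^2_s$. The paper uses \emph{both} halves of the preceding lemma: $\cE'_{s,0}\subset H^{-\infty}_s$ together with $H^\infty_s\subset C^{\infty,0}_s$ gives that $\lambda(u):\cE'_{s,0}\to C^{\infty,0}_s$ is continuous, so the fiberwise operator $u_x:\cE'(G_x)\to C^\infty(G_x)$ is smoothing and its Schwartz kernel $\kappa_x(\gamma_1,\gamma_2)=u(\gamma_1\gamma_2^{-1})$ lies in $C^\infty(G_x\times G_x)$.

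The passage to smoothness on the immersed $G_\cO$ is then done differently from what you sketch. Rather than combining separate $s$- and $r$-fiber regularity plus closure under $\Diff(G)$, the paper picks a local $C^\infty$ section $y\mapsto\eta_y$ of the principal bundle $r:G_x\to\cO$ with $\eta_x=x$ and uses right invariance: for $(\eta_1,\eta_2)$ with common source $y$ near $x$ one has $\kappa_y(\eta_1,\eta_2)=\kappa_x(R_{\eta_y}\eta_1,R_{\eta_y}\eta_2)$, which transports the smoothness of the single $\kappa_x$ to smoothness of $\kappa$ on $G_\cO\underset{s}{\times}G_\cO$, hence of $u$ on $G_\cO$. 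This is exactly the mechanism you anticipate in your ``main obstacle'' paragraph, but it replaces (and is cleaner than) the $s$-fiber/$r$-fiber patching you propose in the body of the argument.
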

In particular, since $\cH^{\infty}$ is an ideal in $C^{*}_{r}(G)$:
\begin{cor}
Any $h \in \cH^{\infty}$ provides continuous operators : 
\begin{equation*}
 h  \star \cdot :  C^{*}_{r}(G) \To C^{\infty,0}_{\orb}(G)\cap C^{*}_{r}(G) \text{ and }  \cdot \star\, h :  C^{*}_{r}(G) \To C^{\infty,0}_{\orb}(G)\cap C^{*}_{r}(G).
\end{equation*}
\end{cor}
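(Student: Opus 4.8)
The plan is to read the corollary as the conjunction of two assertions: that left and right convolution by $h$ are continuous for the $C^*_r(G)$-norm, and that their ranges already lie in the smaller space $C^{\infty,0}_{\orb}(G)\cap C^*_r(G)$. Both will follow at once from the two facts recorded immediately above the statement, namely that $\cH^\infty$ is a two-sided ideal of $C^*_r(G)$ and that $\cH^\infty\subset C^{\infty,0}_{\orb}(G)\cap C^*_r(G)$ (the preceding theorem). So the work is entirely in assembling these two inputs, and there is essentially nothing to compute.

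First I would dispose of continuity. Since $H^\infty\subset C^*_r(G)$ we have $h\in\cH^\infty\subset C^*_r(G)$, so $h\star\cdot$ and $\cdot\star h$ are nothing but left and right multiplication by a genuine algebra element. In any $C^*$-algebra these are bounded linear maps, with operator norm at most $\|h\|_{C^*_r(G)}$; hence they are continuous from $C^*_r(G)$ to $C^*_r(G)$, and a fortiori continuous into any subspace carrying the induced $C^*$-norm.

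Next I would pin down the range. Because $\cH^\infty$ is a two-sided ideal of $C^*_r(G)$, for every $a\in C^*_r(G)$ both products $h\star a$ and $a\star h$ belong to $\cH^\infty$. Invoking the preceding theorem, $\cH^\infty\subset C^{\infty,0}_{\orb}(G)\cap C^*_r(G)$, so both operators take values in $C^{\infty,0}_{\orb}(G)\cap C^*_r(G)$, as claimed. The two-sidedness of the ideal treats the left- and right-convolution cases on exactly the same footing, so no separate argument (nor any appeal to the involution $u^*=\overline{\iota^*u}$) is needed for $\cdot\star h$.

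The one genuine point to verify, and the one I would flag as the crux, is the consistency of the topology on the target: here \emph{continuous} must be understood with respect to the ambient $C^*_r(G)$-norm on $C^{\infty,0}_{\orb}(G)\cap C^*_r(G)$, not some finer Fréchet-type topology coming from the $C^{\infty,0}_{\orb}$ description. Under that reading the argument is immediate. Were one instead to demand continuity into a finer topology, one would have to track, through the factorization $h=(1+P^2)^{-k}S_k$ with $S_k\in C^*_r(G)$ used in establishing $\Psi^{-\infty}_{G}=\cH^\infty$, how the Sobolev-module norms of $h\star a$ are controlled by $\|a\|_{C^*_r(G)}$, a strictly more delicate estimate that the corollary as worded does not require.
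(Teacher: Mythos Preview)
Your proposal is correct and matches the paper's own justification exactly: the paper introduces the corollary with the single sentence ``In particular, since $\cH^{\infty}$ is an ideal in $C^{*}_{r}(G)$'', leaving the reader to combine this ideal property with the inclusion $\cH^{\infty}\subset C^{\infty,0}_{\orb}(G)\cap C^{*}_{r}(G)$ from the preceding theorem. Your added remarks on continuity (boundedness of multiplication in a $C^{*}$-algebra) and on the intended topology of the target space are reasonable clarifications of points the paper leaves implicit.
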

\begin{proof}[Proof of the theorem]
 Let $u \in \cH^{\infty}$. By  \cite{Vassout2006} and the left regular representation,  $u$ maps  $H^{-k}_{s}\to H^{k}_{s}$ continuously  for any $k\in \NN$. Therefore, the previous lemma implies that $u$ maps  $ \cE'_{s,0}(G,\Omega^{1/2}_{s}) \to C^{\infty,0}_{s}(G,\Omega^{1/2}_{s})$ continuously. In particular for every $x\in M$, the distribution $\kappa_{x}(\gamma_{1},\gamma_{2})=u(\gamma_{1}\gamma_{2}^{-1})\in \cD'(G_{x}\times G_{x},\Omega^{1/2}_{x})$ extends to a continuous map:
 \begin{equation}
 \kappa_{x} : \cE'(G_{x},\Omega^{1/2}_{G_{x}}) \to C^{\infty}(G_{x},\Omega^{1/2}_{G_{x}}) 
 \end{equation}
 which implies that $\kappa_{x}\in C^{\infty}(G_{x}\times G_{x},\Omega^{1/2}_{x})$ for fixed $x$. Next, consider $x\in M$, $\cO=r(s^{-1}(\{x\}))$ the orbit of $x$ in $M$ and fix $(\gamma_{1},\gamma_{2})\in G_{x}\times G_{x}\subset G\underset{s}\times G$. We denote by $\pi :  G\underset{s}\times G \To M$ the obvious submersion. Since $r : G_{x} \To \cO$ is a submersion, there exists a $C^{\infty}$ local section $\eta : U\in y \mapsto \eta_{y}\in G_{x}$ of $r$ such that $\eta_{x}=x$, defined on some open neighborhood  $U$ of $x$ into $O$. Then $V = \pi_{\cO}^{-1}(U) = \{ (\eta_{1},\eta_{2}) \ ; \ s(\eta_{1})=s(\eta_{2})\in U\}$ is an open neighborhood of $(\gamma_{1},\gamma_{2})$ into 
 $G_{\cO}\underset{s}{\times} G_{\cO}$ and we have:
 \begin{equation}
 \forall (\eta_{1},\eta_{2})\in V, \quad \kappa_{y}(\eta_{1},\eta_{2}) = \kappa_{x}(R_{\eta_{y}}\eta_{1},R_{\eta_{y}}\eta_{2}) 
 \end{equation}
which proves that $\kappa$ is $C^{\infty}$ on $G_{\cO}\underset{s}\times G_{\cO}$, and thus that $u$ is $C^{\infty}$ on $G_{\cO}$. 
It is clear that $C^{\infty,0}_{\orb}(G) \cap C_{c}(G)$ is  contained in $C^{*}_{r}(G)$ and is invariant under the left and right convolution by $P$. The inclusion $ C^{\infty,0}_{\orb}(G) \cap C_{c}(G) \subset \cH^{\infty}$ follows.
\end{proof}
Summarizing the content above, we have proved that regularizing operators are actually convolution operators by distributions on $G$ lying in the class $\cH^\infty$, the latter class being included in the class  of functions that are continuous on $G$ and infinitely differentiable over any orbit,  and thus in particular along the fibers of $s$ and $r$. Closely related results were obtained in \cite{LMN2005} under the assumption of bounded geometry for $G$. 
In the following sections, we are going to prove that $E(t)=e^{itP}$ is  a family $\RR\times G$-FIO \cite{LV1}, modulo such regularizing operators. 
 
\section{Principal and Subprincipal symbols of $G$-PDOs}\label{sec:subprinc}

As a conormal distribution, any  element of $\Psi^m_{G}= I^{m+(n^{(1)}-n^{(0)})/4}(G,M;\Omega^{1/2})$ has a principal symbol \cite[Theorem 18.2.11]{Horm-classics} in:
\begin{equation}
 S^{[m+(n^{(1)}-n^{(0)})/4+n/4]}(A^*G, \Omega^{1/2}_{A^{*}G} \otimes \hat{\Omega}^{1/2}  \otimes \hat{\Omega}^{-1/2}_{G} ).
\end{equation}
The density bundle above is canonically trivial:
\begin{equation}\
 \Omega^{1/2}_{A^{*}G} \otimes \hat{\Omega}^{1/2}  \otimes \hat{\Omega}^{-1/2}_{G} =  \Omega^{1/2}(TM\oplus A^{*}G)\oplus \Omega^{1}(AG) \otimes \Omega^{-1/2}(TM\oplus AG) \simeq M \times \CC,
\end{equation}
and since half densities on $A^*G$ contribute with a value of $n^{(1)}/2$ to the degree of symbols, the simplification above lowers the degree by the same value. In conclusion the principal symbol map is a well defined map: 
\begin{equation}
   \sigma_0: \Psi^m_{G}  \longrightarrow S^{[m]}(A^*G).
\end{equation}
Alternatively, given $\nP\in  \Psi^m_{G} $, one may consider  the family $\widetilde{\nP} =  (\nP_{x})_{x\in M}$,  $\nP_{x}\in \Psi^{m}(G_{x}, \Omega^{1/2}_{G_{x}})$ associated with $\nP$ by the isomorphism \eqref{eq:iso-dist-Gop} and collect the family of principal symbols $\sigma(\nP_{x}) \in S^{[m]}(T^*G_{x})$ into the element $\sigma(\nP)\in S^{[m]}(T^*_{s}G)$, where $T^*_sG=(\ker ds)^*$, defined by: 
$$\sigma(\nP)(\gamma,\xi)= \sigma(\nP_{s(\gamma)})(\gamma,\xi).$$
 In this point of view, the principal symbol is a map : 
\begin{equation}
   \sigma: \Psi^m_{G}  \longrightarrow S^{[m]}(T_s^*G).
\end{equation}
Both notions are related by: 
\begin{prop}\label{prop:princ-symbol-2pov}
 With the notation above, the following identity holds true:
\begin{equation}
  \sigma  = \sigma_0 \circ \rg.
\end{equation}
\end{prop}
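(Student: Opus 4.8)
The discussion preceding the statement shows that both $\sigma_0(\nP)$ and $\sigma(\nP)$ are \emph{scalar} symbol classes (the ambient half‑density bundles being canonically trivial), so the assertion is the pointwise identity $\sigma(\nP)(\gamma,\xi)=\sigma_0(\nP)(\rg(\gamma,\xi))$ for $(\gamma,\xi)\in T^{*}_{s}G\setminus 0$, where I recall that by definition $\sigma(\nP)(\gamma,\xi):=\sigma(\nP_{s(\gamma)})(\gamma,\xi)$. Note first that $\rg$ is legitimately evaluated here: in the formula $\rg(\gamma,\xi)=\big(r(\gamma),{}^{t}dR_{\gamma}(\xi|_{T_{\gamma}G_{s(\gamma)}})\big)$ only the restriction $\xi|_{T_{\gamma}G_{s(\gamma)}}\in (T^{*}_{s}G)_{\gamma}$ enters, so $\rg$ descends to a map $T^{*}_{s}G\to A^{*}G$ whose restriction to the fibre over $\gamma$ is the isomorphism ${}^{t}\cR_{\gamma}$ onto $A^{*}G_{r(\gamma)}$, $\cR$ being the bundle map \eqref{nota:cR}; in particular it carries $T^{*}_{s}G\setminus 0$ into $A^{*}G\setminus 0$.

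\emph{Step 1: reduction to the units.} Fix $\gamma$ and set $x=s(\gamma)$, $y=r(\gamma)$. The $G$-operator axiom \eqref{eq:axiom-G-op} says precisely that $\nP_{x}$ and $\nP_{y}$ are intertwined by the pullback along the diffeomorphism $R_{\gamma}\colon G_{y}\to G_{x}$, $\eta\mapsto\eta\gamma$, which maps the unit $\upsilon(y)$ to $\gamma$ and whose differential there is $\cR_{\gamma}$. By naturality of the principal symbol of a pseudodifferential operator under diffeomorphisms,
\[
 \sigma(\nP_{x})(\gamma,\xi)\;=\;\sigma(\nP_{y})\big(\upsilon(y),\,{}^{t}\cR_{\gamma}(\xi)\big).
\]
Since the right‑hand covector, attached to the base point $y$, is exactly $\rg(\gamma,\xi)$, the identity for all $(\gamma,\xi)$ follows from the special case $\gamma=\upsilon(x)$, $x\in M$, which is treated in Step~2. (This also reproves that the fibrewise‑defined $\sigma(\nP)$ is a genuine symbol class on $T^{*}_{s}G$.)

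\emph{Step 2: the case of a unit.} It remains to prove $\sigma(\nP_{x})(\upsilon(x),\eta)=\sigma_{0}(\nP)(x,\eta)$ for $\eta\in A^{*}G_{x}=T^{*}_{\upsilon(x)}G_{x}$. Let $u=k_{\nP}\in I^{\,m+(n^{(1)}-n^{(0)})/4}(G,M;\Omega^{1/2})$ be the convolution kernel of $\nP$. By the inverse of the isomorphism \eqref{eq:iso-dist-Gop}, the Schwartz kernel of $\nP_{x}$ on $G_{x}\times G_{x}$ is $p_{x}(\gamma_{1},\gamma_{2})=u(\gamma_{1}\gamma_{2}^{-1})$. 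The point is that the single distribution $v:=u|_{G_{x}}=p_{x}|_{G_{x}\times\{\upsilon(x)\}}$ carries both symbols: the two restrictions agree because $u(\gamma\,\upsilon(x)^{-1})=u(\gamma)$ for $\gamma\in G_{x}$, and both are restrictions of conormal distributions to transverse submanifolds, since $G_{x}$ meets $M$ transversally in $G$ at the single point $\upsilon(x)$ (the subspaces $AG_{x}$ and $\upsilon_{*}T_{x}M$ are complementary in $T_{\upsilon(x)}G$) and $G_{x}\times\{\upsilon(x)\}$ meets the diagonal $\Delta_{G_{x}}$ transversally at $(\upsilon(x),\upsilon(x))$. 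By the functoriality of the principal symbol of a conormal distribution under restriction to a transverse submanifold (see \cite[\S 18.2]{Horm-classics}, and its groupoid incarnation in \cite{LV1}), $v$ is conormal to $\{\upsilon(x)\}$ in $G_{x}$ and: read off from $u$, its principal symbol at $\eta$ is $\sigma_{0}(\nP)(x,\eta)$, because the canonical isomorphism $N^{*}(\{\upsilon(x)\};G_{x})\cong N^{*}(M;G)|_{x}$ provided by transversality is the identity under the identifications $T^{*}_{\upsilon(x)}G_{x}=A^{*}G_{x}=N^{*}(M;G)|_{x}$; read off from $p_{x}$, identifying $N^{*}(\Delta_{G_{x}};G_{x}\times G_{x})$ with $T^{*}G_{x}$ in the usual way and tracing the transversality isomorphism (which comes from $(w_{1},w_{2})\mapsto w_{1}-w_{2}$), its principal symbol at $\eta$ is $\sigma(\nP_{x})(\upsilon(x),\eta)$. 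Comparing the two expressions closes the argument.

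\emph{Expected main obstacle.} The geometry is short; the delicate part is the bookkeeping of half-densities and orders. Each transverse restriction twists the $\Omega^{1/2}$-bundle of the ambient manifold by a half-density of the relevant normal bundle, and one must check, using the structural identifications of densities over $G$ (through $\cR$ and $\cS$) and the canonical trivializations of the symbol bundles recalled before the statement, that these twists match on the two sides and that the orders are consistent with the conventions of \cite{LV1,LMV1}. A hands-on confirmation that no spurious constant appears: in adapted coordinates near $\upsilon(x)$ in which $s$ is a coordinate projection and $M=\{z=0\}$, one has $\gamma_{1}\gamma_{2}^{-1}=\big(r(\gamma_{2}),\,B(z_{1},z_{2})(z_{1}-z_{2})\big)$ with $B(0,0)=\Id$; substituting this into an oscillatory representation $u=\int e^{i z\cdot\zeta}a(\cdot,\zeta)\,d\zeta$ and performing the change of variables $\zeta\mapsto{}^{t}B^{-1}\eta$ puts the Schwartz kernel of $\nP_{x}$ in standard form with all Jacobian and density factors equal to $1$ at $z_{1}=z_{2}=0$, so that $\sigma(\nP_{x})(\upsilon(x),\eta)$ equals $\sigma_{0}(\nP)(x,\eta)$ on the nose. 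Finally, the transversality hypotheses guarantee that the restrictions of distributions used above are well defined, the wavefront sets involved lying in $N^{*}M$, respectively $N^{*}\Delta_{G_{x}}$, which meet the conormal varieties of the relevant maps only at the zero section.
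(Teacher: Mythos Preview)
Your argument is correct and takes a genuinely different route from the paper's. The paper proceeds by a direct local oscillatory-integral computation: it writes the conormal kernel $\nP$ in coordinates trivializing $s$, unwinds the convolution $\nP\star u$ to produce the amplitude \eqref{eq:ampli-Px} and total symbol \eqref{eq:symb-total-Px} of $\nP_{x}$, and reads off the leading term using $\,{}^{t}\psi(\gamma,\gamma)^{-1}={}^{t}(dR_{\gamma})_{r(\gamma)}$ so that $(r(\gamma),{}^{t}\psi(\gamma,\gamma)^{-1}\xi')=\rg(\gamma,\xi')$. Your proof, by contrast, first invokes the $G$-operator axiom and diffeomorphism-naturality of principal symbols to reduce to $\gamma\in M$, and then identifies both symbols at a unit as the principal symbol of the \emph{same} transverse restriction $v=u|_{G_{x}}=p_{x}|_{G_{x}\times\{\upsilon(x)\}}$ of a conormal distribution. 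This is cleaner and makes the role of right-invariance transparent; your local-coordinate confirmation (the matrix $B$ with $B(0,0)=\Id$) is essentially the paper's $\psi(\alpha,\gamma)$ evaluated on the diagonal, so the two arguments meet at the endpoint. What the paper's approach buys, however, is more than the proposition itself: the explicit formulas \eqref{eq:ampli-Px}--\eqref{eq:symb-asympt-exp-Px} are reused immediately afterwards to express the sub-principal $G$-symbol \eqref{eq:repres-symb-sous-princip} and, crucially, are the backbone of the proof of Theorem~\ref{prop:adapt-25-2-4}. Your argument establishes the principal-symbol identity economically but does not produce these intermediate expressions, so if you adopt it you will still need the paper's computation (or an equivalent one) later in the section.
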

\begin{rmk}
 Strictly speaking, the target map $\rg$ is defined on $T^*G$. It is by construction the composition of the natural restriction map $T^*G\to T^*_sG$ with  the natural map $ T^*_sG\to A^*G$. It is understood in the Proposition above that $ \rg $ means the latter. 
\end{rmk}

\begin{proof}
Let $\nP\in \Psi^m_{G}$.  Without loss of generality, we can assume that $\nP$ is supported in a local chart $U$ around some point of $M$ and satisfying:
\begin{enumerate}[-]
 \item  the local coordinates trivialize the source map, that is   $\gamma = (x'',x')$ with $s(\gamma)=x''$ on $U$,
 \item  the domain $U$ is invariant for the inversion map :  $U^{-1} = U$. 
\end{enumerate}
We then pick up a positive one density $\mu$ on $AG$ such that:
$$ \forall x\in U\cap G^{(0)}, \quad \mu(x) = | dx'|,$$
and define $\mu_{s}\in C^{\infty}(G,\Omega^{1/2}_{s})$, $\mu_{r}\in C^{\infty}(G,\Omega^{1/2}_{r})$ by:
$$ \mu_{s}(\gamma)= \mu(r(\gamma)) \text{ and } \mu_{r}(\gamma)= \mu(s(\gamma)). $$
We can set on $U$: 
\begin{equation}
  \nP(\gamma) = \rmP(\gamma) . \mu^{1/2}_{s}\mu^{1/2}_{r}
\end{equation}
where $\rmP$ is a scalar oscillatory integral conveniently given  in the following form:
\begin{equation}\label{eq:P-con-dist}
\rmP(\gamma) = \int e^{-i\gamma^{-1}.\xi'} p_{0}(r(\gamma),\xi')d\xi'. 
 \end{equation}
Let us describe the various ingredients of this formula.  First, $p_{0}\in  S^{m}(A^*G)$ is a (classical) symbol, and the integral (in the distribution sense) is performed with respect to $(0,\xi')\in A_{r(\gamma)}^{*}G \subset T_{r(\gamma)}^{*}G$. 
 Secondly, it is understood that    $\gamma^{-1}=\iota(\gamma)$  stands for the $n$-tuple of coordinates of the inverse of $\gamma$ in $G$, and then  $\gamma^{-1}.\xi'$ stands for its scalar product with $(0,\xi')$  in $\RR^{n}$. We could use the inverse of an exponential map to give an invariant meaning to $\gamma^{-1}.\xi$ with $\xi \in A_{r(\gamma)}^{*}G$, but since we already work in local coordinates, this is pointless. 
Finally, we read from \eqref{eq:P-con-dist} that: 
\begin{equation}
 \sigma_0(\nP) = p_{0} \mod S^{m-1}(A^*G),
\end{equation}
and since the symbols used here are classical, we may identify  $\sigma_0(\nP)$ with the leading homogeneous part $p_{0}^{0}$ of $p_{0}$.  
Now let  $u \in C^{\infty}_{c}(G)$ with support in a local chart  $V$ of $G$, and set:
 \begin{equation}
 u(\gamma) = \rmu(\gamma) .  \mu^{1/2}_{s}\mu^{1/2}_{r}
\end{equation}
with $\rmu\in C^{\infty}_{c}(V, \CC)$. To express $\nP {u}$ in local coordinates in terms of $\rmP$ and $\rmu$, we need to recall the necessary identifications of densities allowing the convolution product:
 \begin{equation}\label{eq:abstract-convolution-P-A}
 \nP({u})(\gamma)= \int_{\alpha \in G_{s(\gamma)}} \nP(\gamma\alpha^{-1}){u}(\alpha).
\end{equation}
For that purpose, note that for any $\gamma,\alpha$ with same source point:
$$ \mu_{s}(\gamma\alpha^{-1})=\mu(r(\gamma))= \mu_{s}(\gamma), \   \mu_{r}(\gamma\alpha^{-1})=\mu(r(\alpha))= \mu_{s}(\alpha), \ \mu_{r}(\alpha)= \mu_{r}(\gamma).  $$ 
Hence:
$$  \nP(\gamma\alpha^{-1}){u}(\alpha) = \rmP(\gamma\alpha^{-1})\rmu(\alpha) \mu_{s}(\alpha)   \mu^{1/2}_{s}(\gamma)\mu^{1/2}_{r}(\gamma). $$
It remains to express $ \mu_{s}(\alpha) $ in term of a one density on $G_{x}\cap V$.  
We also assume that the coordinates fixed on $V$ trivializes the source map $s$: 
$$V\ni \alpha = (\alpha'',\alpha') \text{ with } s(\alpha) = \alpha''$$
In the coordinates fixed on $U$ and $V$, we get using $ (dR_{\alpha})_{r(\alpha)} : (r^{*}AG)_{\alpha} \overset{\simeq}{\longrightarrow} (\ker ds )_{\alpha}$:
$$\mu_{s}(\alpha)= | dx'| = |(dR_{\alpha})_{r(\alpha)}|^{-1}| d\alpha'|. $$
It follows that,  setting $\widetilde{\nP}({u}) = \rmv \mu^{1/2}_{s}\mu_{r}^{1/2}$ on $W$:
\begin{align}
\rmv(\gamma) & = \int P(\gamma\alpha^{-1})\rmu(\alpha)  |(dR_{\alpha})_{r(\alpha)}|^{-1} d\alpha' \\
& = \int e^{-i\alpha\gamma^{-1}.\xi'} p_{0}(r(\gamma),\xi') |(dR_{\alpha})_{r(\alpha)}|^{-1} \rmu(\alpha) d\alpha' d\xi'.
\end{align}
Actually, the action of the induced family of operators $\nP_{x}\in \Psi^{m}(G_{x}, \Omega^{1/2}_{G_{x}})$ on half-densities ${f}\in C^{\infty}_{c}(G_{x},\Omega^{1/2}_{G_{x}})$ is given by the same formula: 
\begin{equation}\label{pdo-algebraic-style}
\text{ if }{f} = \rmf \mu_{s}^{1/2}, \text{ then }\nP_{x}({f}) = \rmv \mu_{s}^{1/2} \text{ with }  \rmv(\gamma) = \int e^{-i\alpha\gamma^{-1}.\xi} p_{0}(r(\gamma),\xi) \rmf(\alpha) |(dR_{\alpha})_{r(\alpha)}|^{-1} d\alpha'd\xi.
\end{equation}
Let us set 
\begin{equation}
\varphi(\alpha)  = |(dR_{\alpha})_{r(\alpha)}|^{-1}.
\end{equation}
Since $\alpha\gamma^{-1}$ vanishes at $\alpha=\gamma\in G_{s(\gamma)}$, there exists a linear map 
\begin{equation}
 \psi(\alpha,\gamma)  : \RR^{n^{(1)}} \longrightarrow  \RR^{n^{(1)}}
\end{equation}
which is $C^\infty$ in $(\alpha,\gamma)$, bijective for $\alpha$ in a neighborhood of $\gamma$ and satisfies: 
\begin{equation}
    \alpha\gamma^{-1}  = \psi(\alpha,\gamma)(\alpha-\gamma). 
\end{equation}
By construction we have:
\begin{equation}
\psi(\gamma,\gamma)= (dR_{\gamma^{-1}})_{\gamma} = (dR_{\gamma})_{r(\gamma)}^{-1}.
\end{equation}
Now we work on \eqref{pdo-algebraic-style} to find the amplitude and symbol of $\nP_{x}$ in local coordinates on $V$:
\begin{align}\label{pdo-friendly-style}
\rmv(\gamma) & = \int e^{-i\langle \alpha-\gamma,{}^{t}\psi(\alpha,\gamma)\xi' \rangle} p_{0}(r(\gamma),\xi') \rmu(\alpha)\varphi(\alpha) d\alpha'd\xi'  \nonumber\\
 & = \int e^{i\langle \gamma - \alpha,\xi' \rangle} \big(p_{0}(r(\gamma),{}^{t}\psi(\alpha,\gamma)^{-1}\xi')\varphi(\alpha)  \vert {}^{t}\psi(\alpha,\gamma)\vert ^{-1}\ \rmu(\alpha) d\alpha'd\xi'  \nonumber\\
 & =  \int e^{i\langle \gamma - \alpha,\xi' \rangle} \widetilde{p}(\gamma,\alpha,\xi') \rmu(\alpha) d\alpha'd\xi' \nonumber\\
 & =  \int e^{i\langle \gamma - \alpha,\xi' \rangle} p(\gamma,\xi') \rmu(\alpha) d\alpha'd\xi'
\end{align}
where we have set 
\begin{equation}\label{eq:ampli-Px}
 \widetilde{p}(\gamma,\alpha,\xi')  = p_{0}(r(\gamma),{}^{t}\psi(\alpha,\gamma)^{-1}\xi')\varphi(\alpha)  \vert {}^{t}\psi(\alpha,\gamma)\vert ^{-1}
\end{equation}
and 
\begin{equation}\label{eq:symb-total-Px}
 p(\gamma,\xi') = e^{i\langle D_{\alpha'},D_{\xi'} \rangle}\widetilde{p}(\gamma,\alpha,\xi')\vert_{\alpha=\gamma}
\end{equation}
which gives the asymptotic expansion:
\begin{equation}\label{eq:symb-asympt-exp-Px}
 p(\gamma,\xi') \sim \sum \frac1{k!}\langle iD_{\alpha'},D_{\xi'} \rangle^{k}\widetilde{p}(\gamma,\alpha,\xi')\, \vert_{\alpha=\gamma}.
\end{equation}
Since  $(r(\gamma), {}^t\psi(\gamma,\gamma)^{-1}\xi') = (r(\gamma), {}^t (dR_{\gamma})_{{}_{r(\gamma)}}\xi') = \rg(\gamma,\xi')$,  the expression of the principal symbol of $\nP_{x}$ over $V$ is the first term in the sum \eqref{eq:symb-asympt-exp-Px}:
\begin{equation}
 \sigma(\nP_{x})(\gamma,\xi') = p_{0}(\rg(\gamma, \xi'))  \mod S^{m-1}(T^*_sG),
\end{equation}
or equivalently using  homogeneous expansions:  $\sigma(\nP) =p^{0} = p_{0}^{0}\circ \rg$.   
 \end{proof}

\begin{rmk}\label{rmk:convention-extension}
We will often consider $C^{\infty}$ functions on $T^*_sG$ as $C^{\infty}$ functions on $T^{*}G$, thanks to the convention $a(\gamma,\xi)=a(\gamma,\xi\vert_{T_{s(\gamma)}G})$. 
\end{rmk}
 
We now turn our attention to the sub-principal symbols. It is not obvious to us how to define the sub-principal symbol for general conormal distributions, but in the case of  $\Psi^{*}_{G}=I(G,M,\Omega^{1/2})$, we  may again consider the family of usual sub-principal symbols of the operators $\nP_x\in \Psi^{m}(G_{x}, \Omega^{1/2}_{G_{x}})$ and set: 
 \begin{equation}
 (\gamma,\xi)\in T_s^*G,\quad  \sigma^{1s}(\nP)(\gamma,\xi) := \sigma^{1s}(\nP_{s(\gamma)})(\gamma,\xi) \in S^{[m-1]}(T_s^*G).
\end{equation}
 This gives a well defined map: 
 \begin{equation}
   \sigma^{1s} : \Psi^m_{G} \longrightarrow S^{[m-1]}(T_s^*G),
\end{equation}
When $\nP$ is given by \eqref{eq:P-con-dist},  we recall that the sub-principal symbol above is given in terms of the homogeneous expansion of total symbol  $p$, expressed in the last proof (see formula \eqref{eq:ampli-Px}, \eqref{eq:symb-total-Px} and \eqref{eq:symb-asympt-exp-Px}), by:
\begin{equation}\label{eq:repres-symb-sous-princip}
  p^{1s}(\gamma, \xi') := \sigma^{1s}(\nP)(\gamma,\xi') = p^{1}(\gamma,\xi')  - \frac{i}2 \langle D_{\gamma'},D_{\xi'} \rangle p^{0}(\gamma,\xi'),
\end{equation}
\bigskip We now consider  $ p^{0} = \sigma(\nP)$ as a  $C^\infty$ homogeneous function on $ T^{*}_{\mbt} G = T^{*}G\setminus \ker \rg $ (see Remark \ref{rmk:convention-extension}) and we denote  
 $H_{p^{0}}\in \Gamma(TT^{*}_{\mbt} G)$ the hamiltonian vector field of $p^{0}$. We recall that the latter is defined by $dp^{0}(\cdot)=\omega_G(H_{p^{0}}, \cdot)$,  and  in local coordinates $(\gamma, \xi)$ we get: 
$$
  \quad  H_{p^{0}}=\sum_{j=1}^n \frac{\partial p^{0}}{\partial \xi_j}\frac{\partial }{\partial \gamma_j}-\frac{\partial p^{0}}{\partial \gamma_j}\frac{\partial }{\partial \xi_j}.
$$ 
Now we shall compute the principal symbol of a product $\nP\nA$ where $\nP$ is a $G$-PDO and $\nA$ a $G$-FIO in the situation later encountered in the construction  of the parametrix of $e^{it\nP}$. To that purpose, we recall that the principal symbol of $G$-FIO is a homomorphism \cite{Horm-classics}:
\begin{equation}\label{eq:symbol-fio-brut}
I^{m}(G,\Lambda; \Omega^{1/2}) \To S^{[m+n/4]}(\Lambda,M_\Lambda\otimes\Omega^{1/2}_{\Lambda}\otimes  \hat{\Omega}^{1/2} \otimes \hat{\Omega}^{-1/2}_{G})
\end{equation}
where $M_{\Lambda}$ is the Maslov bundle and $\hat{E}$ denotes the pull back of the vector bundle $E\to G$ over $\Lambda$. By \cite{LV1}, we  know that there is canonical isomorphism:
\begin{equation}\label{eq:iso-bizar-dens-nice-dens}
 \hat{\Omega}^{1/2} \otimes \hat{\Omega}^{-1/2}_{G} \simeq  \Omega^{1/2}_{\rg} = \sg^{*}\Omega^{1/2}(AT^{*}G)
\end{equation}
This isomorphism  uses the product and inversion map of $G$ but their contributions cancel and thus, elements in $ \hat{\Omega}^{1/2} \otimes \hat{\Omega}^{-1/2}_{G}$ do define, without any other data, pull back of half densities on the vector bundle $AT^{*}G\To A^{*}G$. We thus may  consider the principal symbol of Fourier integral $G$-operators as a homomorphism: 
\begin{equation}\label{eq:symbol-fio-fine}
I^{m}(G,\Lambda;\Omega^{1/2}) \To S^{[m+n/4]}(\Lambda,M_\Lambda\otimes\Omega^{1/2}_{\Lambda}\otimes \Omega^{1/2}_{\rg})
\end{equation}
We recall that for a manifold $X$ and a vector field $V$ on $X$ with flow $\phi_{t}$, the Lie derivative of a $\alpha$-density $a$ is the $\alpha$-density given by, in local coordinates $a = \mathsf{a} \vert dx\vert^{\alpha}$: 
\begin{equation}\label{eq:lie-deriv-alpha-dens-Lambda}
 \cL_{V}(\mathsf{a} \vert dx\vert^{\alpha}) =  \frac{d}{dt}\phi^{*}_{t}\mathsf{a} \vert dx\vert^{\alpha}\vert_{t=0} = \big(V\cdot \mathsf{a} + \alpha\, \mathrm{div}(V)\mathsf{a}\big)  \vert dx\vert^{\alpha} .
\end{equation}
This is the same for sections $a\in C^{\infty}(\Lambda, M_{\Lambda}\otimes \Omega^{1/2}_{\Lambda})$ and vector fields $V\in \Gamma(T\Lambda)$. Indeed, the transition functions of  $M_{\Lambda}$ are locally constant, so the bundle $M_{\Lambda}$ can be factorized out of \eqref{eq:lie-deriv-alpha-dens-Lambda}. 

On the other hand, we are mainly interested in Hamiltonian vector fields $V = H_{f}$ that are also right invariant, which happens if and only if  $f = f_{0}\circ \rg$ \cite{CDW}, and such that $f\vert_{\Lambda} = 0$, which implies that $V$ is tangent to $\Lambda$. 
For such vector fields, we can extend the Lie derivative above to a map $\cL_{f} =\cL_{H_{f}}$ acting on sections of the line bundle appearing in the symbols  space in \eqref{eq:symbol-fio-fine}.
To do that,  consider  $\nu_{\rg} = \nu\circ\sg\in C^{\infty}(T^{*}G, \Omega^{1/2}_{\rg})$ with $\nu$ a positive density on $AT^{*}G$. Since by assumption 
$s_{\sg}\circ\phi_{t} = \sg$, we get:
\begin{align*}
 \cL_{f}(\nu_{\rg})  : =  \frac{d}{dt}\phi^{*}_{t}(\nu_{\rg})\vert_{t=0} 
    =  \frac{d}{dt}\nu(\sg\circ\phi_{t})\vert_{t=0}  =  \frac{d}{dt}\nu^{1/2}(\sg)\vert_{t=0}  = 0.
  \end{align*}
Combining the usual action of $V\vert_{\Lambda}$ recalled in \eqref{eq:lie-deriv-alpha-dens-Lambda} with the above trivial one,  we obtain that $V=H_{f}$ acts on $C^{\infty}(\Lambda, M_\Lambda\otimes\Omega^{1/2}_{\Lambda}\otimes\Omega^{1/2}_{\rg})$ by the formula:
\begin{align}\label{eq:yes}
 \cL_{f}(\mathsf{a}\mu_{\Lambda}^{1/2}\nu_{\rg}^{1/2}) =  \big(H_{f}\cdot \mathsf{a} + \frac12\, \mathrm{div}(H_{f})\mathsf{a}\big)\mu_{\Lambda}^{1/2}\nu_{\rg}^{1/2}.
  \end{align}
In the important particular case where the $G$-relation $\Lambda$ is a bissection, that is, when $\sg,\rg$ are diffeomorphisms from $\Lambda$ to open subsets of $A^{*}G$, then 
\begin{equation}
T\Lambda \oplus \ker d\sg\vert_{\Lambda} = T_{\Lambda}T^{*}G. 
\end{equation}
Since $v = H_{f_{0}\circ \rg}$ is tangent to both $\Lambda$ and to the $\sg$-fibers, we conclude that $v$ vanishes on $\Lambda$, which implies that $\cL_{f} = 0$ in \eqref{eq:yes}.

\begin{thm}\label{prop:adapt-25-2-4}
Let $\Lambda$ be a $G$-relation  and  $Q\in\Psi_{G,c}^m$ with principal and sub-principal $G$-symbols $q^{0}$ and $q^{1s}$.  Assume  that  $q^{0}$ vanishes on $\Lambda$. Let $\nA\in I^{m'}(G,\Lambda;\Omega^{1/2})$ and let $a\in S^{m'+n/4}(\Lambda,M_\Lambda\otimes\Omega^{1/2}_{\Lambda}\otimes  \hat{\Omega}^{1/2}\otimes \hat{\Omega}^{-1/2}_{G})$ be a principal symbol of $\nA$. \\
Then 
\begin{equation}
   Q\nA \in I^{m+m'-1}(G,\Lambda;\Omega^{1/2})
\end{equation}
and $\nP\nA$ has a principal symbol represented by
\begin{equation}
 -i\cL_{q^{0}}a+ q^{1s}a.  
\end{equation}
\end{thm}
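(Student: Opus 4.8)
\emph{Overview.} The plan is to reduce the statement to a computation on a single $s$-fibre, where it becomes the classical symbol calculus for the composition of a pseudodifferential operator with a Fourier integral operator in the case when the former has principal symbol vanishing on the Lagrangian \cite[Theorem 25.2.4]{Horm-classics}, and then to repackage the fibrewise data into the claimed intrinsic formula, the key point being that the Hamiltonian field $H_{q^{0}}$ is right invariant. First, by the isomorphism \eqref{eq:iso-dist-Gop} the operators $Q$ and $A$ correspond to right invariant $C^{\infty}$ families $(Q_{x})_{x\in M}$ and $(A_{x})_{x\in M}$ acting on the manifolds $G_{x}$, with $Q_{x}\in\Psi^{m}(G_{x},\Omega^{1/2}_{G_{x}})$ and $A_{x}$ given, in the local coordinates of Proposition \ref{prop:princ-symbol-2pov}, by locally finite sums of oscillatory integrals whose phase parametrizes the Lagrangian $\Lambda_{x}\subset T^{*}G_{x}$ attached to $\Lambda$ over $G_{x}$ (see \cite{LV1}). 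Since convolution on $G$ is performed fibrewise, $QA$ corresponds to $(Q_{x}A_{x})_{x\in M}$, so it is enough to compute, with uniform $C^{\infty}$ dependence on $x$, the order and the principal symbol of each $Q_{x}A_{x}$; right invariance then forces the outcome to come from a $G$-symbol on $\Lambda$.

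\emph{Order and vanishing of the leading symbol.} The Lagrangian attached to $Q\in\Psi^{m}_{G,c}$ is the conormal $N^{*}M$, which coincides with the unit space $A^{*}G$ of the cotangent groupoid, so $A^{*}G\cdot\Lambda=\Lambda$ and the composition of \cite{LV1} applies with no clean-intersection obstruction. Without the vanishing hypothesis one would only get $QA\in I^{m+m'}(G,\Lambda;\Omega^{1/2})$, with principal symbol the product of a principal symbol of $A$ with $\sigma_{0}(Q)\circ\rg$ restricted to $\Lambda$, which by Proposition \ref{prop:princ-symbol-2pov} is $q^{0}|_{\Lambda}$. Under the hypothesis that $q^{0}$ vanishes on $\Lambda$ this leading symbol is zero, hence $QA\in I^{m+m'-1}(G,\Lambda;\Omega^{1/2})$.

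\emph{The subleading symbol.} For the order $m+m'-1$ symbol I would argue fibrewise. In the coordinates of Proposition \ref{prop:princ-symbol-2pov}, represent $A_{x}$ by an oscillatory integral with amplitude $a(\gamma,\theta)$ and apply $Q_{x}$ with total symbol $q\sim q^{0}+q^{1}+\cdots$, using that $q^{1s}=q^{1}-\frac12\langle iD_{\gamma'},D_{\xi'}\rangle q^{0}$ by \eqref{eq:repres-symb-sous-princip}. Expanding the composed symbol by stationary phase in the fibre variables and using that $q^{0}$ vanishes on the critical set, i.e. on $\Lambda_{x}$, the leading term drops out; the first surviving contribution comes from the $\theta$-derivative of $q^{0}$ at the critical point, producing $H_{q^{0}}\cdot a$, from the second-order (half-density Hessian) correction, producing $\frac12\,\mathrm{div}(H_{q^{0}})\,a$, and from the subleading terms of $q$ together with those of the amplitude of $A_{x}$, combining into $q^{1s}a$, the overall factor $-i$ coming from the usual $i^{-1}$ in symbol expansions. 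This is exactly \cite[Theorem 25.2.4]{Horm-classics} transplanted to the manifold $G_{x}$: since $q^{0}$ vanishes on $\Lambda$ the Hamiltonian field of $\sigma(Q_{x})$ is tangent to $\Lambda_{x}$, so the Lie derivative on $M_{\Lambda_{x}}\otimes\Omega^{1/2}_{\Lambda_{x}}$ is well defined, and all estimates are $C^{\infty}$ in $x$.

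\emph{Identification and the main obstacle.} Finally I would recognize the fibrewise answer as $-i\cL_{q^{0}}a+q^{1s}a$ for the intrinsic principal $G$-symbol $a$ of $A$, valued in $M_{\Lambda}\otimes\Omega^{1/2}_{\Lambda}\otimes\hat{\Omega}^{1/2}\otimes\hat{\Omega}^{-1/2}_{G}\simeq M_{\Lambda}\otimes\Omega^{1/2}_{\Lambda}\otimes\Omega^{1/2}_{\rg}$, cf. \eqref{eq:iso-bizar-dens-nice-dens}--\eqref{eq:symbol-fio-fine}. Here one invokes the discussion preceding the theorem: because $q^{0}=q^{0}_{0}\circ\rg$, the field $H_{q^{0}}$ is right invariant, tangent to $\Lambda$ (a $G$-relation) and tangent to the $\sg$-fibres, so its flow preserves $\sg$ and hence fixes the factor $\Omega^{1/2}_{\rg}=\sg^{*}\Omega^{1/2}(AT^{*}G)$; the extension of $\cL_{q^{0}}$ to sections over $\Lambda$ is then precisely formula \eqref{eq:yes}, which restricts on each $\Lambda_{x}$ to the Lie derivative used above, while the Maslov factor $M_{\Lambda}$ is untouched. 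Matching over all $x\in M$ yields $\sigma_{\pr}(QA)=-i\cL_{q^{0}}a+q^{1s}a$. The hard part is the previous step: keeping exact track of the half-density factors so that ``$\theta$-derivative of $q^{0}$ $+$ Hessian correction'' assembles into the invariant Lie derivative $\cL_{q^{0}}$ on $\Omega^{1/2}_{\Lambda}$, and of the second-order terms of both symbols so that they assemble into $q^{1s}$ with the normalization \eqref{eq:repres-symb-sous-princip}, all while ensuring the uniformity in $x\in M$ needed to descend to a statement about $G$-operators.
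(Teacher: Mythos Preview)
Your approach is essentially the one the paper itself acknowledges as an alternative: right before its proof it says explicitly that one \emph{could} consider $QA$ as the family $(Q_{x}\circ A_{x})_{x\in M}$ and apply \cite[Theorem 25.2.4]{Horm-classics}, but that it will instead treat $QA$ as a single Lagrangian distribution on $G$ and redo the computation of \cite[Theorem 25.2.4]{Horm-classics} in that setting. So the two routes are genuinely different. The paper writes $A$ locally as $\int e^{i(\langle\gamma,\eta\rangle - h(\eta))}\mathsf{a}(\eta)\,d\eta$ with $\eta\in\RR^{n}$ (not $\RR^{n^{(1)}}$), expresses $QA$ directly via the fibrewise total symbol $q(\gamma,\eta')$ extended trivially in $\eta''$, factorizes $q^{0}=\sum_{j}q_{j}(\gamma_{j}-h'_{\xi_{j}})$, integrates by parts, and then identifies the resulting terms with $\cL_{q^{0}}$ and $q^{1s}$. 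What this buys is a self-contained symbol computation on $G$ that never needs $A_{x}$ to be a classical FIO on $G_{x}$.

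This brings up the one point you gloss over. For a general $G$-relation $\Lambda$, the operators $A_{x}$ are only locally finite sums of oscillatory integrals; they are \emph{genuine} Fourier integral operators on $G_{x}$ only when $\Lambda$ is transversal to $T^{*}_{L}G$ for every $L=s^{-1}(\cO)$ (for instance when $\Lambda$ is invertible), as recalled in the paper's reminders on $G$-FIOs. Your phrase ``the Lagrangian $\Lambda_{x}\subset T^{*}G_{x}$ attached to $\Lambda$ over $G_{x}$'' presupposes exactly this transversality, which is not part of the hypothesis. The paper's direct computation sidesteps this: the price it pays is that the extended symbol $q(\gamma,\eta)$ is \emph{not} a symbol in $\eta$ (it is constant in $\eta''$), and it is precisely the $G$-relation hypothesis $\WF{A}\cap\ker\sg=\emptyset$ that makes the product $q(\gamma,\eta)\mathsf{a}(\eta)$ a genuine symbol, allowing the integration-by-parts argument to go through. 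Your fibrewise route can certainly be completed, but you would have to either add the transversality hypothesis or, more honestly, redo the oscillatory-integral computation locally on $G$---at which point you are essentially reproducing the paper's proof.
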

We could consider the distribution $Q\nA$ as the family of operators $Q_{x}\circ \nA_{x}$ and apply \cite[Theorem 25.2.4]{Horm-classics}. However, we are going to consider $Q\nA$ as a single lagrangian distribution  on $G$ given by the convolution in $G$ of two distributions, and  then make the minor necessary adaptations of  the proof of \cite[Theorem 25.2.4]{Horm-classics}. This yields  more conceptual and self-contained explanations for the assertion to be proved.  
\begin{proof}
We keep the assumptions and notation introduced for $Q$ in the proof of Proposition \ref{prop:princ-symbol-2pov}. 
Using a partition of unity and \cite{LV1}, we  can assume that $\nA$ is supported in the domain $V$ of local coordinates trivialising $s$ such that there exists a  conic open set $C$ in $\RR^{n}$ and a  homogeneous $C^\infty$ function $h$ such that:
\begin{equation}
   \Lambda\cap T^{*} V= \{ (h'(\xi),\xi)\ ;\  \xi\in C\} .
\end{equation}
The existence of such coordinates follows from \cite[Lemma 25.2.5 and Theorem 21.2.16]{Horm-classics}.
We can write in these local coordinates above: 
\begin{equation}\label{eq:loc-exp-A}
   \nA = \rmA.  \mu^{1/2}_{s}\mu^{1/2}_{r} \text{ with } \rmA(\gamma)=  \int e^{i(<\gamma,\eta> - h(\eta))} \rma(\eta)d\eta,
\end{equation}
where $\mathsf{a}\in S^{m' - n/4}(\RR^{n})$ has support in a conic neighborhood of $C$.
Then on $V$: 
\begin{equation}\label{eq:complicated-PA}
Q\nA= \rmB \mu^{1/2}_{s}\mu^{1/2}_{r} \text{ with } \rmB(\gamma) =  \int e^{i (-<\alpha\gamma^{-1},\xi> + <\alpha,\eta>-h(\eta))} q_{0}(r(\gamma),\xi)\mathsf{a}(\eta)\varphi(\alpha) d\alpha d\xi d\eta .
\end{equation}
Remember that, according to the decomposition  $\eta=(\eta'',\eta')$ provided by the  local trivialisation of $s$,  the symbol  $q(\gamma,\eta')$ of $Q_{s(\gamma)}$ (see \eqref{pdo-friendly-style} and above) is given by
\begin{equation}\label{eq:class-formula-symbol}
q(\gamma,\eta')= e^{-i\langle \gamma,\eta' \rangle}Q_{s(\gamma)}(e^{i\langle \alpha,\eta' \rangle}) = \int e^{i (-<\alpha\gamma^{-1},\xi>+<\alpha-\gamma,\eta'>)}q_{0}(r(\gamma),\xi)\varphi(\alpha)d\alpha d\xi.
\end{equation}
Since $\gamma,\alpha\in G_{s(\gamma)}$, the same identity is licit for $q$ considered as a function of $(\gamma,\eta)$, but does not define anymore a symbol in general (it satisfies symbolic estimates of order $m$ in $\eta'$ but is independent of $\eta''$). However, the assumption on the wave front set of $\nA$ implies that the symbol $a$ is or order $-\infty$ in some open  cone around $(\eta'',0)$. Indeed, $(\eta'',0) \in \ker(s_\Gamma)$ and by assumption $\Lambda$ is a $G$ relation, hence $\WF{A} \cap\ker(s_\Gamma)= \emptyset$. Therefore, the product $\rmb(\gamma,\eta) = q(\gamma,\eta)\mathsf{a}(\eta)$ is a symbol of order $m+m'-n/4$. We get from \eqref{eq:complicated-PA} and \eqref{eq:class-formula-symbol}:
\begin{equation}\label{eq:concise-developped-PA}
 \rmB(\gamma)= \int e^{i ( <\gamma,\eta> - h(\eta))}  q(\gamma,\eta)\mathsf{a}(\eta) d\eta,
\end{equation}
It is a lagrangian distribution of order $m+m'$ a priori, but since the leading part of $\rmb$, which is represented  $\rmb^0(\gamma,\eta)=q^{0}(\gamma,\eta)\mathsf{a}(\eta)$, vanishes on $\Lambda$, it is actually of  order $m+m'-1$ and we need to work out more the expression \eqref{eq:concise-developped-PA} to get its principal symbol.  To this end, we set: 
\begin{equation}
q(\gamma,\xi)=q^{0}(\gamma,\xi) + e(\gamma,\xi),
\end{equation}
and using  the assumption: $q^{0}(\gamma,\xi)= 0$  whenever $\gamma=h'(\xi)$; we make the factorisation:
\begin{equation}\label{eq:factorize-p0}
q^{0}(\gamma,\xi)= \sum_j q_j(\gamma,\xi)(\gamma_j- \frac{\partial h}{\partial \xi_j}).
\end{equation}
Now, after an integration by parts in \eqref{eq:concise-developped-PA}, we get:
\begin{equation}
 \rmB(\gamma)= \int e^{i (<\gamma,\xi>-h(\xi))} (e\mathsf{a} - \sum_j D_{\xi_j}(q_j \mathsf{a}))d\xi. 
\end{equation}
It follows that $Q\nA$ has principal symbol represented by 
\begin{equation}
(e\mathsf{a} - \sum_j D_{\xi_j}(q_j \mathsf{a} ))(h'(\xi),\xi) \vert d\xi\vert^{1/2} (\mu_{r}\mu_{s}\vert d\gamma\vert^{-1})^{1/2}
\end{equation}
while $\nA$ has principal symbol represented by 
\begin{equation}
 \mathsf{a}(\xi) \vert d\xi\vert^{1/2} (\mu_{r}\mu_{s}\vert d\gamma\vert^{-1})^{1/2}.
\end{equation}
Since $H_{q^{0}}$ is tangent to $\Lambda$ we have on $\Lambda$ parametrized by $\xi$:
 \begin{equation}
     H_{q^{0}}(\xi) = - \sum_{j}\frac{\partial q^{0}}{\partial \gamma_j}(h'(\xi),\xi)\frac{\partial }{\partial \xi_j} = -\sum_{j} q_{j}(h'(\xi),\xi)\frac{\partial }{\partial \xi_j}. 
 \end{equation}
 Then, as a vector field on $\Lambda$, the divergence of $H_{q^{0}}\in\Gamma(T\Lambda)$ is given by:
\begin{equation}\label{eq:tr-hess-p}
\mathrm{div}(H_{q^{0}}) =  - \sum_{j} \frac{\partial}{\partial \xi_j}\Big\lbrack q_j(h'(\xi),\xi)\Big\rbrack= 
   - \sum_{j,k}\frac{\partial^2 h}{\partial \xi_k\partial\xi_j}\frac{\partial q_j}{\partial \gamma_k } - \sum_j \frac{\partial q_j}{\partial \xi_j}.
\end{equation}
On the other hand we have:
\begin{align*}
\frac{\partial^2 {q^{0}}}{\partial \xi_k\partial\gamma_k}(\gamma,\xi) &= 
\frac{\partial}{\partial \xi_k}\sum_j (\frac{\partial}{\partial\gamma_k} q_j).(\gamma_j-h'_j(\xi)) + q_k \\
& =  \sum_j \frac{\partial^2 q_j}{\partial\gamma_k\partial \xi_k}.(\gamma_j-h'_j(\xi)) -\sum_j \frac{\partial q_j}{\partial\gamma_k}\frac{\partial^2 h}{\partial\xi_k\partial\xi_j} + \frac{\partial q_k}{\partial \xi_k},
\end{align*}
which gives after evaluation at $\gamma=h'(\xi)$:
\begin{equation}
\sum_{k}\frac{\partial^2 {q^{0}}}{\partial \xi_k\partial\gamma_k}(h'(\xi),\xi)= \sum_{k}\frac{\partial q_k}{\partial \xi_k}-\sum_{j,k} \frac{\partial q_j}{\partial\gamma_k}\frac{\partial^2 h}{\partial\xi_k\partial\xi_j} =  2\sum_{k}\frac{\partial q_k}{\partial \xi_k}  + \mathrm{div}(H_{q^{0}}).
\end{equation}
Setting 
$$ \mathsf{L}_{q^{0}}(a) :=\cL_{q^{0}}(a)  (\mu_{r}\mu_{s}/\vert d\gamma\vert)^{-1/2} =   - \sum_j q_j\frac{\partial \mathsf{a}}{\partial \xi_j}  + \frac12 \mathrm{div}(H_{q^{0}})\mathsf{a}, $$
we get, still for $\gamma=h'(\xi)$:
\begin{align}\label{al:extract-symbol}
(e\mathsf{a} - \sum_j D_{\xi_j}(q_j \mathsf{a}))  & = \Big(e + i \sum_j \frac{\partial q_{j}}{\partial \xi_{j}}\Big)\mathsf{a} + i\sum_j q_j\frac{\partial \mathsf{a}}{\partial \xi_j} \notag \\
 & = \Big(e + i \sum_j \frac{\partial q_{j}}{\partial \xi_{j}}   + \frac{i}{2}\mathrm{div}(H_{q^{0}})\Big)\mathsf{a} - i\mathsf{L}_{q^{0}}(a) \notag \\
 & = \Big(e  + \frac{i}{2}\sum_{k} \frac{\partial^2 {q^{0}}}{\partial \xi_k\partial\gamma_k} \Big)\mathsf{a}  -  i\mathsf{L}_{q^{0}}(a) \notag \\
 &=  - i\mathsf{L}_{q^{0}}(a) + \left(e + \frac{i}{2}\sum_k  \frac{\partial^2 {q^{0}}}{\partial \xi'_k\partial\gamma'_k} \right)\mathsf{a}. 
\end{align}
In the last line, we have decomposed $\xi=(\xi',\xi'')$ where $\xi'$ is cotangent to the $s$-fibers and used the fact that $q^{0}$ does not depend on the $\xi''$ variables. This proves, looking at formula \eqref{eq:repres-symb-sous-princip}, that  $ - i\cL_{q^{0}}(a) +  q^{1s} . a $ is a principal symbol of $Q\nA$.
\end{proof}

\section{The  Hamiltonian flow of the principal symbol and the associated $G$-relations}\label{sec:evo-lag-sub}
We set: 
 \[
  \Lambda_0 =  A^*G\setminus 0,  \quad  T^{*}_{\mbt} G = T^{*}G\setminus \ker \rg \ \text{ and } \quad \dt{T}^{*}G = T^{*}G\setminus (\ker \rg\cup\ker \sg). 
 \]
The operator $P$ being elliptic, we have $\WF{P}= \Lambda_0$. Recall that $p^{0}_0\in C^\infty(\Lambda_0)$ denotes the homogeneous representative of $\sigma_0(P)$ and that $ p^{0} = p^{0}_0\circ \rg$ is then the homogeneous representative of $\sigma(P)\in C^{\infty}(T^{*}_{s}G\setminus 0)$. 

For every $a\in\RR$ and $(\gamma,\xi)\in T^*G$, we let $a.(\gamma,\xi) =\Dil_a(\gamma,\xi) = (\gamma,a\xi)$. 
\begin{prop}\label{prop:hamilton-flow} The flow $\chi$   of $H_{p^{0}}$ satisfies the following properties:
\begin{enumerate}
\item It is complete. 
 \item  It commutes with dilations in $T^*G$:  
 $$ 
   \forall a\in \RR^*_+,\ t \in \RR,\  (\gamma, \xi)\in T^{*}_{\mbt} G,\quad   a.\chi(t,\gamma, \xi)= \chi(t,\gamma,a\xi). 
 $$ 
 \item  It provides at each time $t$ a section of $s_\Gamma$ and commutes with right multiplication: 
 $$ 
 \forall t \in \RR, \ (\delta_1,\delta_2)\in \Gamma^{(2)},\quad \sg(\chi(t,\delta_1))=\sg(\delta_1) \text{ and } \chi(t,\delta_1\delta_2) = \chi(t,\delta_1)\delta_2.
 $$ 
In particular, the integral curves of $H_{p^{0}}$ go along the fibers of $s_\Gamma$. 
\end{enumerate}
\end{prop}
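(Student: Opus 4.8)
The plan is to deduce all three properties from a single structural fact: the function $p^{0}=p^{0}_{0}\circ\rg$ is right-invariant on $T^{*}G$, i.e.\ constant along the $\rg$-fibers, equivalently it is pulled back from $A^{*}G$ via $\rg$. Once this is in hand, the classical relationship between right-invariant Hamiltonians on a symplectic groupoid and their Hamiltonian flows (as in \cite{CDW}) gives (2) and (3) almost immediately, and (1) follows from completeness in the $s$-fibers together with homogeneity.

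First I would record homogeneity. Since $p^{0}_{0}$ is homogeneous of degree $1$ on $A^{*}G\setminus 0$ and $\rg$ is fiberwise linear, $p^{0}$ is homogeneous of degree $1$ in $\xi$ on $T^{*}_{\mbt}G$; hence $H_{p^{0}}$ is invariant under the dilations $\Dil_{a}$ up to the obvious scaling, which is exactly the statement that $\Dil_{a}\circ\chi_{t}=\chi_{t}\circ\Dil_{a}$. This is (2), and it is the routine part.

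Next, for (3): because $p^{0}$ is right-invariant, its Hamiltonian vector field $H_{p^{0}}$ is a right-invariant vector field on the groupoid $\Gamma=T^{*}G$ in the sense that $(dR_{\delta_{2}})(H_{p^{0}})_{\delta_{1}}=(H_{p^{0}})_{\delta_{1}\delta_{2}}$ whenever $(\delta_{1},\delta_{2})\in\Gamma^{(2)}$, and moreover $H_{p^{0}}$ is tangent to the $\sg$-fibers. This is the content of the characterization in \cite{CDW} that a function on a symplectic groupoid is pulled back from the unit space under $\rg$ iff its Hamiltonian flow is right-invariant and preserves the $\sg$-fibers; I would either cite it directly or verify it in local coordinates using the explicit description of $\rg$, $\sg$ and $m_{\Gamma}$ recalled in Section~\ref{sec:generalities} and the local formula for $p^{0}=p^{0}_{0}\circ\rg$ (with $\rg(\gamma,\xi')={}^{t}(dR_{\gamma})\xi'$) from the proof of Proposition~\ref{prop:princ-symbol-2pov}. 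Integrating: the flow preserves $\sg$-fibers, so $\sg(\chi(t,\delta_{1}))=\sg(\delta_{1})$, and right-invariance of the vector field integrates to right-invariance of the flow, $\chi(t,\delta_{1}\delta_{2})=\chi(t,\delta_{1})\delta_{2}$. Since the units $A^{*}G$ sit inside $T^{*}G$ and are $\sg$-fixed, $\chi_{t}$ restricted over a unit gives a global section of $\sg$, i.e.\ a bisection-type family; and the integral curves staying in a single $\sg$-fiber is the final assertion.

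Finally, for completeness (1): the $\sg$-fiber of $\Gamma$ through $\delta\in T^{*}G_{x}$ is naturally identified with $T^{*}G_{x}$ (cotangent to the $s$-fiber $G_{x}$), and under this identification $H_{p^{0}}$ restricts to the Hamiltonian vector field of the principal symbol $p^{0}(\cdot,\cdot)|_{G_{x}}$ of $P_{x}$, which is the geodesic-type flow of an elliptic first-order symbol. On the compact-free but homogeneous setting, completeness of such a flow is the standard fact that the Hamiltonian flow of a homogeneous-degree-one symbol is complete because it preserves the cosphere bundle $\{p^{0}=1\}$ and the fibers of $s$ over the (compact) orbit closures; more precisely, I would argue that $\chi_{t}$ preserves the level sets of $p^{0}$, that by homogeneity it suffices to control the flow on $\{p^{0}=1\}$, and that right-invariance reduces the problem to the $\sg$-fiber over a point $\xi_{0}\in A^{*}G$, where completeness follows because $P$ is compactly supported as a $G$-operator (so the symbol is globally well-controlled on each $s$-fiber) and $M$ is compact, which bounds the "drift" of integral curves in $x$. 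The cleanest route is: show $\chi_{t}$ exists for $|t|<\varepsilon$ uniformly over the cosphere bundle over $M$ by compactness of $M$ and of the cosphere, then propagate to all $t$ by the one-parameter group property together with right-invariance, exactly as in the proof that the flow of $\widetilde{X}$ is complete earlier in the paper.

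The main obstacle I expect is (1): one must be careful that "complete" genuinely holds despite the $s$-fibers $G_{x}$ being noncompact. The key leverage is that $P$ is \emph{compactly supported} as a $G$-operator and $M$ is compact, so the principal $G$-symbol $p^{0}$ is, modulo right-translation, controlled uniformly on the cosphere bundle over the compact $M$; combined with right-invariance of the flow this upgrades short-time existence (uniform in the base) to all-time existence. Establishing the uniform-in-$x$ short-time bound, and then cleanly invoking the group law plus right-invariance to globalize, is the one step that needs genuine care rather than formal manipulation.
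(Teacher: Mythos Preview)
Your proposal is correct and follows essentially the same route as the paper: (2) from homogeneity of $p^{0}$, (3) from the CDW characterization of Hamiltonians pulled back by $\rg$, and (1) from compactness of $M$ plus homogeneity to get uniform short-time existence on $\Lambda_{0}=A^{*}G\setminus 0$, then right-invariance and the one-parameter group law to globalize. One small correction: in your discussion of (1) you emphasize the compact support of $P$ as the ``key leverage'', but this plays no role---the principal symbol $p^{0}_{0}$ lives on $A^{*}G$ over the compact base $M$ regardless, and it is solely the compactness of $M$ (giving compactness of the cosphere bundle in $A^{*}G$) together with homogeneity that yields the uniform $\varepsilon$; the paper's argument uses only this, and your ``cleanest route'' paragraph already captures it correctly.
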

\begin{proof} This is essentially contained in \cite{CDW}. More precisely:
\begin{enumerate}
 \item[(2)] The homogeneity of  $p^{0}$ implies: 
 $(\Dil_a)_*(H_{p^{0}}) = H_{p^{0}}$ and therefore  $\Dil_a\circ \chi_{t}\circ \Dil_a^{-1} = \chi_{t}$, which gives the result. 

\item[(3)] By definition, we have $\omega(H_{p^{0}},X)=dp^{0}(X)=dp_{0}^{0}(d\rg(X))$ which yields $H_{p^{0}}\in (d\ker \rg)^\omega=d\ker \sg$.   Using the last part of the proof of \cite[Lemma, p.22]{CDW}, we get that $H_{p^{0}}$ is a right invariant vector field, which proves that the flow goes along the $s$-fibers and is right invariant. 
\item[(1)] Now, by compacity of $M=G^{(0)}$  and the homogeneity of $H_{p^{0}}$ in the fibers of $\Lambda_0$, we get the existence of $\epsilon>0$ such that $\chi : ]-\epsilon,\epsilon[\times \Lambda_0\to T^{*}_{\mbt} G$ is well defined. By right invariance, we extend $\chi_p$ onto $]-\epsilon,\epsilon[\times (T^{*}_{\mbt} G)$ and using the group property of flows we can choose for any $t$ an integer $N$ such that $|t|/N< \epsilon$ and set 
\[
 \chi(t,\alpha)=  \chi_{t/N}\circ\ldots \circ\chi_{t/N}(\alpha)
\]
which proves completeness of the flow. 
\end{enumerate}
\end{proof}
\begin{rmk}\label{rmk:prop-flow} \ 
 \begin{enumerate}
  \item The compacity of $M=G^{(0)}$ is only needed here for the completeness of $H_{p^{0}}$.
  \item By construction, $\chi_{t}$ is a diffeomorphism of $T^{*}_{\mbt} G$  and since $s_\Gamma\circ \chi_t = s_\Gamma$, we have:
  \begin{equation}\label{eq:flow_preserves_adm_part}
   \chi_{t}(\dt{T}^{*}G)= \dt{T}^{*}G.
  \end{equation}
 \end{enumerate}
\end{rmk}
We now set: 
\begin{equation}
 \forall t\in \RR,\quad \Lambda_t = \chi_{t}(\Lambda_0) \subset \dt{T}^{*}G.
\end{equation}
\begin{prop}\label{prop:time-lag}
 For any real number $t$, the set $\Lambda_t$ is an invertible $G$-relation and:
 \begin{equation}
  \forall t_1,t_2\in\RR,\quad \Lambda_{t_1}.\Lambda_{t_2} = \Lambda_{t_1+t_2}.
 \end{equation}
\end{prop}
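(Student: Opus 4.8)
The plan is to deduce everything from the three structural properties of the flow $\chi$ established in Proposition~\ref{prop:hamilton-flow}: completeness, equivariance under dilations, and the fact that each $\chi_t$ is right-invariant with $\sg\circ\chi_t=\sg$.

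First I would check that each $\Lambda_t$ is a homogeneous Lagrangian submanifold of $\dt{T}^{*}G$. Since $H_{p^{0}}$ is a Hamiltonian vector field on the open symplectic submanifold $T^{*}_{\mbt}G$, each $\chi_t$ is a symplectomorphism of $T^{*}_{\mbt}G$, and by Remark~\ref{rmk:prop-flow}(2) it restricts to a diffeomorphism of $\dt{T}^{*}G$. The unit space $\Lambda_0=A^*G\setminus 0$ of the symplectic groupoid $T^*G$ is a conic Lagrangian submanifold (Lagrangianity of the units is \cite{CDW}), and it lies in $\dt{T}^{*}G$ because $\sg$ and $\rg$ restrict to the identity on $A^*G$, so a nonzero unit is annihilated by neither. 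Applying the symplectomorphism $\chi_t$, together with $\Dil_a\Lambda_t=\Lambda_t$ for $a>0$ from Proposition~\ref{prop:hamilton-flow}(2), shows that $\Lambda_t=\chi_t(\Lambda_0)\subset\dt{T}^{*}G$ is a conic Lagrangian submanifold, i.e. a $G$-relation.

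The core of the argument is invertibility, and here the plan is to manufacture from the right-invariance of $\chi$ a diffeomorphism of $A^*G\setminus 0$. For $\delta\in T^{*}_{\mbt}G$ one has $\rg(\delta)\in A^*G\setminus 0$ and $\rg(\delta)\cdot\delta=\delta$, so right-invariance gives $\chi_t(\delta)=\chi_t(\rg(\delta))\cdot\delta$; taking ranges yields $\rg(\chi_t(\delta))=g_t(\rg(\delta))$, where $g_t:=\rg\circ\chi_t|_{A^*G\setminus 0}$ maps $A^*G\setminus 0$ into $A^*G\setminus 0$ (the image misses $0$ precisely because $\Lambda_t$ avoids $\ker\rg$, established above). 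From $\chi_{-t}\circ\chi_t=\Id$ and the surjectivity of $\rg:T^{*}_{\mbt}G\to A^*G\setminus 0$ one then reads off $g_{-t}\circ g_t=\Id$, so $g_t$ is a diffeomorphism of $A^*G\setminus 0$. Since $\chi_t$ restricts to a diffeomorphism $\Lambda_0\to\Lambda_t$, and on $\Lambda_0$ both $\sg$ and $\rg$ are the identity, I obtain $\sg|_{\Lambda_t}=(\chi_t|_{\Lambda_0})^{-1}$ and $\rg|_{\Lambda_t}=g_t\circ(\chi_t|_{\Lambda_0})^{-1}$, both diffeomorphisms onto $A^*G\setminus 0$; hence $\Lambda_t$ is invertible. (Equivalently, $\chi_t|_{\Lambda_0}$ is a Lagrangian bisection of $T^*G$ with associated base diffeomorphism $g_t$.)

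For the group law, since the $\Lambda_t$ are invertible $G$-relations their composition in $T^*G$ is transverse and again an invertible $G$-relation \cite{LV1}, so it suffices to verify the set-theoretic equality $\Lambda_{t_1}.\Lambda_{t_2}=\Lambda_{t_1+t_2}$. A pair $(\chi_{t_1}(\eta_1),\chi_{t_2}(\eta_2))$ with $\eta_i\in\Lambda_0$ is composable iff $\eta_1=\sg(\chi_{t_1}(\eta_1))=\rg(\chi_{t_2}(\eta_2))=g_{t_2}(\eta_2)$, and then, using right-invariance and $\eta_1=\rg(\chi_{t_2}(\eta_2))$,
\[
 \chi_{t_1}(\eta_1)\cdot\chi_{t_2}(\eta_2)=\chi_{t_1}\big(\eta_1\cdot\chi_{t_2}(\eta_2)\big)=\chi_{t_1}\big(\chi_{t_2}(\eta_2)\big)=\chi_{t_1+t_2}(\eta_2)\in\Lambda_{t_1+t_2};
\]
conversely every $\chi_{t_1+t_2}(\eta)=\chi_{t_1}(\chi_{t_2}(\eta))$ with $\eta\in\Lambda_0$ arises this way with $\eta_2=\eta$, $\eta_1=g_{t_2}(\eta)$, giving the reverse inclusion. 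I expect the invertibility step to be the main obstacle: the Lagrangian, conic and submanifold properties are formal consequences of $\chi_t$ being a dilation-equivariant symplectomorphism, whereas identifying $\rg|_{\Lambda_t}$ as a diffeomorphism genuinely uses the right-invariance of the flow through the auxiliary maps $g_t$ and the relation $g_{-t}\circ g_t=\Id$.
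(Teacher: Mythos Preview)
Your proof is correct and follows essentially the same route as the paper: homogeneity and Lagrangianity of $\Lambda_t$ via the symplectomorphism $\chi_t$, invertibility from $\sg\circ\chi_t=\sg$ together with right-invariance, and the group law by the computation $\chi_{t_1}(\rg(\delta_2))\cdot\delta_2=\chi_{t_1}(\delta_2)=\chi_{t_1+t_2}(u_2)$. The only cosmetic difference is in the $\rg$-side of invertibility: the paper observes that $\rg\circ\chi|_{\RR\times\Lambda_0}$ is the flow of the projected vector field $(\rg)_*H_{p^0}$ on $\Lambda_0$, whereas you check directly that $g_{-t}\circ g_t=\Id$ from $\rg\circ\chi_t=g_t\circ\rg$ and $\chi_{-t}\circ\chi_t=\Id$; both arguments extract the same content from right-invariance.
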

\begin{proof}
Since $\chi$ is the Hamilton flow of a homogeneous function, each $\chi_t$ is a homogeneous symplectomorphism. Since $\Lambda_0$ is  a homogeneous Lagrangian, its image $\Lambda_t$ by  $\chi_t$ is then a Lagrangian homogeneous submanifold, contained by construction in $\dt{T}^{*}G$. Thus, $\Lambda_t$ is a $G$-relation. Since $s_\Gamma\circ \chi_t|_{\Lambda_0} = \Id_{\Lambda_0}$, we get that $s_\Gamma|_{\Lambda_t}$ is a diffeomorphism. The same conclusion is true for $r_\Gamma|_{\Lambda_t}$ because the vector field $H_{p^{0}}$ is right invariant  and therefore $r_\Gamma\circ \chi|_{\RR\times \Lambda_0}$ is the (complete) flow of the vector field $(r_\Gamma)_*(H_{p^{0}})\in C^\infty(\Lambda_0,T\Lambda_0)$ defined by:     $(r_\Gamma)_*(H_{p^{0}})(\delta) = (dr_\Gamma)_\delta(H_{p^{0}}(\delta)), \delta\in\Lambda_0$. That $s_\Gamma|_{\Lambda_t}$ and $r_\Gamma|_{\Lambda_t}$
are diffeomorphisms mean precisely that $\Lambda_t$ is an invertible $G$-relation  \cite{LV1}  (or a lagrangian bissection, if one accepts as bissections submanifolds of $\Gamma$ onto which $r_\Gamma$ and $s_\Gamma$ are diffeomorphisms onto open subsets of $A^{*}G = (T^{*}G)^{(0)}$). 

Let us proceed to the proof of the one parameter group relation.  Let $\delta_j=\chi_{t_j}(u_j)\in \Lambda_{t_j}$, $j=1,2$, be two composable elements. Then $u_1 = s_\Gamma(\delta_1) = r_\Gamma(\delta_2)$ and by commutation of $\chi$ with right multiplication:
$$
  \delta_1.\delta_2 = \chi_{t_1}(r_\Gamma(\delta_2))\delta_2 =  \chi_{t_1}( \delta_2) = \chi_{t_1}( \chi_{t_2}(u_2))=\chi_{t_1+t_2}(u_2)\in  \Lambda_{t_1+t_2}. 
$$ 
The converse inclusion is then obvious. 
\end{proof}

\section{Global aspects of the family $(\Lambda_t)_t$}\label{sec:global-Lambda}
In the vocabulary of \cite{LV1}, the family $(\Lambda_t)_{t\in\RR}$ admits a gluing into a single Lagrangian submanifold $\Lambda\subset T^*(\RR\times G)$. The expression of $\Lambda$ is  actually straightforward and we shall study it in relation with the both groupoid  structures on $\RR\times G$.   
\begin{prop} Let $i_t :G \to \RR\times G$ be the inclusion $\gamma \mapsto(t, \gamma)$. The set
\begin{equation} \label{eq:Lambda}
 \Lambda = \left\{ (t, -p^{0}(\chi_t(x,\xi)), \chi_t(x,\xi) ) \in T^*( \RR\times G)\ ;  \ t \in \RR,  (x,\xi)\in A^*G\setminus 0   \right\}
\end{equation}
is a  conic Lagrangian submanifold of $T^*( \RR\times G)$ satisfying:
\[
   \forall t\in\RR, \quad i_t^*\Lambda = \Lambda_t. 
\]
\end{prop}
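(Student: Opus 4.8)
I would begin by realising $\Lambda$ as the image of a single explicit map, establishing that this map is a conic embedding of the right dimension, showing that it pulls the canonical symplectic form of $T^*(\RR\times G)$ back to zero, and finally reading off $i_t^*\Lambda=\Lambda_t$ directly from the defining formula.

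First, since $p^{0}$ is constant along its own Hamiltonian flow --- $\frac{d}{dt}p^{0}(\chi_t(x,\xi))=dp^{0}(H_{p^{0}})=\omega_G(H_{p^{0}},H_{p^{0}})=0$ --- the $T^*\RR$-coordinate $-p^{0}(\chi_t(x,\xi))$ appearing in \eqref{eq:Lambda} equals $-p^{0}(x,\xi)$, so $\Lambda$ is exactly the image of
\[
 \Phi:\RR\times(A^*G\setminus 0)\To T^*(\RR\times G)\simeq T^*\RR\times T^*G,\qquad (t,(x,\xi))\longmapsto\big(t,\,-p^{0}(x,\xi),\,\chi_t(x,\xi)\big).
\]
I would then check that $\Phi$ is an embedding: it is injective because the first coordinate recovers $t$ and $\chi_t|_{\Lambda_0}$ is injective onto $\Lambda_t$ (Proposition \ref{prop:time-lag}); it is an immersion because $d\Phi(\partial_t)$ has nonzero component along $T\RR$ whereas $d\Phi$ maps $T\Lambda_0$ isomorphically (via $d\chi_t$) onto a subspace of the $T^*G$-factor; and it admits the continuous inverse $(t,\tau,\gamma,\xi)\mapsto(t,\chi_{-t}(\gamma,\xi))$ on its image. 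Hence $\Lambda$ is an embedded submanifold of $T^*(\RR\times G)\setminus 0$ of dimension $1+\dim A^*G=1+n=\tfrac12\dim T^*(\RR\times G)$. Conicity follows from Proposition \ref{prop:hamilton-flow}(2) and the degree-one homogeneity of $p^{0}$: for $a>0$ the dilate of $\big(t,-p^{0}(x,\xi),\chi_t(x,\xi)\big)$ is $\big(t,-p^{0}(x,a\xi),\chi_t(x,a\xi)\big)\in\Lambda$.

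The substantive step, and the one I expect to require the most care, is showing $\Phi^*\omega=0$ for $\omega=dt\wedge d\tau+\omega_G$; by the dimension count this makes $\Lambda$ Lagrangian. Writing $u=(x,\xi)\in\Lambda_0$: on the $T^*\RR$-part one gets $\Phi^*(dt\wedge d\tau)=-\,dt\wedge d(p^{0}|_{\Lambda_0})$, since $\tau=-p^{0}(u)$ does not depend on $t$; on the $T^*G$-part $d\Phi$ sends $\partial_t\mapsto H_{p^{0}}(\chi_t u)$ and $v\in T_u\Lambda_0\mapsto d\chi_t(v)$, and using $\omega_G(H_{p^{0}},H_{p^{0}})=0$, the defining relation $\omega_G(H_{p^{0}},\cdot)=dp^{0}$, the facts that $\chi_t$ is a symplectomorphism with $p^{0}\circ\chi_t=p^{0}$, and that $\omega_G|_{\Lambda_0}=0$ because $\Lambda_0$ is Lagrangian, one computes $\Phi^*\omega_G=dt\wedge d(p^{0}|_{\Lambda_0})$. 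The two terms cancel, so $\Phi^*\omega=0$. Conceptually this cancellation is precisely the statement that $\Lambda$ is the bicharacteristic flow-out of the isotropic submanifold $\Sigma_0:=\{(0,-p^{0}(u),u):u\in\Lambda_0\}$ inside the characteristic set $\{\tau+p^{0}=0\}$ under the Hamiltonian flow of the symbol $\tau+p^{0}$ of $\partial_t+iP$ on $T^*(\RR\times G)$ --- whose Hamiltonian field is $\partial_t+H_{p^{0}}$ --- so one could alternatively just invoke the flow-out construction of \cite[Chapter 21]{Horm-classics}; the only delicate point is keeping the sign conventions of Section \ref{sec:subprinc} consistent, which is exactly what forces $\tau=-p^{0}$ (rather than $+p^{0}$) in \eqref{eq:Lambda}.

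Finally, for $i_t:\gamma\mapsto(t,\gamma)$ the transpose of $di_t$ is the projection $T^*_t\RR\times T^*_\gamma G\to T^*_\gamma G$ that drops the $\tau$-slot, so $i_t^*\Lambda=\{(\gamma,\eta):(t,\tau,\gamma,\eta)\in\Lambda\text{ for some }\tau\}$; by \eqref{eq:Lambda} this is $\{\chi_t(x,\xi):(x,\xi)\in A^*G\setminus 0\}=\chi_t(\Lambda_0)=\Lambda_t$. If the pull-back $i_t^*$ of \cite{LV1} is phrased through transversal intersection with the fibre $T^*(\{t\}\times G)$, I would additionally record that $di_t$ is injective with image $\{0\}\times T_\gamma G$, transversal to $\Lambda$ by the same $T\RR$-component argument used above for the immersion property.
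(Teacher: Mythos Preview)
Your argument is correct, and the overall architecture---parametrize $\Lambda$ explicitly, check the dimension, verify it is isotropic, and read off the fibre at time $t$---matches the paper's. The one substantive difference is in how you establish isotropy. You pull back the symplectic two-form $\omega=dt\wedge d\tau+\omega_G$ and show the two summands cancel, after first using the conservation law $p^{0}\circ\chi_t=p^{0}$ to make the $\tau$-coordinate independent of $t$. The paper instead exploits the conicity of $\Lambda$: for a conic half-dimensional submanifold it suffices to show that the canonical one-form $\alpha_{\RR\times G}=\tau\,dt+\alpha_G$ vanishes, and the computation $\phi^*\alpha_{\RR\times G}=-p^{0}(\chi)\,dt+\alpha_G(H_{p^{0}})\,dt+(\chi_t)^*\alpha_G$ then reduces to Euler's identity $\alpha_G(H_{p^{0}})=\sum\xi_j\,\partial_{\xi_j}p^{0}=p^{0}$ together with the fact that a homogeneous symplectomorphism preserves $\alpha_G$. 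The paper's route is marginally shorter and never needs the observation that $p^{0}$ is flow-invariant; yours is more robust (it does not rely on homogeneity for the isotropy step) and the flow-out interpretation you add---$\Lambda$ as the bicharacteristic flow-out of $\{0\}\times\Lambda_0$ under $\tau+p^{0}$---is a useful conceptual complement that the paper does not spell out.
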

\begin{proof}  the map $F(t,\delta)=(t,\chi_t(\delta))$ being a diffeomorphism of $\RR\times (T^{*}_{\mbt} G)$, the set  $F(\RR\times\Lambda_0)$  is a submanifold of  $\RR\times T^*G$ and therefore $\Lambda$, as a graph,  is a submanifold of $T^*( \RR\times G)$, obviously homogeneous and  $\phi(t,x,\xi) =  (t, -p^{0}(\chi_t(x,\xi)), \chi_t(x,\xi) )$ is a parametrization. 
   
We check that $\Lambda$ is lagrangian, which is equivalent by homogeneity of $\Lambda$ to the vanishing on it of the canonical one form $\alpha_{\RR\times G} = \tau dt + \alpha_G$, that is to the vanishing of the  one form on $\RR\times\Lambda_0$ defined by  $\phi^{*}\alpha_{\RR\times G}$.  We have:
\begin{align*}
\phi^{*}\alpha_{\RR\times G} & = -p^{0}(\chi) dt + \chi^{*}\alpha_G \\
 & =  -p^{0}(\chi) dt + \alpha_G( \frac{\partial \chi}{\partial t})dt +  (\chi_t)^*\alpha_G
\end{align*}
Since   $\chi_t$ is a homogeneous symplectomorphism of $T^{*}_{\mbt} G$, the one form $(\chi_t)^*\alpha_G$ vanishes on $\Lambda_0$. On the other hand, by homogeneity of $ p^0$ and Euler formula:
\begin{equation}
 \alpha_G( \frac{\partial \chi}{\partial t})  = \alpha_G(H_{p^{0}}(\chi_t)) = \sum_j \xi_j \frac{\partial p^{0}}{\partial \xi_j}(\chi)  = p^0(\chi) ,
\end{equation}
which proves the required assertion. 
 \end{proof}
There are  two natural structures of groupoid on $\RR\times G$, with different unit space:
 \[
  \widetilde{G} = \RR\times G \rightrightarrows \RR\times G^{(0)} \text{ and } G_+ = \RR\times G \rightrightarrows   G^{(0)} 
 \]
The first is the (constant) family of groupoids $G_t=G$ parametrized by the space $\RR$, and the second one is the cartesian product of $G$ with the additive group $\RR$. 
The corresponding symplectic groupoid structures on $T^*(\RR\times G)$ will be denoted by: 
\begin{equation}
 \widetilde{\Gamma} = T^*(\RR\times G) \rightrightarrows  \pr_2^* A^*G  \ \text{ and }\  \Gamma_+ =  T^*(\RR\times G) \rightrightarrows   \RR\times A^*G
\end{equation}
where $\pr_2 : \RR\times M\to M$ is the second projection and $\RR\times A^*G$ denotes the bundle over $M$ with fibers $\RR\times A_x^*G$.

We will say that a subset $A\subset \RR\times X$ is $\RR$-proper if $\pr_{1} : A \to \RR$ is proper, that is 
$$ A \cap [a,b]\times X \text{ is compact for any } a, b\in \RR. $$
We will call support of $A\subset E$ the set $\supp A = \pi(A)\subset X$ for any bundle map $\pi : E \to X$. 
\begin{prop}\label{prop:more-on-Lambda}
The submanifold $\Lambda$ of $T^*(\RR\times G)$ satisfies the following:
\begin{enumerate}
\item It is contained in $(T^*\RR\setminus 0)\times \dt{T}^{*}G$ and closed in $T^*\RR\times(T^{*}_{\mbt} G)$.
\item It is both an invertible $\widetilde{G}$-relation and a family $G_+$-relation.
\item The support of $\Lambda$ is $\RR$-proper. 
\end{enumerate}
\end{prop}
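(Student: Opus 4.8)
The plan is to prove the three statements in turn, relying on the parametrization $\phi(t,x,\xi)=\big(t,-p^{0}(\chi_{t}(x,\xi)),\chi_{t}(x,\xi)\big)$ of $\Lambda$ obtained in the previous proposition, together with Propositions \ref{prop:hamilton-flow} and \ref{prop:time-lag}. For the first assertion, recall that $p^{0}$ is a first integral of its own Hamiltonian flow, so $p^{0}(\chi_{t}(x,\xi))=p^{0}(x,\xi)$; since $\rg$ restricts to the identity on $A^{*}G=(T^{*}G)^{(0)}$, on $\Lambda_{0}$ we have $p^{0}=p^{0}_{0}$, which is nowhere zero by ellipticity of $P$. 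Hence the $T^{*}\RR$-component of any point of $\Lambda$ is $(t,-p^{0}_{0}(x,\xi))\in T^{*}\RR\setminus 0$, while its $T^{*}G$-component lies in $\Lambda_{t}\subset\dt{T}^{*}G$ by Proposition \ref{prop:time-lag}; this gives $\Lambda\subset(T^{*}\RR\setminus 0)\times\dt{T}^{*}G$. For closedness I would introduce the diffeomorphism
\[
 \Psi : T^{*}\RR\times(T^{*}_{\mbt} G)\To T^{*}\RR\times(T^{*}_{\mbt} G),\qquad \Psi(t,\tau,\delta)=\big(t,\ \tau+p^{0}(\delta),\ \chi_{-t}(\delta)\big),
\]
with inverse $(t,\sigma,\epsilon)\mapsto\big(t,\sigma-p^{0}(\chi_{t}\epsilon),\chi_{t}\epsilon\big)$ (well defined and smooth by completeness of $\chi$ and Remark \ref{rmk:prop-flow}). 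One checks directly that $\Psi(\Lambda)=0_{T^{*}\RR}\times\Lambda_{0}$, and $\Lambda_{0}=A^{*}G\cap T^{*}_{\mbt} G$ is closed in $T^{*}_{\mbt} G$ because the unit section $A^{*}G\hookrightarrow T^{*}G$ is a closed embedding (as $M$ is compact), while $A^{*}G\cap\ker\rg$ is merely the zero section. Thus $\Psi(\Lambda)$ is closed, and so is $\Lambda$.

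For the second assertion, the previous proposition gives that $\Lambda$ is a conic Lagrangian submanifold, and by (1) it misses the zero section of $T^{*}(\RR\times G)$. I would then compute the relevant kernels: since $\widetilde G$ is a constant family of groupoids, $\ker r_{\widetilde\Gamma}=T^{*}\RR\times\ker\rg$ and $\ker s_{\widetilde\Gamma}=T^{*}\RR\times\ker\sg$, whereas for the product $G_{+}$ the $\RR$-direction is the abelian group $\RR=\mathfrak h^{*}$ and $\ker r_{\Gamma_{+}}=0_{T^{*}\RR}\times\ker\rg$, $\ker s_{\Gamma_{+}}=0_{T^{*}\RR}\times\ker\sg$. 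In either case $\Lambda$ meets neither kernel, by (1), since $\dt{T}^{*}G$ avoids $\ker\rg\cup\ker\sg$; hence $\Lambda$ is a $\widetilde G$-relation and a $G_{+}$-relation in the appropriate sense. Invertibility as a $\widetilde G$-relation follows from $s_{\widetilde\Gamma}\circ\phi(t,x,\xi)=\big(t,(x,\xi)\big)$ — using $\sg\circ\chi_{t}=\sg$ from Proposition \ref{prop:hamilton-flow}(3) and $\sg|_{A^{*}G}=\mathrm{id}$ — so that $s_{\widetilde\Gamma}|_{\Lambda}=\phi^{-1}$ is a diffeomorphism onto the open subset $\RR\times(A^{*}G\setminus 0)$ of $\pr_2^{*}A^{*}G$; and from $r_{\widetilde\Gamma}\circ\phi(t,x,\xi)=\big(t,\rg(\chi_{t}(x,\xi))\big)$, which is a diffeomorphism of $\RR\times\Lambda_{0}$ because, as recorded in the proof of Proposition \ref{prop:time-lag}, $\rg\circ\chi|_{\RR\times\Lambda_{0}}$ is the complete flow of $(\rg)_{*}H_{p^{0}}$ on $\Lambda_{0}$. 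That $\Lambda$ is a \emph{family} $G_{+}$-relation is then read off the characterization of \cite{LV1}: the slices $i_{t}^{*}\Lambda=\Lambda_{t}$ are $G$-relations depending smoothly on $t$, and the one-parameter group law $\Lambda_{t_{1}}\Lambda_{t_{2}}=\Lambda_{t_{1}+t_{2}}$ of Proposition \ref{prop:time-lag} is exactly the compatibility of $\Lambda$ with the $\RR$-group factor of $G_{+}$.

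For the third assertion, fix a Euclidean metric on $AG$ and put $K=\{(x,\xi)\in A^{*}G : |\xi|=1\}$, which is compact since $M$ is. Because $\chi_{t}$ commutes with dilations (Proposition \ref{prop:hamilton-flow}(2)) and dilations fix base points, $\pi(\Lambda_{t})=\pi(\chi_{t}(K))$ where $\pi:T^{*}G\to G$; hence, for $a\le b$,
\[
 \supp\Lambda\cap\big([a,b]\times G\big)=\big\{\,(t,\pi(\chi_{t}(\kappa)))\ ;\ t\in[a,b],\ \kappa\in K\,\big\},
\]
which is the image of the compact set $[a,b]\times K$ under the continuous map $(t,\kappa)\mapsto(t,\pi(\chi_{t}(\kappa)))$, hence compact. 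So $\supp\Lambda$ is $\RR$-proper.

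I expect the main obstacle to be purely organizational: keeping the two cotangent groupoid structures $\widetilde\Gamma$ and $\Gamma_{+}$ — their source, range and kernels — correctly apart, and matching the verifications above with the precise definitions of \enquote{invertible $\widetilde G$-relation} and \enquote{family $G_{+}$-relation} from \cite{LV1}. The geometric content is entirely contained in Propositions \ref{prop:hamilton-flow} and \ref{prop:time-lag}; the only thing to be produced by hand is the auxiliary diffeomorphism $\Psi$ used for closedness in (1).
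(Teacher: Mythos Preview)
Your argument for parts (1) and (3) is correct and essentially matches the paper's: you use a slightly enlarged diffeomorphism $\Psi$ (tracking the $\tau$ variable as well) where the paper only straightens the flow via $(t,\lambda)\mapsto(t,\chi_t(\lambda))$, and for (3) you make the compactness argument fully explicit rather than leaving it as a remark. Your treatment of the $\widetilde G$-relation part of (2) also coincides with the paper's.

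The one point that needs tightening is your justification that $\Lambda$ is a \emph{family} $G_+$-relation. The one-parameter group law $\Lambda_{t_1}\Lambda_{t_2}=\Lambda_{t_1+t_2}$ is not what the definition in \cite{LV1} asks for; the relevant condition there (see \cite[Remark~15 and the discussion following it]{LV1}) is a transversality statement, namely that the composite $s_{G_+}\!\circ\pi|_\Lambda:\Lambda\to M=(G_+)^{(0)}$ is a submersion. The paper checks this directly: since $(t,\tau,\lambda)\mapsto\sg(\lambda)$ is a submersion $\Lambda\to A^*G\setminus 0$ (indeed a diffeomorphism, by your own computation of $s_{\widetilde\Gamma}|_\Lambda$), composing with the bundle projection $A^*G\to M$ gives the required submersion. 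Your observation that the slices $\Lambda_t$ are $G$-relations varying smoothly in $t$ is morally pointing at the same fact, but the slicing by $t$ is adapted to the $\widetilde G$ structure rather than the $G_+$ structure, and invoking the group law muddies the issue. Replace that sentence by the submersion check and the proof is complete.
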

\begin{proof}
\begin{enumerate}
\item We first check that $\Lambda$ is closed in $T^*\RR\times T^{*}_{\mbt} G$.
 The map  
 \[
  \phi : \RR\times T^{*}_{\mbt} G\longrightarrow\RR\times T^{*}_{\mbt} G,\quad (t,\lambda)\longmapsto (t,\chi(t,\lambda))
 \]
is a diffeomorphism and $ \RR\times \Lambda_0= \RR\times T^{*}_{\mbt} G\cap \RR\times A^*G$ is closed in $\RR\times T^{*}_{\mbt} G$ since $\RR\times A^*G$ is closed in $T^*(\RR\times G)$. Thus $\phi(\RR\times \Lambda_0)$ is closed in $\RR\times T^{*}_{\mbt} G$. It follows that 
\[
 \Lambda = \{(t,-p^{0}(\lambda),\lambda)\in T^*\RR\times T^{*}_{\mbt} G \ ;\  (t,\lambda)\in \phi(\RR\times \Lambda_0) \} 
\]
is closed in $T^*\RR\times T^{*}_{\mbt} G$. 
\item By remark \ref{rmk:prop-flow},  the inclusion $\Lambda\subset T^*\RR\times \dt{T}^*G$ holds true and by ellipticity of $P$, the function 
$p^{0} = p^{0}\circ \rg$ does not vanish on $ \dt{T}^*G$, hence 
\[
 \Lambda\subset (T^*\RR\setminus 0)\times \dt{T}^*G.
\]

\item
Since $r_{\widetilde{{}_{\Gamma}}}(t,\tau,\lambda)= (t,\rg(\lambda))$ and  $s_{\widetilde{{}_{\Gamma}}}(t,\tau,\lambda)= (t,\sg(\lambda))$, we immediately deduce 
the invertibility of $\Lambda$ from the invertibility of the $G$-relations $\Lambda_t$ for all $t$. 
\item
Since $r_{{}_{\Gamma_{+}}}(t,\tau,\lambda)= (\tau,\rg(\lambda))$,  $s_{{}_{\Gamma_{+}}}(t,\tau,\lambda)= (\tau,\sg(\lambda))$ and $ \Lambda\subset (T^*\RR\setminus 0)\times \dt{T}^*G$ we get $\Lambda\cap \ker r_{{}_{\Gamma_{+}}} = \emptyset$  and the same for $s_{{}_{\Gamma_{+}}}$ so $\Lambda$ is a $G_+$-relation. Moreover, 
denoting by $\pi,\pi_0,\pi_2$ the natural projection maps:
 \[
  \pi:  T^*(\RR\times G)\to \RR\times G,\quad  \pi_0: A^*G\to M,\quad \pi_2 : T^*(\RR\times G)\to T^*G,
 \]
 since $(t,\tau,\lambda)\in\Lambda\mapsto \sg(\lambda)\in A^*G$ is a submersion, the composition
\[
 \pi_0\circ \sg\circ \pi_2 = s_{G_+}\circ \pi|_{\Lambda} : \Lambda \longrightarrow A^*G_+\longrightarrow M, (t,\tau, \gamma,\xi)\longmapsto s(\gamma) 
\]
is a submersion. This proves that $\Lambda$ is a $G_+$-family by \cite[Remark 15 and below]{LV1}
\item This is a straightforward consequence  of the compacity of $M=G^{(0)}$, of the homogeneity of $\chi$, and of standard continuity arguments.  
\end{enumerate}
\end{proof}

\section{Approximation of $e^{-itP}$ by $G$-FIOs}\label{sec:U}

The manifold $\RR\times G$ will be provided by the pull back of the half density bundle used for $G$, and it will still be denoted by $\Omega^{1/2}$.

Let $\Lambda$ be the $\widetilde{G}$-relation defined by  $P$ as in \eqref{eq:Lambda}. 
Since $\Lambda$ is a family $\widetilde{G}$-relation,  any $U\in I^{m}(\RR\times G,\Lambda;\Omega^{1/2})$ is a Fourier integral $\widetilde{G}$-operator (see \cite{LV1} for the details),  also given as a distribution on $\widetilde{G}$  by the  $C^{\infty}$ family $U(t)\in I^{m+1/4}(G,\Lambda_t;\Omega^{1/2})$ of $G$-FIOs defined by $U(t)=i_t^*(U)$. Here $i_t :G\to \RR\times G$ is the inclusion $i_{t}(\gamma)=(t,\gamma)$. The converse is true: any such family gives a single distribution in $ I^{m}(\RR\times G,\Lambda;\Omega^{1/2})$.  
\begin{thm}\label{thm:WF_set_e^{itP}}
There exists a   Fourier integral $\widetilde{G}$-operator $U\in I^{-\frac14 +(n^{(1)} - n^{(0)})/4}(\RR\times G,\Lambda;\Omega^{1/2})$ with $\RR$-proper support such that:
\begin{equation}
 (\frac{\partial}{\partial t} + iP) U \in C^{\infty}(\RR\times G, \Omega^{1/2}).
\end{equation}
Moreover, if  $E = (e^{-itP})_{t\in\RR}$ denotes the one parameter group defined in Section \ref{sec:generalite-E-itP},  we have:
\begin{equation}\label{thm:param-E}
  E - U \in C^{\infty}(\RR, \cH^{\infty} ).
\end{equation}
\end{thm}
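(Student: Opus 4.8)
### Proof Strategy

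The plan is to construct $U$ by the standard geometric-optics iteration, adapted to the groupoid setting, and then to upgrade the qualitative statement $(\partial_t+iP)U\in C^\infty$ to the quantitative statement $E-U\in C^\infty(\RR,\cH^\infty)$ using the functional-analytic results of Section~\ref{sec:generalite-E-itP} together with the embedding $C^*_r(G)\hookrightarrow\cD'(G)$.

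\textbf{Construction of $U$ by symbol iteration.} First I would look for $U\sim\sum_{j\ge 0}U_j$ with $U_j\in I^{-\frac14-j+(n^{(1)}-n^{(0)})/4}(\RR\times G,\Lambda;\Omega^{1/2})$. Write $L=\partial_t+iP$. Because $\Lambda$ is the flowout of $\Lambda_0$ under $\chi_t$ and $\tau+p^{0}=0$ on $\Lambda$, the operator $L$ is, on $\Lambda$, of real principal type in the sense that its principal symbol $i(\tau+p^{0})$ vanishes on $\Lambda$; hence Theorem~\ref{prop:adapt-25-2-4} applies to $L$ acting on $U_j$ (here the relevant PDO on $\RR\times G$ is $\partial_t+iP$, whose principal $G$-symbol is $i(\tau+p)$ and which vanishes on $\Lambda$). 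It gives $LU_j\in I^{(n^{(1)}-n^{(0)})/4-j}(\RR\times G,\Lambda;\Omega^{1/2})$ with principal symbol $-i\cL_{\tau+p^{0}}\sigma_{\pr}(U_j)$ plus a subprincipal contribution. The Hamilton vector field of $\tau+p^{0}$ on $T^*(\RR\times G)$ is $\partial_t+H_{p^{0}}$, which is exactly the generator of the flow $F(t,\delta)=(t,\chi_t\delta)$ parametrizing $\Lambda$; so $\cL_{\tau+p^{0}}$ restricted to $\Lambda$ is, in the flow coordinates, essentially $\partial_t$ plus a zeroth-order term coming from $\operatorname{div}(H_{p^{0}})$ and the subprincipal symbol $p^{1s}$. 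The transport equations $\cL_{\tau+p^{0}}\sigma_{\pr}(U_0)=0$ and $\cL_{\tau+p^{0}}\sigma_{\pr}(U_{j+1})=(\text{error from }U_0,\dots,U_j)$ are then first-order linear ODEs along the flow lines; they are globally solvable because $\chi$ is complete (Proposition~\ref{prop:hamilton-flow}) and each $\Lambda_t$ is invertible, with the initial condition at $t=0$ chosen so that $\sigma_{\pr}(U_0)|_{t=0}$ is the symbol of the identity operator $\delta_M\in\Psi^0_{G,c}$ (i.e. $U_0(0)$ has the same principal symbol as $\delta_M$), and $\sigma_{\pr}(U_{j+1})|_{t=0}=0$ for $j\ge 0$. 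Right-invariance of $H_{p^{0}}$ guarantees the solutions stay in the class of $\widetilde G$-relation symbols; $\RR$-properness of $\operatorname{supp}\Lambda$ (Proposition~\ref{prop:more-on-Lambda}(3)) lets us Borel-sum the $U_j$ into a genuine $U$ with $\RR$-proper support and $LU\in C^\infty(\RR\times G,\Omega^{1/2})$. Cutting off near $\Lambda_0$ so that $U(0)-\delta_M$ is smoothing completes this step.

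\textbf{From $LU\in C^\infty$ to $E-U\in C^\infty(\RR,\cH^\infty)$.} Set $R=LU\in C^\infty(\RR\times G,\Omega^{1/2})$ and $g=U(0)-\delta_M$, which we have arranged to lie in $C^\infty_c(G)\subset\cH^\infty$ (or at worst in $\cH^\infty$). View $U$ as a $C^\infty$ family $t\mapsto U(t)$ of elements of $C^*_r(G)$ (using the embedding of the previous paragraph together with the fact that each $U(t)\in\Phi^0_{G,c}(\Lambda_t)\subset\cM(C^*_r(G))$, and in fact here $\Phi^{<0}$ so $U(t)\in C^*_r(G)$). Then $U$ solves the Cauchy problem $(\partial_t+iP)U=R$, $U(0)=\delta_M+g$ in the $C^*_r(G)$-module sense. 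By Theorem~\ref{prop:Cauchy} (with $A=H=C^*_r(G)$, the regular self-adjoint operator $P$, and the observation that $R$ and $g$ have the needed regularity — $R\in C^\infty(\RR,\cH^\infty)$ since for fixed $t$, $R(t)\in C^\infty_c(G)$ and one checks the family is smooth in $\cH^\infty$ using that $LU_j$ improves order, so $R\in\bigcap_k C^\infty(\RR,H^k)$), the unique such solution is
\begin{equation*}
U(t)=e^{-itP}(\delta_M+g)+\int_0^t e^{i(s-t)P}R(s)\,ds = e^{-itP}+\Big(e^{-itP}g+\int_0^t e^{i(s-t)P}R(s)\,ds\Big).
\end{equation*}
Hence $U(t)-E(t)=e^{-itP}g+\int_0^t e^{i(s-t)P}R(s)\,ds$. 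Now $e^{-itP}$ preserves every $H^k$ and depends smoothly on $t$ with values in $\cL(H^k)$ (Section~\ref{sec:generalite-E-itP}); since $g\in\cH^\infty=\cap_k(H^k\cap(H^k)^*)$ and $\cH^\infty$ is invariant under $e^{\mp itP}$ (as it is the intersection of the $P^k$-domains on both sides, and $e^{-itP}$ commutes with $P$), we get $t\mapsto e^{-itP}g\in C^\infty(\RR,\cH^\infty)$; similarly the integral term, being an integral of a $C^\infty(\RR,\cH^\infty)$-valued function composed with the smoothing propagators, lies in $C^\infty(\RR,\cH^\infty)$. Therefore $E-U\in C^\infty(\RR,\cH^\infty)$, which is \eqref{thm:param-E}.

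\textbf{Main obstacle.} The genuinely delicate point is the bookkeeping that makes the error term land in $\cH^\infty$ rather than merely in $C^\infty(\RR\times G)$. Qualitative smoothness of $LU$ on the manifold $\RR\times G$ is automatic from the symbol iteration, but to conclude membership in $\cH^\infty=H^\infty\cap(H^\infty)^*$ one must know that $R(t)$ has uniformly (in $t$, on compacta) controlled $H^k$-norms for every $k$ and on both the left and the right — i.e. that $R\in\bigcap_k C^\infty(\RR,\cH^k)$ where $\cH^k=H^k\cap(H^k)^*$. This requires that the $U_j$ are compactly supported $G$-FIOs for the invertible relations $\Lambda_t$ (so that $\Phi^{<0}_{G,c}(\Lambda_t)\subset C^*_r(G)$ and $Q\,U_j\in\cL(H^k,H^{k-m})$ for $Q\in\Psi^m_{G,c}$, by the mapping properties recalled in Section~\ref{sec:generalities}), plus the $\RR$-properness of $\operatorname{supp}\Lambda$ to get local-uniform-in-$t$ estimates, plus an argument that the Borel sum can be arranged so the remainder after $N$ terms is in $\cH^N$ with the right continuity in $t$. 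Assembling these uniformities — and checking that taking adjoints (the $(H^k)^*$ side) is compatible with the family structure, which it is because $\iota^*$ is an anti-automorphism carrying $\Lambda_t$ to $\Lambda_t^{-1}=\Lambda_{-t}$ — is the technical heart of the proof; once it is in place, Theorem~\ref{prop:Cauchy} does the rest mechanically.
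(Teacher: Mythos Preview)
Your overall architecture is correct and matches the paper: symbol iteration along $\Lambda$, solve transport equations using completeness of $\chi$, Borel sum to get $(\partial_t+iP)U\in C^\infty$, then invoke Theorem~\ref{prop:Cauchy} to write $U(t)-e^{-itP}$ as an integral involving $e^{i(s-t)P}R(s)$ and conclude.

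There is however a genuine gap in the first step. You write that Theorem~\ref{prop:adapt-25-2-4} applies to $L=\partial_t+iP$ acting on $U_j$, calling $\partial_t+iP$ ``the relevant PDO on $\RR\times G$''. But $\partial_t+iP$ is \emph{not} a pseudodifferential $G$-operator for either groupoid structure on $\RR\times G$: on $\widetilde G=\RR\times G\rightrightarrows\RR\times M$ the operator $\partial_t$ differentiates in the unit-space direction and is not a $\widetilde G$-operator, while on $G_+=\RR\times G\rightrightarrows M$ the term $\delta_{t-s}\otimes P$ fails to be a $G_+$-PDO (its symbol $p(x,\xi)$ does not decay in the $\tau$-direction). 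So Theorem~\ref{prop:adapt-25-2-4} cannot be invoked directly, and this is precisely the point where work is needed. The paper handles it in two ways: either by a direct local computation (writing $U$ with a phase $\langle\gamma,\xi\rangle-h(t,\xi)$ and computing $(\partial_t+iP)U$ by hand, which reproduces the formula $\cL_{\tau+p^0}u+i\,p^{1s}u$), or by a microlocal decomposition $\delta_{t-s}\otimes P=P_1+P_2$ where $P_1\in\Psi^1_{G_+}$ has the same symbol as $P$ near $r_{\Gamma_+}(\Lambda)$ and $\WF{P_2}\cdot\Lambda=\emptyset$, so that Theorem~\ref{prop:adapt-25-2-4} applies to $\partial_t+iP_1$ on $G_+$ while $P_2U$ is smooth. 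You need one of these arguments.

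A second remark: your ``main obstacle'' paragraph is aimed at the wrong difficulty and is more elaborate than necessary. Once $U$ is built with $\RR$-proper support (and $P$ compactly supported), the error $R=(\partial_t+iP)U$ automatically has $\RR$-proper support, hence $R(t)\in C^\infty_c(G)$ for every $t$, and $R\in C^\infty(\RR,C^\infty_c(G))\subset C^\infty(\RR,\cH^\infty)$ directly. No mapping properties of FIOs, no uniform-in-$t$ $H^k$ estimates, no adjoint bookkeeping are needed here; the paper also modifies $U$ by a smooth cutoff so that $U(0)=\Id$ exactly, eliminating your $g$ term. The $\cH^\infty$ membership of $E-U$ then follows immediately from Section~\ref{sec:generalite-E-itP}.
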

\begin{rmk}\ 
\begin{enumerate}
 \item It follows that $(E(t))_{t\in\RR}$ is  a $C^{\infty}$ family of distributions, equivalently $E \in \cD'_{\pr_{1}}(\RR\times G,\Omega^{1/2})$.
 \item Recall that $\cH^\infty \subset C^{\infty,0}_{\orb}(G)$ by Section \ref{sec:Sobolev}, in particular the error term \eqref{thm:param-E} is $C^{\infty}$ on  $\RR\times G_{\cO}$ for any orbit $\cO\subset M$.
 \item Theorem \ref{thm:WF_set_e^{itP}} also gives information about the operators $e^{itP_{x}}$ on the (usually non compact, complete, with bounded geometry) manifolds $G_{x}$, $x\in M$.  In the latter situation, we refer to \cite{mazzucato-nistor2006} for related results.  
\end{enumerate}
\end{rmk}
\begin{proof}[Proof of the theorem]
 Let  $U\in I^{m}(\RR\times G,\Lambda;\Omega^{1/2})$. We first check that:
 \begin{equation}\label{eq:first-error-term}
 \frac{\partial}{\partial t}U\in I^{m+1}(\RR\times G,\Lambda; \Omega^{1/2}) \text{ and } PU \in I^{m+1}(\RR\times G,\Lambda; \Omega^{1/2}). 
\end{equation}
The distribution $PU$ is given by convolution product in  $\widetilde{G}$ of the $\widetilde{G}$-PDO $P$ with the $\widetilde{G}$-FIO $U$. 
Therefore,  the composition theorem of \cite{LV1} applies and proves $ PU\in I^{m+1}(\RR\times G,\Lambda,\Omega^{1/2})$. Note that $PU$ is  also a convolution of distributions  in $G_+$:
\begin{equation}
 PU = (\delta_{t-s}\otimes P)*_{G_+}U,
\end{equation}
but this time it is not a composition of $G_+$-FIO because $\delta_{t-s}\otimes P$ fails to be in general a $G_+$-PDO.
The other assertion in \eqref{eq:first-error-term}  can be checked either by  directly differentiating with respect to $t$ the family $(U(t))_{t}$ expressed in local coordinates with oscillatory integrals, or by composing the differential $G_+$-operator $\frac{\partial}{\partial t}$ with the $G_+$-FIO $U$. 

The next task is to prove that the sum $(\frac{\partial}{\partial t} + iP)U$  is actually of order $m$ and has principal symbol given by:
\begin{equation}\label{eq:first-transport}
 \cL_{\tau + p^{0}}(u) + i \sigma^{1s}(P)u. 
\end{equation}
Since $\frac{\partial}{\partial t}+iP$ is neither a $\widetilde{G}$ nor $G_+$ pseudodifferential operator, we can not directly apply Proposition \ref{prop:adapt-25-2-4} to extract the principal symbol of \eqref{eq:first-error-term}. We propose two ways to overcome this difficulty, both containing useful technics. 

\textbf{First approach}. Both distributions $\frac{\partial}{\partial t}U $ and $PU$ are  $\widetilde{G}$-FIO. Working as before in suitable local coordinates $(t,\gamma,\xi)$, and using for instance \cite[Theorems 5 and 6]{LV1}, there exists a $C^{\infty}$ function $h(t,\xi)$, homogeneous of order $1$ in $\xi$, and a symbol $u(t,\xi)$, such that:
\begin{equation}\label{eq:h-using-fougro}
  (t,\tau, \gamma,\xi) \in  \Lambda \iff \tau =  - h'_{t}(t,\xi), \ \gamma = h'_{\xi}(t,\xi),
\end{equation}
\begin{equation}\label{eq:U-using-fougro}
   U(t,\gamma) =  \int e^{i(\langle \gamma, \xi\rangle - h(t,\xi))} u(t,\xi) d\xi.
\end{equation}
It immediately follows that 
\begin{equation}\label{eq:first-err-term-using-fougro}
(\frac{\partial}{\partial t}+iP)U(t,\gamma) =  \int e^{i(\langle \gamma, \xi\rangle - h(t,\xi))} \frac{\partial u}{\partial t}(t,\xi)d\xi + 
   \int e^{i(\langle \gamma, \xi\rangle - h(t,\xi))} i(p(\gamma,\xi)- h'_{t}(t,\xi))u(t,\xi) d\xi.
\end{equation}
The right hand side is again a sum of Lagrangian distributions. The principal symbol of the first term in the right hand side of \eqref{eq:first-err-term-using-fougro} is just the restriction to $\Lambda$ of:
\begin{equation}\label{eq:symbol-partial-t-u}
\frac{\partial u}{\partial t}= \cL_{\tau} u 
 \end{equation}
In the second term, although $ p - h'_{t}$ does not satisfy symbol estimates in $\xi$, the product $ i(p(\gamma,\xi)- h'_{t}(t,\xi))u(t,\xi)$ does  and its leading part, which is represented by $ i(p^{0} - h'_{t})u$, vanishes on $\Lambda_{t}$ for any $t$.
We then reproduce the computations starting with \eqref{eq:concise-developped-PA}, just replacing $h(\xi)$ by $h(t,\xi)$,    $q(\gamma,\xi)$ by $p(\gamma,\xi) - h'_{t}(t,\xi)$ and $a(\xi)$ by $u(t,\xi)$, without omitting an extra factor $i$.   The reminder $e$  is unchanged $e = (p - h'_{t}) - (p^{0}  - h'_{t}) = p - p^{0}$. The vector field $H_{p^{0} - h'_{t}}$ being tangent to $\Lambda_{t}$ for any $t$, we get,  since $h'_{t}$ is independent of $\gamma$:
$$ H_{p^{0} -h'_{t}}  = -\frac{\partial}{\partial \gamma_j}(p^{0} - h'_{t})\frac{\partial }{\partial \xi_j} = -\frac{\partial p^{0}}{\partial \gamma_j}\frac{\partial }{\partial \xi_j} =  H_{p^{0}}. $$
Now we can read the expression for the required principal symbol in  \eqref{al:extract-symbol}:
\begin{equation}
 \cL_{p^{0}}(u) + \left(ie - \frac{1}{2}\sum_k  \frac{\partial^2 {(p^{0}-h'_{t})}}{\partial \xi_k\partial\gamma_k}\right)u. 
\end{equation}
Again, since $h'_{t}$ is independent of $\gamma$ and $p^{0}$ independent of $\xi''$, the last expression simplifies to: 
\begin{equation}\label{eq:symbol-pu}
 \cL_{p^{0}}(u) + \left(ie - \frac{1}{2}\sum_k  \frac{\partial^2 {p^{0}}}{\partial \xi'_k\partial\gamma'_k}\right)u =  \cL_{p^{0}}(u) + i \sigma^{1s}(P)u. 
\end{equation}
Summing up  \eqref{eq:symbol-partial-t-u} and \eqref{eq:symbol-pu}, we conclude that the principal symbol of $(\frac{\partial}{\partial t}+iP)U$ is \eqref{eq:first-transport}.

\bigskip \textbf{Second approach}.  We wish to use Proposition \ref{prop:adapt-25-2-4} in the framework of the groupoid $G_{+}$.   However, we need to have the convolution of a pseudodifferential  $G_{+}$-operator  with a $G_{+}$-FIO. The problem is that the distribution $(\frac{\partial}{\partial t}+i\delta_{t-s}\otimes P)$ is not a $G_+$-pseudodifferential operator, unless $P$ is differential. The trick (similar to the one used in the proof of \cite[Theorem 25.2.4]{Horm-classics}), consists in finding a suitable microlocal approximation $\delta_{t-s}\otimes P =P_{1} + P_{2}$ of $\delta_{t-s}\otimes P $ by a $G_{+}$-PDO $P_{1}$ such that $P_{2}U\in C^{\infty} (\RR\times G)$. For that purpose, observe that we can deduce from (\ref{eq:Lambda}) that  there exists constants $c_1, c_2>0$ such for any $(\tau, x, \xi) \in r_{\Gamma_+}(\Lambda)$, we have \begin{equation} \label{tau_sur_xi}
c_1 \leq \frac{|\tau|} {|\xi|} \leq c_2. 
\end{equation}
Indeed, we know that  $(\tau, x , \xi)=r_{\Gamma_+}(t, -p(\lambda), \lambda)= (-p(\lambda), r_\Gamma (\lambda))$
for some $t \in \RR$ and $\lambda \in \Lambda_t$.
Denoting $\lambda=(\gamma , \eta) \in T^*G\setminus 0$ and $r_\Gamma (\gamma, \eta)= (x, \xi) \in A^*G\setminus 0$, we then get by homegenity of $p_0$, 
$$\frac{|\tau|} {|\xi|}= \frac{|p_0 \circ r_\Gamma(\gamma, \eta)|}{|\xi|}=\frac{|p_0(x, \xi)|}{|\xi |}=|p_0(x,\frac{\xi}{|\xi|})|$$
and the result follows by continuity of $p_0$  and compacity of $M=G^{(0)}$ (which implies that $S^*G=\{(x, \xi) \in A^*G\setminus 0, |\xi|=1\}$ is compact.)
We will use 
\begin{lem}
The distribution $\delta_{t-s}\otimes P$ on $G_+$ can be written $\delta_{t-s}\otimes P=P_1+P_2$ with $P_1$ a $G_+$-pseudodifferential operator and $P_2$ a distribution on $G_+$ such that $\WF{P_2} \subset\{(t, \tau, \lambda) \in T^*G_+\setminus 0, \sg(\lambda)=(x, \xi) \mbox{ with } \frac {|\xi|}{|\tau|} <\varepsilon\}$.
In particular the total symbol of $P_1$ and $P$  coincide in a neighborhood of $r_{\Gamma_+}(\Lambda)$ and one has that $P_2 U \in C^\infty(G_+)$ if $U \in I^{m}(\RR\times G,\Lambda,\Omega^{1/2}) .$
\end{lem}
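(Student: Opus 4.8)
The plan is to mimic the device in the proof of \cite[Theorem~25.2.4]{Horm-classics}: cut off $\delta_{t-s}\otimes P$ microlocally in the cotangent directions, keeping as $P_1$ the part that is honestly a $G_+$-pseudodifferential operator and throwing the rest into $P_2$. First recall why $\delta_{t-s}\otimes P$ is not already a $G_+$-PDO. Since $P\in\Psi^1_{G,c}=I^{1+(n^{(1)}-n^{(0)})/4}(G,M;\Omega^{1/2})$ is conormal to $M$ with $\WF P = A^*G\setminus 0$, and $\WF(\delta_{t-s})=\{(0,\tau)\ ;\ \tau\neq 0\}$, the standard tensor-product estimate gives
\[
 \WF(\delta_{t-s}\otimes P)\subseteq\{(0,\tau;\gamma,\xi)\ ;\ \tau\in\RR,\ (\gamma,\xi)\in\WF P\}\ \cup\ \{(0,\tau;\gamma,0)\ ;\ \tau\neq 0,\ \gamma\in\supp{P}\}.
\]
In local coordinates $(t,x',x'')$ near $M$ trivialising $s$, with $\{0\}\times M=\{t=0,\ x'=0\}$ and $x''$ parametrising the $s$-leaves, one writes $\delta_{t-s}\otimes P=\iint e^{i(t\tau+\langle x',\xi'\rangle)}a(x'',\xi')\,d\tau\,d\xi'$ with $a$ a classical symbol of order $1$ in $\xi'$; but $a(x'',\xi')$ is not a symbol jointly in $(\tau,\xi')$ (it does not decay in $\tau$), and away from $M$ the tensor equals $\delta_{t-s}\otimes(\text{a }C^\infty_c\text{ function})$, which is singular off $\{0\}\times M$. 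This is exactly what the splitting must repair.

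Fix $\varepsilon\in(0,1/c_2)$, $c_1,c_2$ as in \eqref{tau_sur_xi}. Let $\Phi$ be a properly supported pseudodifferential operator on $\RR\times G$ whose symbol is a classical $0$-homogeneous (at infinity) cut-off $\chi(\tau,\xi')$ equal to $1$ for $|\xi'|\ge\tfrac12\varepsilon|\tau|$ and to $0$ for $|\xi'|\le\tfrac14\varepsilon|\tau|$, and set $P_1=\Phi(\delta_{t-s}\otimes P)$, $P_2=(\Id-\Phi)(\delta_{t-s}\otimes P)$, so $\delta_{t-s}\otimes P=P_1+P_2$ with both factors $\RR$-compactly supported. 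On the cone $\{|\xi'|\gtrsim|\tau|\}$ one has $|\tau|\lesssim|\xi'|$, so $\chi\cdot a$ \emph{is} a classical symbol of order $1$ jointly in $(\tau,\xi')$: there $\delta_{t-s}\otimes P$ is conormal to $\{0\}\times M$, while $\Phi$ smooths out everything else; hence $P_1$ is conormal to the unit space of $G_+$ with the correct order, i.e. $P_1\in\Psi^1_{G_+}$. Since $r_{{}_{\Gamma_{+}}}(\Lambda)$ is the graph of $-p^{0}_{0}$ over $A^*G\setminus 0$, \eqref{tau_sur_xi} places it in $\{|\xi|\ge|\tau|/c_2\}$, a cone on which $\chi\equiv 1$ and $\Phi\equiv\Id$ modulo $\Psi^{-\infty}$, so on a conic neighbourhood of $r_{{}_{\Gamma_{+}}}(\Lambda)$ the total symbol of $P_1$ coincides with that of $P$. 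On the other hand $\Id-\Phi$ is microsupported in $\{|\xi'|\le\tfrac14\varepsilon|\tau|\}$, and along conormal covectors to $\{0\}\times M$ over the units the fibre component $\xi$ of $\sg(\lambda)=(x,\xi)$ satisfies $|\xi|=|\xi'|$, while along the $\delta_{t-s}\otimes C^\infty_c$ part it vanishes; therefore $\WF P_2\subseteq\{(t,\tau,\lambda)\in T^*G_+\setminus 0\ ;\ \sg(\lambda)=(x,\xi),\ |\xi|<\varepsilon|\tau|\}$.

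It remains to see $P_2U\in C^\infty(G_+)$. Since $U\in I^{m}(\RR\times G,\Lambda;\Omega^{1/2})$ has $\WF U\subseteq\Lambda$, for $\lambda_2\in\WF U$ with $\RR$-covector $\tau_2$ the $A^*G$-fibre component $\xi$ of $r_{{}_{\Gamma_{+}}}(\lambda_2)\in r_{{}_{\Gamma_{+}}}(\Lambda)$ satisfies $|\xi|\ge|\tau_2|/c_2$ by \eqref{tau_sur_xi}. If $\lambda_1\in\WF P_2$ and $\lambda_2\in\WF U$ were composable in the cotangent groupoid $\Gamma_{+}$, then $s_{{}_{\Gamma_{+}}}(\lambda_1)=r_{{}_{\Gamma_{+}}}(\lambda_2)$; comparing the $\RR$- and $A^*G$-components, the $\sg$-image of the $G$-part of $\lambda_1$ would be an $(x,\xi)$ with simultaneously $|\xi|<\varepsilon|\tau_1|$ and $|\xi|\ge|\tau_1|/c_2$, which is impossible since $\varepsilon<1/c_2$. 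Hence $\WF P_2$ and $\WF U$ do not compose in $\Gamma_{+}$; as $P_2$ is $\RR$-compactly supported and $U$ has $\RR$-proper support, the composition calculus of \cite{LV1} applied to the convolution $P_2*_{G_+}U$ gives $P_2U\in C^\infty(\RR\times G)$.

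I expect the main obstacle to be the conormality bookkeeping for $P_1$: verifying that the cut amplitude $\chi\cdot a$ satisfies the \emph{joint} symbol estimates in $(\tau,\xi')$ precisely on the cone $\{\chi\neq 0\}$, that the local coordinate pictures reassemble into a genuinely globally conormal, $\RR$-compactly supported distribution on $\RR\times G$ representing a $G_+$-PDO in the operator sense, and that this single cut-off automatically absorbs the part of $P$ supported off $M$ into $P_2$ (its wavefront set, being carried on $\xi=0$, lies in $\{|\xi|<\varepsilon|\tau|\}$ for free). Once this is in place, the wavefront bound on $P_2$, the agreement of symbols near $r_{{}_{\Gamma_{+}}}(\Lambda)$, and the non-composability with $\Lambda$ are all short consequences of \eqref{tau_sur_xi}.
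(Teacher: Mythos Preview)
Your approach is correct and follows the same microlocal-cutoff idea as the paper, but the implementations differ in a way worth noting. The paper works directly at the \emph{symbol} level on $A^*G_+=\RR_\tau\times A^*G$: it chooses a homogeneous cutoff $\chi(x,\xi,\tau)$ supported in $\{|\xi|<\varepsilon|\tau|\}$, sets $p_1=(1-\chi)P(x,\xi)$ and $p_2=\chi\,P(x,\xi)$, and defines $P_1,P_2$ as the corresponding oscillatory integrals on $G_+$; the fact that $p_1\in S^1(A^*G_+)$ is then immediate, since on $\supp(1-\chi)$ one has $|\tau|\lesssim 1+|\xi|$. You instead apply a properly supported PDO $\Phi$ on the \emph{manifold} $\RR\times G$ to the distribution $\delta_{t-s}\otimes P$. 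This buys you nothing extra and costs you the very bookkeeping you flag at the end: one must argue that the full amplitude of $\Phi(\delta_{t-s}\otimes P)$, not just its leading term $\chi\cdot a$, is a joint symbol in $(\tau,\xi')$, and that the local pictures patch into a global conormal distribution along $\{0\}\times M$; moreover your cutoff $\chi(\tau,\xi')$ is written in $s$-trivialising coordinates rather than intrinsically on $A^*G_+$. None of this is wrong, but the paper's symbol-level split sidesteps it entirely. Your wavefront estimate for $P_2$, the agreement of symbols near $r_{\Gamma_+}(\Lambda)$, and the non-composability argument from \eqref{tau_sur_xi} are all handled as in the paper.
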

\begin{proof}
Consider a map $\chi$ on $A^*(G_+)=  \RR_\tau \times A^*G $ such that $\chi$ is homogeneous of degree $0$ in the cotangent variables outside a compact set, and such that for a chosen $\varepsilon$ , one has \begin{enumerate}
\item $\chi(x, \xi, \tau)=0$ unless $ 1 < \varepsilon | \tau|$ and $|\xi | <\varepsilon  | \tau|$ ;
\item $\chi(x, \xi, \tau)=1$ if $ 2 < \varepsilon | \tau|$ and $2|\xi | <\varepsilon  | \tau|$ .
\end{enumerate}
If $P(x, \xi)$ is a total symbol for $P$, then one can write $$P(x, \xi)=p_1(x, \xi,\tau)+p_2(x, \xi ,\tau)=(1-\chi(x, \xi, \tau)) P(x,\xi)+\chi(x, \xi,  \tau) P(x,\xi).$$
It is clear that $p_1(x, \xi, \tau) \in S^1(A^*G_+)$, so that the corresponding operator $$P_1 (t,\gamma )=\int e^{i <\kappa(\gamma),\xi>+i<t, \tau>} p_1(r(\gamma),\xi, \tau)d\xi d\tau\, \mu^{1/2}_{s}(\gamma)\mu^{1/2}_{r}(\gamma) \in \Psi^1_{G_+}. $$ Moreover, in the neighbourhood of $r_{\Gamma_+}(\Lambda)$, one has that  $\chi(x, \xi, \tau)=0$, because of (\ref{tau_sur_xi}) and hence the symbol of $P_1$ is the symbol of $P$. \\
Now the wave front of the distribution : 
$$P_2 (t,\gamma)=\int e^{i <\kappa(\gamma),\xi> + i<t, \tau>} \chi(r(\gamma),\xi, \tau)P(r(\gamma), \xi)d\xi d\tau\, \mu^{1/2}_{s}(\gamma)\mu^{1/2}_{r}(\gamma)$$ is such that  if $(t, \tau, \lambda) \in T^*G_+\setminus 0$ and $s_\Gamma(\lambda) =(x, \xi) \in A^*G\setminus 0$ , $r_\Gamma(\lambda) =(y, \eta) \in A^*G\setminus 0$, then $(t, \tau, \lambda) \in \WF{P_2} \implies  \max(\frac {|\xi|}{|\tau|} , \frac {|\eta|}{|\tau|})\leq\varepsilon.$ This implies in particular that $\WF{P_2}. \Lambda=\emptyset.$
\end{proof}
To conclude this second approach, note that the principal symbol of $(\frac{\partial}{\partial t}+iP_1)$ is  equal to $\tau +p_0$ in a neighboorhood of $r_{\Gamma_+}(\Lambda)$ and vanishes on $r_{\Gamma_+}(\Lambda)$, because $(\tau +p_0)\circ r_{\Gamma_+}=\tau +p_0\circ r_\Gamma$ vanishes on $\Lambda$. Thus we may apply Proposition \ref{prop:adapt-25-2-4} with $G_{+}$ as underlying groupoid to the operators $(\frac{\partial}{\partial t}+iP_1)$  and $U$, which allows to recover the formula \eqref{eq:first-transport} for the principal symbol of their product by remarking that the subprincipal symbol of $P_1$ is also equal to the subprincipal of $P$ in a neighboorhood of $r_{\Gamma_+}(\Lambda)$.

The rest of the proof is essentially identical to the proof of \cite[Theorem 29.1.1]{Horm-classics}. 
Indeed the (transport) equation 
\begin{equation}
 \begin{cases}
  ( \frac{\partial}{\partial t}+ \cL_{p^{0}} + i\sigma^{1s}(P))u^0 = 0 \\
  u^0(0,.) = 1 
 \end{cases}
\end{equation}
has a unique solution $u^0\in C^\infty(\Lambda)$, and  $u^0$ homogeneous of degree $0$ with respect to the $\RR_+$ action on each $\Lambda_t$. Let us fix a $\RR$-proper set $\cV\subset \RR\times G$ such that $\supp \Lambda\subset \overset{\circ}{\cV}$. choose $U^0\in I^{(n^{(1)}-n^{(0)} - 1)/4}(\RR\times G,\Lambda,\Omega^{1/2})$ with principal symbol $u^0$ and support in $\cV$. Note that  $U^{0}(0)\in \Psi^{0}_{G,c}$ because  
$\Lambda_0=A^*G\setminus 0$. It follows that:
\begin{equation}
I- U^0(0)\in \Psi^{-1}_{G,c} \text{ and } (\frac{\partial}{\partial t}+iP)U^0 =F^1\in I^{-1+(n^{(1)}-n^{(0)} - 1)/4}(\RR\times G,\Lambda,\Omega^{1/2}).
\end{equation}
Next one chooses $U^1\in I^{-1+(n^{(1)} - n^{(0)}-1)/4}(\RR\times G,\Lambda,\Omega^{1/2})$ with support in $\cV$  and principal symbol $u^1$ solving the transport equation
\begin{equation}
 \begin{cases}
  ( \frac{\partial}{\partial t}+ \cL_{p^{0}} + i\sigma^{1s}(P))u^1 = -f^1 \\
  u^1(0,.) =   \sigma(I- U^0(0))
 \end{cases}
 \end{equation}
 and so on.  We construct in this way a sequence $U^j\in I^{- j+(n^{(1)} - n^{(0)} - 1)/4}(\RR\times G,\Lambda,\Omega^{1/2})$. Finally we choose $ U \in  I^{(n^{(1)} - n^{(0)} - 1)/4}(\RR\times G,\Lambda,\Omega^{1/2})$ with support in $\cV$ such that:
 $$
   U\sim \sum U^j.
 $$
By construction, we get  
\begin{equation}\label{eq:R-S-smoothing}
R := (\frac{\partial}{\partial t}+iP)U \in C^{\infty}(\RR\times G) \quad \text{ and } \quad S := \Id - U(0) \in C^\infty_{c}(G).
\end{equation} 
Modifying $U$ into $U +\varphi S$ with $\varphi\in C^{\infty}_{c}(\RR)$ and $\varphi(0)=1$, we can directly assume that $U(0)=\Id$. Also, the support of $R$ is contained in 
$$ \cV' = \cV \cup  (\RR\times\supp P) \cdot_{\widetilde{G}}\cV =   \cV \cup \{ (t,\gamma)\ ; \ \gamma \in \supp P \cdot \cV_{t}\}. $$ 
The set $\cV'$ is again $\RR$-proper. This implies: 
$$R\in C^{\infty}(\RR, C^{\infty}_{c}(G)) \subset C^{\infty}(\RR, \cH^{\infty} ) $$

We obtain, using (\ref{sol-cauchy-complete}) and following verbatim the proof of \cite[Theorem 29.1.1]{Horm-classics}
\begin{equation}\label{eq:U-E-smoothing}
 U(t) -e^{-itP}=\tilde{R}(t) := i\int_0^t e^{i(t-s)P}R(s) ds 
\end{equation}
Using the results of Section \ref{sec:generalite-E-itP}, we get  $\tilde{R}\in C^\infty(\RR,\cH^{\infty} )$, which ends the proof. 
\end{proof}
The previous theorem is only stated for compactly supported operators, but it admits the following slight generalization: 
\begin{cor}\label{cor:bonus}
Let $T = P_{c}+ S\in \Psi^{1}_{G}$, with $P_{c}\in \Psi^{1}_{G,c}$ satisfying the assumption of Theorem \ref{thm:WF_set_e^{itP}} and $S\in\cH^{\infty}$. There exists    $U\in I^{-\frac14 +(n^{(1)} - n^{(0)})/4}(\RR\times G,\Lambda;\Omega^{1/2})$ with $\RR$-proper support  such that 
\begin{equation}
(\frac{\partial}{\partial t} + iT)U \in  C^{\infty}(\RR, \cH^{\infty}). 
\end{equation}
\end{cor}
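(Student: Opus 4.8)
The plan is to keep for $U$ the Fourier integral $\widetilde{G}$-operator furnished by Theorem \ref{thm:WF_set_e^{itP}} applied to the compactly supported operator $P_{c}$ (no modification is needed, since $S\in\cH^\infty$ has empty wavefront set and therefore $T=P_{c}+S$ and $P_{c}$ have the same principal and sub-principal symbols, hence the same transport equations), and to show that the extra term produced by $S$ is absorbed by the error. Writing, for each $t$,
\[
 \bigl(\tfrac{\partial}{\partial t}+iT\bigr)U = \bigl(\tfrac{\partial}{\partial t}+iP_{c}\bigr)U + i\,S\star U = R + i\,S\star U ,
\]
Theorem \ref{thm:WF_set_e^{itP}} gives $R:=\bigl(\tfrac{\partial}{\partial t}+iP_{c}\bigr)U\in C^\infty(\RR,\cH^\infty)$ (in fact $R\in C^\infty(\RR,C^\infty_c(G))$ there), so the whole statement reduces to the claim that $t\mapsto S\star U(t)$ is a $C^\infty$ curve in $\cH^\infty$ equipped with its natural Fréchet topology.

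First I would settle the pointwise assertion $S\star U(t)\in\cH^\infty$. Since $\Lambda_t$ is an invertible $G$-relation (Proposition \ref{prop:time-lag}) and $U(t)=i_t^*U\in I^{(n^{(1)}-n^{(0)})/4}(G,\Lambda_t;\Omega^{1/2})=\Phi^0_{G,c}(\Lambda_t)$, we have $U(t)\in\cM(C^*_r(G))=\cL(H^0)$. On the other hand $S\in\cH^\infty=\Psi^{-\infty}_G$, so by \cite{Vassout2006} $S$ maps $H^a\to H^b$ boundedly for all $a,b\in\ZZ$, and $S\in\cL(C^*_r(G))$. Associativity of the convolution product on transversal distributions lets us identify left convolution by $S\star U(t)$ with the composition of operators $(S\star\cdot)\circ(U(t)\star\cdot)$; consequently, for all $s,k\in\NN$ the map
\[
 S\star U(t)\;:\;H^s\hookrightarrow H^0\xrightarrow{\;U(t)\;}H^0\xrightarrow{\;S\;}H^k
\]
is bounded, and likewise $S\star U(t)\in\cL(C^*_r(G))$. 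By the characterization of $\Psi^{-\infty}_G$ recalled in Section \ref{sec:Sobolev} this means $S\star U(t)\in\Psi^{-\infty}_G=\cH^\infty$.

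Then I would upgrade this to smoothness in $t$. Because $U\in I^{-1/4+(n^{(1)}-n^{(0)})/4}(\RR\times G,\Lambda;\Omega^{1/2})$ and $\tfrac{\partial}{\partial t}$ is a first order differential $G_+$-operator, one has $\partial_t^jU\in I^{-1/4+j+(n^{(1)}-n^{(0)})/4}(\RR\times G,\Lambda;\Omega^{1/2})$, hence $\partial_t^jU(t)=i_t^*(\partial_t^jU)\in\Phi^j_{G,c}(\Lambda_t)$, which by the Sobolev estimates for Fourier integral $G$-operators of \cite{LV1} maps $H^0\to H^{-j}$ boundedly, uniformly for $t$ in compact intervals thanks to the $\RR$-proper support of $U$. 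Repeating the factorization above with $U(t)$ replaced by $\partial_t^jU(t)$ (now $H^0\to H^{-j}$, then $S:H^{-j}\to H^k$) shows that $\partial_t^j\bigl(S\star U(t)\bigr)=S\star\partial_t^jU(t)\in\cH^\infty$ for every $j$, with locally uniform bounds. Finally, composing the continuous $\RR$-family $t\mapsto\partial_t^jU(t)\in\cM(C^*_r(G))$ with the fixed regularizing multiplier $(1+P^2)^{k/2}\star S\in C^*_r(G)$ — which acts continuously from $\cM(C^*_r(G))$ into the closed ideal $C^*_r(G)$ — yields the continuity of $t\mapsto\partial_t^j\bigl(S\star U(t)\bigr)$ for each seminorm defining the topology of $\cH^\infty$; hence $S\star U\in C^\infty(\RR,\cH^\infty)$, and therefore $\bigl(\tfrac{\partial}{\partial t}+iT\bigr)U=R+iS\star U\in C^\infty(\RR,\cH^\infty)$, while $U$ retains the stated order and $\RR$-proper support. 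The only real work is in this last step: the pointwise membership $S\star U(t)\in\cH^\infty$ is immediate from the operator-theoretic description of regularizing operators, and the obstacle, such as it is, lies in carrying the $C^\infty$-dependence through the Fréchet topology of $\cH^\infty$ uniformly in $t$. (Alternatively, one can prove $e^{-itT}-e^{-itP_{c}}\in C^\infty(\RR,\cH^\infty)$ by a Duhamel expansion, using $S\in\cH^\infty$ and the boundedness of $e^{\pm i\sigma P_{c}}$ on the Sobolev scale, and combine this with \eqref{thm:param-E}; the bookkeeping is essentially the same.)
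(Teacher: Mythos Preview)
Your proposal is correct and follows exactly the same route as the paper: apply Theorem \ref{thm:WF_set_e^{itP}} to $P_c$ to obtain $U$, split $(\partial_t+iT)U=(\partial_t+iP_c)U+iS\star U=R+iS\star U$, and then argue that $S\star U\in C^\infty(\RR,\cH^\infty)$ using the Sobolev boundedness of the $G$-FIOs $U(t)$ from \cite{LV1}. The paper dispatches this last step in a single sentence (``$U_t$ acts continuously on the scale of Sobolev modules, which immediately implies that $SU\in C^\infty(\RR,\cH^\infty)$''), while you have written out the details of how the operator-theoretic characterization of $\Psi^{-\infty}_G=\cH^\infty$ and the order-raising under $\partial_t^j$ combine to give smoothness for the Fr\'echet topology; this elaboration is sound and faithful to what the paper intends.
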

\begin{proof}
Apply  Theorem \ref{thm:WF_set_e^{itP}} to $P_{c}$ and let  $U\in I^{-\frac14 +(n^{(1)} - n^{(0)})/4}(\RR\times G,\Lambda;\Omega^{1/2})$  be the corresponding parametrix. Then 
\begin{equation}
(\frac{\partial}{\partial t} + iT)U = iSU + R, \qquad R \in  C^{\infty}(\RR,\cH^{\infty}). 
\end{equation}
Using the continuity theorems for $G$-FIO \cite{LV1}, one gets that for any $t$, $U_{t}$ acts continuously on the scale of Sobolev modules, which immediately implies that $SU\in C^{\infty}(\RR,\cH^{\infty})$.
\end{proof}

As examples of situations into which Theorem \ref{thm:WF_set_e^{itP}} and Corollary \ref{cor:bonus} apply, we mention:
\begin{enumerate}
\item The pair groupoid $G = X \times X \rightrightarrows X$ of a compact manifold without boundary $X$. Since $X$ itself is an orbit, we have $C^{\infty,0}_{\orb}(G)=C^{\infty}(X\times X,\Omega^{1/2})$ and we just recover the classical result (see \cite[Theorem 29.1.1]{Horm-classics} for instance), after the obvious identification between $G$-operators and continuous linear operators $C^{\infty}(X,\Omega^{1/2}_{X})\to C^{\infty}(X,\Omega^{1/2}_{X})$.
\item The holonomy groupoid $G$ of a compact foliated manifold $X$.   We recover the  construction of the leafwise geometrical optic approximation of $e^{itP}$ given in \cite{Kordyukov1994}. The latter is  worked out for small time and by solving eikonal equations to find the required phases in local coordinates as well as by solving transport equations. Our construction here can be viewed as a complement, available for arbitrary time and regarding the evolution of singularities as well as the kind of Fourier integral operators involved in the problem.
\item $G \rightrightarrows \{ e \}$ a Lie group. Here again, there is only one orbit so $C^{\infty,0}_{\orb}(G)=C^{\infty}(G)$. The result applies for instance to the square root $\sqrt{\Delta}$ of any elliptic laplacian $\Delta = - \sum X_{j}^{2}\in \Diff^{2}_{G}$, viewed as right invariant operators on $G$. That $\sqrt{\Delta} =P_{c}+S \in \Psi^{1}_{G}$ with $\sigma_{G}^{0}(P_{c})=\sqrt{\sum \xi_{j}^{2}}$ follows from \cite{Vassout2006} and we get here the existence of a $C^{\infty}$ family $U_{t}$ of right invariant $FIO$ on $G$ \cite{NielStetk1974,LV1} such that $(\frac{\partial}{\partial t} + i\sqrt{\Delta})U_{t}   \in  C^{\infty}(G)\cap C^{*}_{r}(G)$ for any $t$. 
\item The groupoid $G_{b} \rightrightarrows X$ of the $b$-calculus of a manifold with embedded corners $X$ \cite{Monthubert2003}. We recall that $G_{b}$ is the open submanifold with corners of the $b$-stretched product of R. Melrose $X^{2}_{b}$ in which all the lateral faces are removed.  Identifying $G_{b}$-operators with pseudodifferential operators in the $b$-calculus, and their restrictions at boundary hypersurfaces with indicial operators, we get for any elliptic symmetric $P\in \Psi^{1}_{b}(X)$ in the small calculus the existence of a  $C^{\infty}$ family $U_{t}$  of $b$-FIO on $X$ \cite{Melrose1981,LV1} such that 
 \begin{equation}\label{eq:b-U-int}
  (\frac{\partial}{\partial t} + iP)U_{t} = R_{t}  \in  C^{\infty}((X\setminus\partial X)^{2})\cap \cL(L^{2}_{b}(X)).
 \end{equation}
 and for any boundary hypersurfaces $H$ (with normal bundle trivialized with a boundary defining function):
 $$ I_{H}(R_{t})=  (\frac{\partial}{\partial t} + iI_{H}(P))I_{H}(U_{t})  \in  C^{\infty}(H^{2}\times \RR)\cap \cL(L^{2}_{b}(H\times \RR)). $$
The  error term $R_{t}$ is $C^{0}$ on $G_{b}$ and there is no reason neither to expect that it is $C^{\infty}$ on $G_{b}$, nor that  it  extends continuously to $X^{2}_{b}$. 
\item This discussion is similar to the previous one for the groupoid $G_{\pi} \rightrightarrows X$ \cite{DLR} and its associated pseudodifferential calculus, where $X$ is a manifold with iterated fibred corners. In both cases, the regularity result that we reach for the error term $R$ is likely not optimal. This will be investigated, among other applications to singular spaces, in future works. 
\end{enumerate}
As far as we know, examples (3--5) above are new. 
 
\bibliographystyle{plain}  
\def\cprime{$'$}

\end{document}